\newtheorem{theorem}{Theorem}
\numberwithin{theorem}{subsection}
\newtheorem{lemma}[theorem]{Lemma}
\newtheorem{proposition}[theorem]{Proposition}
\newtheorem{conjecture}[theorem]{Conjecture}
\newtheorem{corollary}[theorem]{Corollary}
\newtheorem{question}[theorem]{Question}
\theoremstyle{definition}
\newtheorem{definition}[theorem]{Definition}
\newtheorem{example}[theorem]{Example}
\theoremstyle{remark}
\newtheorem*{remark}{Remark}
\newcommand{\kk}{\Bbbk}
\title{On the Decomposition of Tensor Products of Monomial Modules for Finite $2$-Groups}
\author{George Cao}
\address{
Montgomery High School\\
Skillman, NJ 08558\\
U.S.A.}
\email{georgecao88@gmail.com}
\author{Kent B. Vashaw}
\address{
Department of Mathematics\\
Massachusetts Institute of Technology\\
Cambridge, MA 02139\\
U.S.A.}
\email{kentv@mit.edu}
\date{}
\subjclass{
20C20
}
\keywords{modular representation theory, finite groups, tensor powers}
\begin{document}

\begin{abstract}
    Dave Benson conjectured in 2020 that if $G$ is a finite $2$-group and $V$ is an odd-dimensional indecomposable representation of $G$ over an algebraically closed field $\kk$ of characteristic $2$, then the only odd-dimensional indecomposable summand of $V \otimes V^*$ is the trivial representation $\kk$. This would imply that a tensor power of an odd-dimensional indecomposable representation of $G$ over $\kk$ has a unique odd-dimensional summand. Benson has further conjectured that, given such a representation $V$, the function sending a positive integer $n$ to the dimension of the unique odd-dimensional indecomposable summand of $V^{\otimes n}$ is quasi-polynomial. We examine this conjecture for monomial modules, a class of graded representations for the group $\mathbb{Z}/{2^r}\mathbb{Z} \times \mathbb{Z}/{2^s}\mathbb{Z}$ which correspond to skew Young diagrams. We prove the tensor powers conjecture for several modules, giving some of the first nontrivial cases where this conjecture has been verified, and we give conjectural quasi-polynomials for a broad range of monomial modules based on computational evidence.
    \end{abstract}

\maketitle

\section{Introduction}

It is well-known that the representation theory of finite groups bifurcates, depending on the characteristic of the field that the representations are taken over. When the field $\kk$ is characteristic 0, or more generally when the characteristic does not divide the order of the finite group $G$, then all representations of $G$ over $\kk$ are semisimple, and a full understanding of the tensor products of representations for $G$ can be obtained by finitely many computations. On the other hand, when the characteristic of $\kk$ divides the order of $G$, it is often no longer possible, in many cases, to compute all the indecomposable representations, and even very basic questions about the decompositions of tensor products of $G$-representations remain a mystery. Several of these open questions will be the primary focus of this paper. 

Let $\kk$ be an algebraically closed field of characteristic $p$. Based on patterns observed from a significant body of computational evidence, Dave Benson has proposed the following conjecture~\cite[Conjecture 1.1]{benson}: 

\begin{conjecture}[Benson]\label{conj:p2div4}
Let $p=2$ and $G$ a finite $2$-group. If $V$ is an odd-dimensional indecomposable representation of $\kk G$, then $V \otimes V^*$ is a direct sum of $\kk$ and indecomposable representations whose dimensions are divisible by $4$.
\end{conjecture}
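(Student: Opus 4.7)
The plan begins with the standard reduction. Under the natural isomorphism $V \otimes V^* \cong \End(V)$, the trace $\Trace: \End(V) \to \kk$ and the identity inclusion $\Id_V: \kk \hookrightarrow \End(V)$ compose to multiplication by $\dim V$ on $\kk$, a unit since $\dim V$ is odd and $\kk$ has characteristic $2$. This produces a canonical splitting $V \otimes V^* \cong \kk \oplus M$, where $M := \ker(\Trace)$ is the module of trace-zero endomorphisms. Conjecture~\ref{conj:p2div4} thus reduces to the claim that every indecomposable summand of $M$ has dimension divisible by $4$. A quick parity check is consistent: writing $\dim V = 2k+1$, one has $\dim M = 4k(k+1) \equiv 0 \pmod 8$, so the total dimension presents no obstruction.

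Next, I would attempt to reduce the problem to a tractable family of $2$-groups via restriction and Green correspondence. The base case $G = \mathbb{Z}/2^r$ should be amenable to direct verification using the known explicit formulas for tensor products of indecomposables over cyclic $2$-groups, by inspecting the resulting list of summand dimensions. For general $G$, given a non-projective indecomposable summand $W$ of $M$ with vertex $Q$, Green correspondence associates to $W$ an indecomposable $\kk N_G(Q)$-module with the same vertex; one would then aim to reduce inductively to vertex-source data in proper subgroups, and ultimately to elementary abelian subgroups, where vertex-source considerations become more combinatorial.

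The principal obstacle, and the reason Conjecture~\ref{conj:p2div4} has resisted attack, is that elementary abelian $2$-groups of rank at least $2$ have wild representation type, so no classification of indecomposables is available. A purely structural argument producing the factor of $4$ is therefore required, and none is apparent from elementary considerations alone. Possible avenues include exploiting the $S_2$-action on $V^{\otimes 2}$, self-duality of $V$ when present, or support-variety methods in the stable module category $\mathrm{stmod}(\kk G)$; however, extracting divisibility constraints on dimensions of indecomposable summands from such tools is subtle, since these invariants typically control the stable isomorphism class rather than individual Krull--Schmidt components. In practice, one is driven to verify the conjecture for explicit families by direct calculation, which appears to be the strategy pursued in the remainder of this paper via monomial modules over $\mathbb{Z}/2^r \times \mathbb{Z}/2^s$ and their skew-tableau combinatorics.
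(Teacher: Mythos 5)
The statement you were asked to prove is Benson's Conjecture~\ref{conj:p2div4}, and it is precisely that: an open conjecture. The paper contains no proof of it --- it only records the known facts (that $\kk$ occurs as a summand of $V \otimes V^*$, with multiplicity one, by the references cited there), presents computational evidence for monomial modules, and proves the tensor-powers consequence in a handful of explicit graded cases by direct decomposition. So there is no ``paper proof'' for your attempt to be measured against, and your proposal, by its own admission, is not a proof either.

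What you do establish is correct and is exactly the standard known reduction: the trace and unit maps on $\End(V) \cong V \otimes V^*$ are $G$-equivariant, their composite is multiplication by $\dim V$, which is a unit in characteristic $2$ when $\dim V$ is odd, so $V \otimes V^* \cong \kk \oplus M$ with $M$ the trace-zero endomorphisms, and $\dim M = 4k(k+1) \equiv 0 \pmod 8$. Beyond that, your suggested reductions are speculative and, as you note, do not close: Green correspondence controls vertices and stable isomorphism classes, not the dimensions of individual Krull--Schmidt summands of a tensor product, and restriction to subgroups does not interact well with the summand structure of $V \otimes V^*$ for the ambient group; the wildness of elementary abelian $2$-groups of rank $\ge 2$ blocks any classification-based argument. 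Your candid conclusion --- that one is driven to verify the statement in explicit families --- is in fact the stance the paper takes, so the ``gap'' in your proposal is not an oversight on your part but the open problem itself. The only substantive caution I would add is that you should not present the cyclic base case or the Green-correspondence reduction as if they fed into an induction that could conceivably terminate; as written, no inductive mechanism transfers the divisibility-by-$4$ claim from local subgroup data back to $G$, so those paragraphs should be framed purely as context rather than as steps of an argument.
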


The weaker version of this conjecture, in which the indecomposable representations other than $\kk$ have even dimension, is also unproven. It is well-known that a summand isomorphic to $\kk$ always exists in the decomposition for $V \otimes V^*$~\cite[Exercise 7.4]{localrep}, and indeed has multiplicity 1~\cite[Theorem 2.1]{benson-carlson}. 

If true, Conjecture~\ref{conj:p2div4} (or its weaker analogue) would also imply that any tensor power $V^{\otimes n}$ of $V$ has a unique odd-dimensional indecomposable summand. Denote this summand by $V_n$; Benson introduced the function $P_V: n \mapsto \dim(V_n)$ in~\cite[Section 1.4]{bensonbook}. He made the following conjecture:
\begin{conjecture}[Benson]\label{conj:tensorpowers}
The function $P_V(n)$ is quasi-polynomial, that is, there exist polynomials\\$f_0, f_1,\dots,$ $f_{m-1}$ such that $P_V(n)=f_i(n)$ if $n \equiv i\pmod{m}$. 
\end{conjecture}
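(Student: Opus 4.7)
The plan is to reduce Conjecture~\ref{conj:tensorpowers} to a finite-dimensional linear algebra problem and then invoke standard results on recurrence sequences. Assuming Conjecture~\ref{conj:p2div4}, the unique odd-dimensional summand $V_n$ of $V^{\otimes n}$ is well-defined, and the assignment $n \mapsto V_n$ behaves like a ``shadow'' on the Green ring: I would try to model $P_V(n)$ as a matrix coefficient of $T_V^n$, where $T_V$ is a linear map induced by tensoring with $V$ on an appropriate finite-dimensional space.

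First, I would identify a finite set $\mathcal{S}$ of odd-dimensional indecomposable $\kk G$-modules, containing $\kk$, such that for each $W \in \mathcal{S}$ all odd-dimensional summands of $V \otimes W$ again lie in $\mathcal{S}$ (with the even-dimensional summands simply discarded by Conjecture~\ref{conj:p2div4}). This produces a non-negative integer matrix $M$ indexed by $\mathcal{S}$. The multiplicities of the odd-dimensional indecomposables in $V^{\otimes n}$ are then recorded in the vector $M^n e_{\kk}$, and $P_V(n) = d \cdot M^n e_{\kk}$, where $d$ is the row-vector of dimensions. By Cayley--Hamilton, $P_V(n)$ satisfies a linear recurrence with constant integer coefficients.

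Second, I would argue that the eigenvalues of $M$ are all roots of unity, which would imply that $P_V(n)$ is eventually quasi-polynomial. One natural route is to use the dualizability of $V$ and the fact that the reduced Green ring of a $p$-group has no non-trivial units of infinite order; alternatively, one could attempt a direct spectral argument using the combinatorial structure of $M$ in the settings where $\mathcal{S}$ can be described explicitly (for example, indexed by skew Young diagrams in the monomial case).

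The main obstacle is the first step: for a general finite $2$-group there is no \emph{a priori} reason that only finitely many odd-dimensional indecomposables should appear as summands of tensor powers of $V$. Even for the tame group $\mathbb{Z}/2\mathbb{Z} \times \mathbb{Z}/2\mathbb{Z}$ in characteristic $2$, infinitely many non-isomorphic odd-dimensional indecomposables exist, so one cannot simply take $\mathcal{S}$ to be the set of all of them. Proving finiteness of the relevant $\mathcal{S}$ appears to require delicate structural information about $V$, and is presumably the reason the authors restrict attention to the combinatorially rich but controlled class of monomial modules over $\mathbb{Z}/2^r\mathbb{Z} \times \mathbb{Z}/2^s\mathbb{Z}$, where the skew-Young-diagram description gives a handle on which summands can occur.
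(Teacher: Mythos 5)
The statement you are addressing is a conjecture (due to Benson); the paper does not prove it in general, but only verifies it for particular monomial modules by explicitly constructing the odd summand $V_n$ and decomposing $V_{n-1}\otimes V$ by hand (Sections on the $(4,1)$, $(3,1,1)$, $(4,2)/(1)$ and $(6,1)$ modules), together with a syzygy argument for the staircase modules. Your proposal is likewise not a proof, and you honestly flag its missing step; but the gap is worse than ``hard to prove'': the first step is provably impossible in exactly the cases of interest. If there were a finite set $\mathcal{S}$ of odd-dimensional indecomposables closed under taking the odd part of $-\otimes V$ and containing $\kk$, then every $V_n$ would lie in $\mathcal{S}$, so $P_V(n)=\dim V_n$ would be bounded by $\max_{W\in\mathcal{S}}\dim W$. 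But already for the $m$-staircase modules over $\mathbb{Z}/2\mathbb{Z}\times\mathbb{Z}/2\mathbb{Z}$ one has $V_n$ equal to the $(mn-n+1)$-staircase, so the $V_n$ are pairwise non-isomorphic and $P_V(n)=(2m-2)n+1$ is unbounded; similarly $P_V(n)=4n+1$ for the $(4,1)$ module and $2n^2+4n+1$ for $(6,1)$. So no finite transfer matrix $M$ can exist, and even granting one, your second step would only yield that $P_V$ takes finitely many values (eventual periodicity), contradicting the known linear and quadratic growth. The claim that the relevant eigenvalues are roots of unity because the reduced Green ring of a $2$-group has no units of infinite order is also unsubstantiated, but it is moot given the failure of the finiteness step.

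What a correct argument of this general flavor would need is not a finite set of indecomposables but an infinite family organized into finitely many ``shapes'' depending on a parameter, together with a proof that tensoring with $V$ maps the family to itself in a way that shifts the parameter; this is precisely what the paper's inductive lemmas do (e.g.\ Lemma~\ref{lem:41partition} and its analogues), where $V_{2k}$, $V_{2k+1}$ are described by explicit graded diagrams and $V_n\otimes V$ is decomposed into $V_{n+1}$ plus even-dimensional summands from finitely many parameterized families. Note also Lemma~\ref{omega-alg}: the $(4,2)/(1)$ module is not Omega-algebraic in the graded category, so one cannot even confine the summands of tensor powers to syzygies of finitely many modules; this is further structural evidence that any finite ``state space'' approach, whether via indecomposables or via syzygy classes, cannot work without a parameterized refinement.
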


This conjecture is closely related to work of Benson and Symonds~\cite{bensontensorpower}, where a similar conjecture was made for the dimension of the largest non-projective summand of a tensor power. In the case where $V$ is not necessarily taken to be indecomposable, questions related to the number of indecomposable summands of $V^{\otimes n}$ of dimension coprime to $p$ have been recently approached via the theory of symmetric tensor categories, see \cite[Section 8.3]{CEO} and \cite{COT}. Benson's conjectures may be formulated in a tensor-categorical context via the notion of semisimplification of a tensor category~\cite{EO1}, although such a formulation does not play a role in the present paper.

Benson's tensor powers conjecture (Conjecture~\ref{conj:tensorpowers}) is unknown even in the simplest nontrivial examples~\cite[Remark 5.8]{EtingofKannan}. We contribute by analyzing the tensor powers conjecture for a class of representations called monomial representations, which correspond to skew Young diagrams, for a certain group scheme $\alpha(r,s)$, whose group algebra is isomorphic (as an algebra, although not as a coalgebra) to the group algebra of $\mathbb{Z}/{2^r}\mathbb{Z} \times \mathbb{Z}/{2^s}\mathbb{Z}$ over $\kk$. In particular, we prove the following:

\begin{theorem}[See Corollary~\ref{4-1-poly}, Corollary~\ref{3-1-1-poly}, Corollary~\ref{4-2--1-poly}, and Corollary~\ref{6-1-poly}]
Let $\kk$ be an algebraically closed field of characteristic $2$.
\begin{enumerate}
\item Let $V$ be the monomial representation for $\alpha(1,2)$ corresponding to the Young diagram
\[
\begin{tikzpicture}[scale=0.34]
    \draw (15,-2) -- (17,-2);
    \draw (15,-1) -- (17,-1);
    \draw (15,0) -- (16,0);
    \draw (15,1) -- (16,1);
    \draw (15,2) -- (16,2);

    \draw (15,-2) -- (15,2);
    \draw (16,-2) -- (16,2);
    \draw (17,-2) -- (17,-1);
\end{tikzpicture} .
\]
Then $V^{\otimes n}$ has a unique odd-dimensional indecomposable summand, and the dimension is given by $4n+1.$
\item Let $V$ be the monomial representation for $\alpha(2,2)$ corresponding to the Young diagram
\[
\begin{tikzpicture}[scale=0.34]
    \draw (0,0) -- (3,0);
    \draw (0,1) -- (3,1);
    \draw (0,2) -- (1,2);
    \draw (0,3) -- (1,3);
    
    \draw (0,0) -- (0,3);
    \draw (1,0) -- (1,3);
    \draw (2,0) -- (2,1);
    \draw (3,0) -- (3,1);
\end{tikzpicture}.
\]
Then $V^{\otimes n}$ has a unique odd-dimensional indecomposable summand, and the dimension is given by $10n-5$ for odd $n$ and $6n+1$ for even $n$.
\item Let $V$ be the monomial representation for $\alpha(1,2)$ corresponding to the skew Young diagram
\[
\begin{tikzpicture}[scale=0.34]
    \draw (1,0) -- (2,0);
    \draw (0,1) -- (2,1);
    \draw (0,2) -- (2,2);
    \draw (0,3) -- (1,3);
    \draw (0,4) -- (1,4);
    
    \draw (0,1) -- (0,4);
    \draw (1,0) -- (1,4);
    \draw (2,0) -- (2,2);
\end{tikzpicture}.
\]
Then $V^{\otimes n}$ has a unique odd-dimensional indecomposable summand, and the dimension is given by $6n-1$ for odd $n$ and $6n+1$ for even $n$.
\item Let $V$ be the monomial representation for $\alpha(1,3)$ corresponding to the skew Young diagram
\[
\begin{tikzpicture}[scale=0.34]
    \draw (15,-2) -- (17,-2);
    \draw (15,-1) -- (17,-1);
    \draw (15,0) -- (16,0);
    \draw (15,1) -- (16,1);
    \draw (15,2) -- (16,2);
    \draw (15,3) -- (16,3);
    \draw (15,4) -- (16,4);

    \draw (15,-2) -- (15,4);
    \draw (16,-2) -- (16,4);
    \draw (17,-2) -- (17,-1);
\end{tikzpicture}.
\]
Then $V^{\otimes n}$ has a unique odd-dimensional indecomposable summand, and the dimension is given by $2n^2+4n+1$.
\end{enumerate}
\end{theorem}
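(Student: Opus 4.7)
The strategy in each of the four cases is induction on $n$, the base case $n=1$ being trivial since $V$ itself is indecomposable and odd-dimensional of the stated dimension. For the inductive step I would use two ingredients. First, a combinatorial decomposition theory for tensor products of monomial modules, developed in earlier sections of the paper: since $\kk\alpha(r,s) \cong \kk[x,y]/(x^{2^r}, y^{2^s})$ is a Hopf algebra with primitive generators, $x$ and $y$ act on any tensor product by the Leibniz rule $x(v \otimes w) = xv \otimes w + v \otimes xw$, which, together with the monomial basis of a skew Young diagram, lets one describe $V \otimes W$ explicitly as a bigraded $\kk\alpha(r,s)$-module. Second, for each particular $V$ in the theorem, an explicit ansatz: a monomial module $M_n$ depending on $n$, with $M_1 = V$ and $\dim M_n$ equal to the claimed quasi-polynomial.

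With the ansatz fixed, the key technical step is to verify the tensor-product decomposition
\[
V \otimes M_n \;\cong\; M_{n+1} \oplus E_n,
\]
where $E_n$ is a direct sum of even-dimensional (in practice largely projective) indecomposables. Granting this, a straightforward induction shows $V^{\otimes n} \cong M_n \oplus (\text{even-dimensional summands})$, which simultaneously confirms the uniqueness of the odd-dimensional summand and identifies $V_n = M_n$, yielding the dimension formula.

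The guess for $M_n$ is extracted from small-$n$ computer data: in parts (1) and (4) the family is parametrized by a single integer, while in (2) and (3) the shape of $M_n$ depends on the parity of $n$, which is what forces the period-$2$ quasi-polynomial. The verification of $V \otimes M_n \cong M_{n+1} \oplus E_n$ proceeds by writing $V \otimes M_n$ in the monomial basis, exhibiting a submodule isomorphic to $M_{n+1}$ (or, dually, a quotient), and then splitting off the complement as a sum of projectives and smaller even-dimensional pieces using the decomposition theory above. The hardest case is expected to be (4), whose quasi-polynomial is genuinely quadratic: here the ansatz $M_n$ must be a monomial module whose skew Young diagram grows in two directions at once, and the combinatorial book-keeping for the projective summands that cancel out in $V \otimes M_n$ is correspondingly heavier. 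The principal obstacle throughout is ruling out accidental further odd-dimensional summands of $V \otimes M_n$, which is precisely the content of Benson's conjecture (Conjecture~\ref{conj:p2div4}) for the modules at hand; this is what makes even the qualitative ``uniqueness'' half of each statement, let alone the quasi-polynomial formula, nontrivial.
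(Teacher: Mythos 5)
Your plan is essentially the paper's own proof: guess the odd-dimensional summand $V_n$ explicitly from small-$n$ computations (with parity-dependent shapes exactly as you predict), verify by hand that $V\otimes V_n \cong V_{n+1}\oplus(\text{even-dimensional indecomposables})$ by exhibiting explicit graded subrepresentations and checking they span, and induct. Two small caveats: your ``straightforward induction'' also silently uses the fact that an even-dimensional module tensored with anything decomposes into even-dimensional indecomposables (\cite[Corollary 2.2]{benson}, which the paper invokes and which is what makes the uniqueness claim follow without assuming Conjecture~\ref{conj:p2div4}), and the actual content of the theorem is the case-by-case explicit decompositions themselves (massive for the $(3,1,1)$ and $(6,1)$ cases), which your proposal defers rather than supplies.
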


We accomplish this by explicitly constructing the unique odd-dimensional indecomposable summand $V_n$ of $V^{\otimes n}$, by decomposing the tensor product $V \otimes V_{n-1}$. It is enough to consider this tensor product, as opposed to the full $V^{\otimes n}$, since the tensor product of an even-dimensional representation with any other representation breaks down into even-dimensional summands~\cite[Corollary 2.2]{benson}. We also provide a list of quasi-polynonomials which we conjecture, based on computational evidence, to give the function $P_V(n)$ for a number of monomial representations $V$. This list includes quasi-polynomials of period 1 and 2, and the polynomials themselves are either linear or quadratic.

The paper is structured as follows. In Section~\ref{sec:monomial}, we give background on the group schemes $\alpha(r,s)$ and the monomial representations which are the main object of study in the rest of the paper. In Section~\ref{sec:tensorpower}, we prove Benson's tensor powers conjecture (Conjecture~\ref{conj:tensorpowers}) for several monomial representations. There are no computations of this sort in the literature, and they give the first known examples where the tensor powers conjecture is verified in a way which does not follow from general homological algebra. In the process, we give an example which answers an analogue of a question asked by Benson and Symonds in~\cite{bensontensorpower}. Lastly, in Section~\ref{sec:otherconjs}, we introduce several additional questions and conjectures based on computational evidence.

\section{The group scheme $\alpha(r,s)$ and monomial representations}\label{sec:monomial}

Throughout this paper, unless otherwise stated, $\kk$ denotes an algebraically closed field of characteristic $2$, and $\otimes$ will mean $\otimes_{\kk}$. 

\subsection{The group scheme $\alpha(r,s)$}

The tensor product and duality operations for group representations (see~\cite[Section 4.4]{introrep}) are given by a canonical structure on the group algebra $\kk G$ of a cocommutative Hopf algebra, i.e.~its category of representations is equivalent to the representations of an affine group scheme~\cite{Montgomery, waterhouse}. In this section, we define an alternative cocommutative Hopf algebra structure on the group algebra $\kk G$, which is used to define all tensor products of representations over the rest of the paper. The benefit of using this alternate group scheme structure is that the tensor product of the representations we consider will respect a certain natural grading of these representations. However, based on computational evidence, we do conjecture that for the representations in this paper, the use of this alternative group scheme structure does not effect the dimensions of the indecomposable summands of the tensor powers.

Let $G:=\mathbb{Z}/2^r \mathbb{Z} \times \mathbb{Z}/2^s \mathbb{Z}$. Then $G$ has a minimal generating set $\{g,h\}$, where $g$ and $h$ commute and $g^{2^r}=1=h^{2^s}$. The group algebra $\kk G$ is the formal span of elements of $G$; by abuse of notation, we let $g$ and $h$ again refer to the corresponding vectors in $\kk G$. Set $x:=g+1$ and $y:=h+1$. Both $x$ and $y$ are nilpotent, since
\[
x^{2^r}=(g+1)^{2^r}=g^{2^r}+1^{2^r} = 1+1=0,
\]
and similarly for $y$, using the fact that $\kk$ is characteristic 2. Since $g$ and $h$ generate $\kk G$ as an algebra, then to define a (unital) algebra map from $\kk G$, it suffices to specify its value on $x$ and $y$.

\begin{definition}\label{def:comultiplication}
Define $\kk\alpha(r,s)$ as the Hopf algebra which is isomorphic to $\kk G$ as algebras, but where we use the \textit{comultiplication} structure $\alpha(r,s) \to \alpha(r,s) \otimes \alpha(r,s)$ defined by 
\[
x \mapsto x \otimes 1 + 1 \otimes x, \; \; y \mapsto y \otimes 1 + 1 \otimes y,
\]
and \textit{antipode} which is the identity map on both $x$ and $y$. 
\end{definition}

Since $\kk \alpha(r,s)$ is still cocommutative, its representations correspond to representations of an affine group scheme, which we denote $\alpha(r,s)$. This corresponds to a straightforward generalization of the affine group scheme $\alpha_{p}$ given in~\cite[Section 1.1]{waterhouse} and is a special case of a restricted enveloping algebra of a Lie algebra, as in~\cite[Section I.7.10]{Jantzen}. 

For the remainder of the paper, unless stated otherwise, the tensor product and dual of representations will be defined by the Hopf algebra structure given in Definition~\ref{def:comultiplication}. 

\begin{conjecture}\label{conj:newtensdiv4}
The analogues of Conjecture \ref{conj:p2div4} and Conjecture~\ref{conj:tensorpowers} hold for the group scheme $\alpha(r,s)$.
\end{conjecture}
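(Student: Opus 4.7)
The plan is to attack both halves of the conjecture by reducing to a combinatorial analysis on monomial modules and then proceeding by induction on the exponent $n$. For the analogue of Conjecture~\ref{conj:p2div4}, I would start with an odd-dimensional indecomposable $\alpha(r,s)$-module $V$ and compute $V \otimes V^{*}$ using the comultiplication of Definition~\ref{def:comultiplication}; since the antipode is the identity on $x$ and $y$, duality is especially tractable on monomial modules, and $V^{*}$ is again (essentially) monomial. The first concrete sub-goal is to show that, for monomial $V$ corresponding to a skew Young diagram $\lambda/\mu$, the tensor product $V \otimes V^{*}$ admits a decomposition indexed by combinatorial data attached to pairs of boxes in $\lambda/\mu$, with the canonical $\kk$-summand arising from the ``diagonal'' pairing and all other summands grouping into $\alpha(r,s)$-orbits of dimension divisible by $4$.

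For the analogue of Conjecture~\ref{conj:tensorpowers}, I would assume the first half and induct on $n$, using that $V_n$ is the unique odd-dimensional indecomposable summand of $V \otimes V_{n-1}$, since even-dimensional summands of $V_{n-1}$ contribute only even-dimensional things by~\cite[Corollary 2.2]{benson}. The goal is to show that the isomorphism class of $V_n$ is recorded by a tuple of combinatorial parameters (say, row and column lengths of an associated skew shape) whose update rule $n \mapsto n+1$ is piecewise-linear and depends only on $n \bmod m$ for a period $m$ that can be read off from $(r,s)$ and from the shape of $V$. Quasi-polynomiality of $P_V(n)$ would then follow by solving the resulting linear recurrences in each residue class, and the four cases treated in the main theorem would serve as base cases validating the proposed update rule.

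The main obstacle is making this induction global. The representation type of $\kk\alpha(r,s)$ is wild for most $(r,s)$, and there is at present no uniform description of the indecomposable summands of a tensor product of two monomial modules, so the step $V_{n-1} \mapsto V_n$ cannot be read off from any known tensor-product rule; one must genuinely construct $V_n$ inside $V \otimes V_{n-1}$ case by case. Even the four cases of the main theorem already exhibit different quasi-polynomial behaviour (periods $1$ or $2$, and linear or quadratic polynomials), which suggests that any uniform update rule will have several regimes whose combinatorics is nontrivial to control across all skew shapes and all $(r,s)$. I would therefore expect a complete proof to require a substantial structural input beyond the case-by-case strategy of the main theorem — for instance a categorification of $P_V(n)$ via semisimplification of $\Rep \alpha(r,s)$ in the sense of~\cite{EO1}, from which quasi-polynomial growth might be extracted as an intrinsic consequence of the finiteness of the resulting semisimple tensor category.
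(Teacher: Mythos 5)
The statement you are asked about is a \emph{conjecture}, and the paper does not prove it: it is left open, and the paper only (i) verifies the tensor-powers half for a handful of specific monomial representations by explicitly constructing the decomposition of $V_{n-1}\otimes V$ with concrete graded bases (the $(4,1)$, $(3,1,1)$, $(4,2)/(1)$, $(6,1)$ and staircase cases), and (ii) supplies computational evidence elsewhere; moreover, in Section~\ref{sec:tensorpower} the authors explicitly \emph{assume} the analogue of Conjecture~\ref{conj:p2div4} rather than prove it. Your proposal is therefore not comparable to a proof in the paper, and, more importantly, it is not a proof at all: it is a program whose two key sub-goals are asserted but never established. You do not exhibit the claimed decomposition of $V\otimes V^{*}$ indexed by pairs of boxes with non-trivial summands of dimension divisible by $4$, nor the piecewise-linear, eventually periodic update rule for the parameters of $V_n$; and you yourself concede the central obstruction (wild representation type of $\kk\alpha(r,s)$, absence of any tensor-product rule for indecomposables). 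Solving linear recurrences ``in each residue class'' presupposes exactly the structure that is missing.

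There is also a scope gap in the first half of your plan: the analogue of Conjecture~\ref{conj:p2div4} is a statement about \emph{all} odd-dimensional indecomposable $\alpha(r,s)$-modules, whereas your combinatorial analysis is confined to monomial modules, which form a very thin slice of the (wild) module category. Even a complete box-pairing analysis of $V\otimes V^{*}$ for monomial $V$ would not address the conjecture as stated. Finally, note that in the cases the paper does handle, the quasi-polynomials arise not from a uniform update rule but from bespoke, case-by-case explicit decompositions of $V_{n-1}\otimes V$ proved by induction; your suggestion that a semisimplification argument in the sense of~\cite{EO1} might yield quasi-polynomiality is a reasonable speculation (the paper itself mentions this framework), but it is not carried out here, so the gap between your outline and an actual proof of the conjecture remains essentially the full content of the conjecture.
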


Lastly, we define a $\mathbb{Z}^2$-grading on $\kk \alpha(r,s)$. 

\begin{definition}\label{grading}
Let $x$ and $y$ be the elements of $\kk \alpha(r,s)$ as above. Define $x$ and $y$ to have degrees $(1,0)$ and $(0,1) \in \mathbb{Z}^2$, respectively. 
\end{definition}

It is straightforward to observe that not only does this give $\kk \alpha(r,s)$ the structure of a graded algebra, but also the structure of a graded coalgebra, and in fact that of a graded Hopf algebra. Indeed, this is one of the primary motivations for using the group scheme $\alpha(r,s)$ as opposed to $\kk G$ for $G= \mathbb{Z}/2^{r}\mathbb{Z} \times \mathbb{Z}/2^{s} \mathbb{Z}$. Recall that if $H$ is a graded Hopf algebra and $V$ and $W$ are graded modules for $H$, then $V^*$ and $V \otimes W$ are also graded modules for $H$.

\subsection{Monomial representations}\label{subsec:monomialdefandbasicres}

Fix two positive integers $r$ and $s$, and denote $G:=\mathbb{Z}/2^r\mathbb{Z} \times \mathbb{Z}/2^s\mathbb{Z}$. We denote the elements $x$ and $y$ of $\kk G$ (or, what is the same, $\kk\alpha(r,s)$) as in the previous section. For the remainder of the paper, we concentrate on Conjecture~\ref{conj:newtensdiv4} for a specific class of representations of $G$, which we define now.

\begin{definition}
Choose a partition $\lambda = (\lambda_1, \dots, \lambda_n)$ such that $\lambda_1 \ge \cdots \ge \lambda_n > 0$ and a sub-partition $\mu = (\mu_1, \dots, \mu_m)$ such that $\mu_1 \ge \cdots \ge \mu_m > 0$ with $m < n$ and $\lambda_i \ge \mu_i$, for $i=1,..., m$. The pair $(\lambda, \mu)$ will be collectively referred to as a \textit{skew partition}, and will typically be denoted $\lambda/\mu$. The \textit{monomial representation} of $G$ corresponding to $\lambda/\mu$ has basis vectors $v_{i,j}$ such that $1 \le i \le n$ and $\mu_i < j \le \lambda_i$, where we set $\mu_q := 0$ for $q > m$. The action of $x$ sends $v_{i,j}$ to $v_{i+1,j}$ and the action of $y$ sends $v_{i,j}$ to $v_{i,j+1}$, if such basis elements exist, and otherwise sends them to 0. 
\end{definition}

Note that the actions of $x$ and $y$ commute. In order for the monomial representation associated to $\lambda/\mu$ to exist (i.e., to actually be a $G$-representation), we require that for any $i$, there can only be at most $2^s$ values of $j$ such that $v_{i,j}$ is a basis vector, and similarly with the roles of $i$ and $j$ reversed.

Taking $\kk \alpha(r,s)$ to be a graded Hopf algebra as in Definition \ref{grading}, it is clear that a monomial representation for $\kk\alpha(r,s)$ is a graded representation, where $v_{i,j}$ is taken to be in degree $(i,j)$. 

\begin{definition}
Let $V$ be a monomial representation of $G$. A \textit{monomial diagram} is the diagram resulting from drawing the grid box $(i,j)$ if $v_{i,j}$ is a basis vector. Each grid box is called a \textit{cell}. 
\end{definition}

In other words, the monomial diagram for the monomial representation corresponding to the skew partition $\lambda/\mu$ is the same thing as the skew Young diagram of $\lambda/\mu$. 

\begin{definition}
Let $D$ be the monomial diagram for the monomial representation $V$. Then $D$ is \textit{connected} if, for any choice of cells $B_1, B_2$, there exists a sequence of cells $B_1 = C_1, C_2, \dots, C_n = B_2$ for some $n$ such that $C_{i}$ is edge-adjacent to $C_{i+1}$.
\end{definition}

\begin{example}\label{ex:monomialdiagram}
If $G$ is $\mathbb{Z}/4\mathbb{Z} \times \mathbb{Z}/2\mathbb{Z}$, then
the monomial representation corresponding to skew partition\\ $(5,4,2,2,1,1)/(3,2)$ would have a basis vectors
$$v_{1,4},v_{1,5},v_{2,3},v_{2,4},v_{3,1},v_{3,2},v_{4,1},v_{4,2},v_{5,1},v_{6,1}.$$
Its monomial diagram is shown in Figure~\ref{fig:monomialrepex}. If $G$ were $\mathbb{Z}/2\mathbb{Z} \times \mathbb{Z}/4\mathbb{Z}$, then the monomial diagram for the representation corresponding to $(4,4,2,1)/(3,1)$ is shown in Figure~\ref{fig:monomialrepindecex}.
\vspace{-5mm}
\begin{figure}[H]
    \centering
    \caption{Examples of monomial diagrams.}
    \begin{subfigure}[t]{0.45\textwidth}
        \centering
        \begin{tikzpicture}[scale=0.34]
            \draw (0,0) -- (6,0);
            \draw (0,1) -- (6,1);
            \draw (0,2) -- (4,2);
            \draw (0,3) -- (2,3);
            \draw (0,4) -- (2,4);
            \draw (0,5) -- (1,5);
            
            \draw (0,0) -- (0,5);
            \draw (1,0) -- (1,5);
            \draw (2,0) -- (2,4);
            \draw (3,0) -- (3,2);
            \draw (4,0) -- (4,2);
            \draw (5,0) -- (5,1);
            \draw (6,0) -- (6,1);
            
            \fill[black] (0,0)--(1,0)--(1,3)--(0,3);
            \fill[black] (1,0)--(2,0)--(2,2)--(1,2);
        \end{tikzpicture}
        \caption{}
        \label{fig:monomialrepex}
    \end{subfigure}
    \begin{subfigure}[t]{0.45\textwidth}
        \centering
        \begin{tikzpicture}[scale=0.34]
            \draw (2,0) -- (4,0);
            \draw (1,1) -- (4,1);
            \draw (1,2) -- (3,2);
            \draw (0,3) -- (2,3);
            \draw (0,4) -- (2,4);
            \draw (0,3) -- (0,4);
            \draw (1,1) -- (1,4);
            \draw (2,0) -- (2,4);
            \draw (3,0) -- (3,2);
            \draw (4,0) -- (4,1);
            
            \fill[black] (0,0)--(2,0)--(2,1)--(1,1)--(1,3)--(0,3);
        \end{tikzpicture}
        \caption{}
        \label{fig:monomialrepindecex}
    \end{subfigure}
\end{figure}
\vspace{-5mm}
\end{example}

One may read the action of $G$ on the monomial representation $V$ directly from the monomial diagram for $V$. There is a basis of $V$ corresponding to the cells of the diagram. The action of $x$, in the diagram, is moving to the right by one cell, if the cell exists, and $0$ otherwise. The action of $y$, in the diagram, is moving up by one cell, if the cell exists, and $0$ otherwise.

A monomial representation is indecomposable if and only if its monomial diagram is connected. In Example~\ref{ex:monomialdiagram}, the representation given by Figure~\ref{fig:monomialrepex} is not indecomposable, since there are two disconnected parts: the four white boxes in the first two columns are disconnected from the 6 white boxes in columns three to six. This means that this representation can be written as a direct sum of a representation of dimension $4$ and a representation of dimension $6$. On the other hand, the monomial representations given by Figure~\ref{fig:monomialrepindecex} is indecomposable.

The following lemmas establish that the dimensions of the indecomposable summands of $V \otimes V^*$ and $V^{\otimes n}$ are not dependent on orientation of the monomial diagram for $V$, but only the shape.

\begin{lemma}\label{lem:monomialflipdiagonal}
Let $V$ be a monomial representation. Let $W$ be the monomial representation that results from reflecting its monomial diagram about the line from bottom left to top right. Then, the dimensions of the indecomposable summands of $V \otimes V^*$ are the same as the dimensions of the indecomposable summands of $W \otimes W^*$ (up to permutation), and the dimensions of the indecomposable summands of $V^{\otimes n}$ are the same as the dimensions of the indecomposable summands of $W^{\otimes n}$ (up to permutation).
\end{lemma}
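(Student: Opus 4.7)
The plan is to exhibit reflection across the anti-diagonal as a pullback along a Hopf algebra isomorphism that swaps the two generators $x$ and $y$, and then to invoke general facts about such pullbacks. First I would introduce the algebra map $\sigma : \kk\alpha(s,r) \to \kk\alpha(r,s)$ defined by $x \mapsto y$ and $y \mapsto x$. This is well defined because the nilpotency indices match across the swap: the element $x$ in the source satisfies $x^{2^s}=0$, as does $y$ in the target, and symmetrically for the other generator. A glance at Definition~\ref{def:comultiplication} shows that the comultiplication, counit, and antipode all treat the two generators symmetrically — both are primitive and both are fixed by the antipode — so $\sigma$ is in fact a Hopf algebra isomorphism.

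Next I would identify $W$ with the pullback $\sigma^{*}V$, regarded as a $\kk\alpha(s,r)$-module. The assignment $w_{j,i} := v_{i,j}$ is a bijection between the prescribed bases, noting that the row/column bounds required for $V$ to be a monomial representation of $\alpha(r,s)$ become exactly the corresponding bounds for a monomial representation of $\alpha(s,r)$ after reflection (since rows and columns swap). A direct computation using the rules ``$x$ moves right'' and ``$y$ moves up'' in both diagrams then confirms that $x \cdot w_{j,i}$ corresponds to $y \cdot v_{i,j}$ and $y \cdot w_{j,i}$ to $x \cdot v_{i,j}$, which matches the definition of $\sigma$ exactly.

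Finally, I would invoke the general principle that pullback along a Hopf algebra isomorphism is an equivalence of symmetric monoidal categories with duals: it sends tensor products to tensor products, duals to duals, direct sums to direct sums, and indecomposable modules to indecomposable modules, all while preserving $\kk$-dimensions. Applying this equivalence to $V \otimes V^{*}$ and to $V^{\otimes n}$ and then reading off the dimensions of the indecomposable summands yields both statements of the lemma. The only point requiring any real attention is the verification that the Hopf algebra structure on $\kk\alpha(r,s)$ is genuinely symmetric in $x$ and $y$; this is immediate from Definition~\ref{def:comultiplication}, so there is no substantive obstacle in the argument.
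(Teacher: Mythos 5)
Your proof is correct, but it takes a more direct route than the paper. You build the swap map $\sigma\colon \kk\alpha(s,r)\to\kk\alpha(r,s)$, $x\mapsto y$, $y\mapsto x$, check it is a Hopf isomorphism (the nilpotency exponents and the primitivity/antipode conditions do match, as you verify), identify the reflected module $W$ with the pullback $\sigma^{*}V$ via $w_{j,i}:=v_{i,j}$, and then use that restriction along a Hopf isomorphism is a monoidal equivalence preserving duals, direct sums, indecomposability, and dimensions. The paper instead argues in two steps: it first observes that for $r\le r'$, $s\le s'$ the algebra $\kk\alpha(r,s)$ is a Hopf quotient of $\kk\alpha(r',s')$, so inflating $V$ to $q(V)$ over $\kk\alpha(r',s')$ does not change the dimensions of the indecomposable summands of $V\otimes V^{*}$ or $V^{\otimes n}$; it then takes $r'=s'=\max(r,s)$ and applies the $x\leftrightarrow y$ isomorphism at that level, where $q(V)$ corresponds to $q(W)$. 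Your version is shorter because the swap isomorphism $\kk\alpha(r,s)\cong\kk\alpha(s,r)$ exists without enlarging the parameters, and it cleanly proves the statement with $W$ regarded as a module over $\alpha(s,r)$, which is its natural home after reflection. What the paper's inflation step buys in addition is the invariance of the summand dimensions under enlarging $(r,s)$, which lets one compare $V$ and $W$ as modules over a single common group scheme $\alpha(r',r')$ — a point worth keeping in mind if the reflected diagram is to be considered over some other $\alpha(r',s')$ than $\alpha(s,r)$ — but for the lemma as stated your argument suffices.
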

\begin{proof}

First, note that if $r \leq r'$ and $s \leq s'$, we have $\kk \alpha(r,s)$ is a Hopf quotient of $\kk \alpha(r',s')$. If $V$ is a monomial representation for $\kk \alpha(r,s)$, then it is naturally also a monomial representation for $\kk \alpha(r',s')$, via this quotient map; denote this corresponding representation by $q(V)$. Since this quotient map is a map of Hopf algebras, $q(V^{\otimes n}) \cong q(V)^{\otimes n}$ and $q(V \otimes V^*) \cong q(V) \otimes q(V)^*$ and the indecomposable summands of $V^{\otimes n}$, or the indecomposable summands of $V \otimes V^*$, will have the same dimensions as the modules $q(V)^{\otimes n}$ and $q(V) \otimes q(V)^*$, for $\kk \alpha(r',s')$. 

Now, if $V$ is a monomial represention for $\alpha(r,s)$, choose $r'=s'= \max(r,s)$. Under the isomorphism $\kk\alpha(r',s') \cong \kk\alpha(s',r')$ which interchanges $x$ and $y$, the representation $q(V)$ corresponds to the representation $q(W)$, and so the dimensions of the indecomposable summands of $q(V)^{\otimes n}$ and $q(V) \otimes q(V)^*$ are equal to the dimensions of the indecomposable summands of $q(W)^{\otimes n}$ and $q(W)\otimes q(W)^*$. By the previous paragraph, these are equal, respectively, to the dimensions of the indecomposable summands of $V^{\otimes n}$ and $V \otimes V^*$. 
\end{proof}

\begin{lemma}\label{lem:monomialdual180rotate}
Let $V$ be a monomial representation. The monomial representation that results from rotating its monomial diagram $180^\circ$ gives a monomial representation isomorphic to $V^*$.
\end{lemma}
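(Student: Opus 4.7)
The plan is to write down the action of $\kk \alpha(r,s)$ on $V^*$ explicitly in the dual basis, then identify this with the action on the monomial representation $W$ associated to the $180^\circ$-rotated diagram via a simple re-indexing map.

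First, I would fix the basis $\{v_{i,j}\}$ of $V$ indexed by the cells of the diagram $D$ of $V$, and let $\{v_{i,j}^*\}$ be the dual basis of $V^*$. Since the antipode of $\kk \alpha(r,s)$ fixes both $x$ and $y$ (Definition~\ref{def:comultiplication}), the dual action satisfies $(x \cdot v_{i,j}^*)(v_{k,\ell}) = v_{i,j}^*(x \cdot v_{k,\ell})$ and similarly for $y$. A direct computation then gives
\[
x \cdot v_{i,j}^* = \begin{cases} v_{i-1,j}^* & \text{if } (i-1,j) \in D, \\ 0 & \text{otherwise,} \end{cases} \qquad y \cdot v_{i,j}^* = \begin{cases} v_{i,j-1}^* & \text{if } (i,j-1) \in D, \\ 0 & \text{otherwise.} \end{cases}
\]
That is, on $V^*$, $x$ and $y$ move one step to the \emph{left} and \emph{down}, respectively.

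Next, let $D'$ denote the $180^\circ$-rotation of $D$. Choose constants $a,b$ (depending on the bounding box of $D$) so that $(i,j) \mapsto (a-i, b-j)$ is a bijection between the cells of $D$ and the cells of $D'$. Let $W$ be the monomial representation associated to $D'$, with basis $\{w_{i',j'}\}_{(i',j') \in D'}$, so that $x$ acts by $w_{i',j'} \mapsto w_{i'+1,j'}$ and $y$ by $w_{i',j'} \mapsto w_{i',j'+1}$, with the same zero-boundary convention as usual. Define the linear map
\[
\phi \colon V^* \longrightarrow W, \qquad v_{i,j}^* \longmapsto w_{a-i, b-j}.
\]
This is a bijection between bases, hence a linear isomorphism.

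It then remains to verify that $\phi$ is a $\kk \alpha(r,s)$-module map. For $x$, the image $\phi(x \cdot v_{i,j}^*)$ equals $w_{a-i+1, b-j}$ when $(i-1, j) \in D$, and $0$ otherwise; meanwhile $x \cdot \phi(v_{i,j}^*) = x \cdot w_{a-i, b-j}$ equals $w_{a-i+1, b-j}$ when $(a-i+1, b-j) \in D'$, and $0$ otherwise. But by construction of $D'$, the cell $(a-i+1, b-j)$ lies in $D'$ precisely when $(i-1, j)$ lies in $D$, so the two expressions agree. The verification for $y$ is identical with the roles of the two coordinates swapped. This gives $\phi \colon V^* \xrightarrow{\sim} W$ as $\kk \alpha(r,s)$-modules, completing the proof.

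There is no real obstacle here; the only point requiring a moment of care is the antipode convention in Definition~\ref{def:comultiplication}, which is what makes the signs in the dual action trivial and allows the identification with the rotated diagram to be literally the rotation, rather than a rotation composed with some twist.
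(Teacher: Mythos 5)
Your proposal is correct and follows essentially the same route as the paper: both compute the dual action on the dual basis (using that the antipode fixes $x$ and $y$) and then match it with the rotated diagram via the rotation bijection, the only difference being that the paper writes the isomorphism in the direction $W \to V^*$ while you write its inverse $V^* \to W$ with explicit coordinates $(i,j) \mapsto (a-i,b-j)$. No gaps; the verification that $(a-i+1,b-j) \in D'$ if and only if $(i-1,j) \in D$ is exactly the cell-adjacency argument the paper makes.
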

\begin{proof}

Let $V$ be a monomial representation with basis vectors $v_1, \dots, v_n$ in the cells of its monomial diagram. Let $W$ be the monomial representation resulting from a $180^\circ$ rotation of the monomial diagram of $V$. Let the basis vectors of $W$ be $w_1, \dots, w_n$ such that they are the image of $v_1, \dots, v_n$, respectively, under this rotation. 

Consider $\varphi : W \to V^*$ defined by $\varphi(w_i)v_j=1$ if $i=j$ and $0$ if $i \neq j$. This is an isomorphism of vector spaces, 
so to show that it is an isomorphism of representations we simply check that it respects the action of $G$. We have
$$\varphi(\rho_W(x)w_i)v_j = \begin{cases} 
      1 & \text{if } \rho_W(x)w_i = w_j \\
      0 & \text{otherwise},
   \end{cases}$$
$$(\rho_{V^*}(x) \cdot \varphi(w_i))v_j = \varphi(w_i) (\rho_V(x)v_j) = \begin{cases} 
      1 & \text{if } \rho_V(x)v_j = v_i \\
      0 & \text{otherwise}.
    \end{cases}$$
We have $\rho_W(x)w_i = w_j$ if and only if $w_j$ is to the right of $w_i$ (in the monomial diagram of $W$). This is true if and only if $v_j$ is to the left of $v_i$, which is true if and only if $\rho_V(x)v_j = v_i$. Thus,
$$\varphi(\rho_W(x)w_i)v_j = (\rho_{V^*}(x) \cdot \varphi(w_i))v_j$$
for all $i,j$, so $\varphi$ is a homomorphism of representations. Therefore, $\varphi$ must be an isomorphism of representations, as desired.
\end{proof}

\subsection{Computed data}\label{subsec:monomialcomputeddata}

Using the computer algebra system Magma~\cite{magma}, we can generate all possible monomial diagrams of monomial representations $V$ for a selected dimension of $V$, and compute the dimensions of the indecomposable summands of $V \otimes V^*$. Table~\ref{tab:VxV*dim3}, Table~\ref{tab:VxV*dim5}, and Table~\ref{table:VxV*dim7} contain all monomial diagrams for monomial representations of the specified dimension, along with the dimensions of the indecomposable summands of $V \otimes V^*$. Monomial diagrams that are the same shape but different orientation are omitted.

All of these examples satisfy Conjecture~\ref{conj:newtensdiv4}. No pattern between the dimensions of the indecomposable summands and the shape of the monomial diagram has been conjectured yet. 

In fact, for each example in the table below, additional data was collected using the group tensor product, and the dimensions of the indecomposable summands 
were the same as the same as using the tensor product with the comultiplication structure. We formulate this conjecture:

\begin{conjecture}
Let $V$ be an odd-dimensional indecomposable monomial representation of \\ $G=\mathbb{Z}/2^r \mathbb{Z} \times \mathbb{Z}/2^s \mathbb{Z}$. Let $\otimes$ and $(-)^*$ denote the tensor product and dual of $G$-representations defined using the Hopf algebra structure for $\kk \alpha(r,s)$, and $\otimes'$ and $(-)^\vee$ denote the tensor product and dual of $G$-representations using the standard Hopf algebra structure on $\kk G$. We conjecture that there are isomorphisms of $G$-representations
\[
V \otimes V^* \cong V \otimes' V^\vee, \quad V^{\otimes n} \cong V^{\otimes'n}.
\]
\end{conjecture}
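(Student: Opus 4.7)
My plan is to split the two claimed isomorphisms and reduce each to an auxiliary binary tensor-product statement. Specifically, I would first try to prove that for any monomial representation $V$ and any $\kk G$-module $W$, there is an isomorphism of $\kk G$-modules $V \otimes W \cong V \otimes' W$. Given this, the tensor-power isomorphism $V^{\otimes n} \cong V^{\otimes' n}$ follows by induction on $n$, writing $V^{\otimes n} \cong V \otimes V^{\otimes (n-1)}$. The isomorphism $V \otimes V^* \cong V \otimes' V^\vee$ reduces, via the binary statement with $W = V^\vee$, to the further claim that $V^* \cong V^\vee$ as $\kk G$-modules.

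For the auxiliary claim $V^* \cong V^\vee$: both are $\dim V$-dimensional modules with the same dual basis $\{v_{i,j}^*\}$ indexed by cells of the skew diagram. The $\alpha$-antipode is the identity on $x$ and $y$, so $x$ acts on $V^*$ by sending $v_{i,j}^*$ to $v_{i-1,j}^*$ when that cell exists. The group antipode $g \mapsto g^{-1}$ yields the action of $x = g - 1$ on $V^\vee$ through the expansion $g^{-1} - 1 = x + x^2 + \cdots + x^{2^r - 1}$, and likewise for $y$. The extra terms are unitriangular with respect to the $\mathbb{Z}^2$-grading on the dual basis, so a combinatorially determined change of basis---essentially a M\"obius-style inversion running along each row and column of the diagram---should exhibit a $\kk G$-module isomorphism $V^\vee \cong V^*$.

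For the central binary statement $V \otimes W \cong V \otimes' W$: both sides have the same underlying vector space, and the two actions of $x$ differ by $xv \otimes xw$ (with $y$ analogous), a term of strictly higher total $\mathbb{Z}^2$-degree. Hence, with respect to the filtration by total degree induced by the gradings on $V$ and $W$, the two module structures on $V \otimes W$ share the same associated graded module. The target is then to construct an explicit intertwining isomorphism as a unitriangular change of basis with respect to this filtration, given combinatorially in terms of the cells of the diagrams for $V$ and $W$.

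The principal obstacle is that sharing an associated graded module does not in general force two $\kk G$-modules to be isomorphic: the algebra $\kk G \cong \kk[x,y]/(x^{2^r}, y^{2^s})$ has wild representation type once $r, s \ge 2$, so there is no clean invariant available to test the isomorphism. A naive unitriangular ansatz of the form $\sum c_{a,b}\, x^a v \otimes y^b w$ over-determines the coefficients and breaks down already at second order. A more promising route is to exploit the specific combinatorics of skew Young diagrams and match indecomposable summands on both sides directly, in the style of the explicit decompositions carried out in Section~\ref{sec:tensorpower}; an alternative is to search for a Drinfeld-style twist by an invertible element of $\kk G \otimes \kk G$ that relates the two Hopf structures and acts trivially on monomial modules. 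Because the authors present this only as a conjecture, I expect a complete proof to require either such a new structural insight or a careful combinatorial induction driven by the shape of the diagrams.
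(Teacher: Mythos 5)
The statement you are trying to prove is presented in the paper purely as a conjecture: the authors offer no proof, only computational evidence gathered with Magma for the monomial diagrams in Tables~\ref{tab:VxV*dim3}--\ref{table:VxV*dim7} (they report that the dimensions of the indecomposable summands agree for the two tensor products in every computed example). So there is no paper proof to compare your approach against, and your proposal, as you yourself acknowledge, is a research plan rather than a proof.

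Concretely, both pivotal reductions in your plan are left unestablished. The binary claim $V \otimes W \cong V \otimes' W$ for arbitrary $W$ is where essentially all of the difficulty sits: as you note, the two structures only agree on the associated graded module for the total-degree filtration, and since $\kk G \cong \kk[x,y]/(x^{2^r},y^{2^s})$ is of wild representation type for $r,s \ge 2$, equality of associated graded modules gives no control over the isomorphism type; your unitriangular ansatz is not a proof and you concede it breaks down. The auxiliary claim $V^* \cong V^\vee$ is likewise asserted rather than proved: it amounts to finding a single invertible $P$ that simultaneously conjugates the pair of commuting nilpotents $\bigl(N_x, N_y\bigr)$ into $\bigl(N_x+N_x^2+\cdots,\ N_y+N_y^2+\cdots\bigr)$, and for two commuting nilpotents there is no Jordan-type normal form making such a ``M\"obius-style inversion'' automatic; this needs an actual construction exploiting the skew-diagram combinatorics. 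Finally, be aware that both of your proposed routes would prove statements strictly stronger than the conjecture: the binary claim with arbitrary $W$, and even more so a Drinfeld twist relating $\kk G$ and $\kk\alpha(r,s)$, would yield $V\otimes W \cong V\otimes' W$ for all modules, whereas the authors deliberately restrict to monomial $V$; if such a twist existed, the restriction would be unnecessary, so you should expect that route to fail or at least to be far from evident. In short: no step of the argument is complete, and the problem remains open exactly as in the paper.
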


\begin{table}[H]
\centering
\caption{Dimension $3$ monomial representations.}
\label{tab:VxV*dim3}
\begin{tabular}{|c|c|}
\hline
Dimensions & Monomial Diagrams \\\hline
\begin{tabular}{c}
$[1,4,4]$
\end{tabular}
&
\begin{tabular}{cc}
\\
\begin{tikzpicture}[scale=0.34]
    \draw (0,0) -- (1,0);
    \draw (0,1) -- (1,1);
    \draw (0,2) -- (1,2);
    \draw (0,3) -- (1,3);
    
    \draw (0,0) -- (0,3);
    \draw (1,0) -- (1,3);
\end{tikzpicture}
&
\begin{tikzpicture}[scale=0.34]
    \draw (0,0) -- (2,0);
    \draw (0,1) -- (2,1);
    \draw (0,2) -- (1,2);
    
    \draw (0,0) -- (0,2);
    \draw (1,0) -- (1,2);
    \draw (2,0) -- (2,1);
\end{tikzpicture}
\end{tabular}
\\\hline
\end{tabular}
\end{table}

\begin{table}[H]
\centering
\caption{Dimension $5$ monomial representations.}
\label{tab:VxV*dim5}
\begin{tabular}{|c|c|}
\hline
Dimensions & Monomial Diagrams\\\hline
\begin{tabular}{c}
$[1,12,12]$
\end{tabular}
&
\begin{tabular}{ccc}
\\
\begin{tikzpicture}[scale=0.34]
    \draw (0,0) -- (3,0);
    \draw (0,1) -- (3,1);
    \draw (0,2) -- (1,2);
    \draw (0,3) -- (1,3);
    
    \draw (0,0) -- (0,3);
    \draw (1,0) -- (1,3);
    \draw (2,0) -- (2,1);
    \draw (3,0) -- (3,1);
\end{tikzpicture}
&
\begin{tikzpicture}[scale=0.34]
    \draw (1,0) -- (2,0);
    \draw (0,1) -- (2,1);
    \draw (0,2) -- (2,2);
    \draw (0,3) -- (1,3);
    \draw (0,4) -- (1,4);
    
    \draw (0,1) -- (0,4);
    \draw (1,0) -- (1,4);
    \draw (2,0) -- (2,2);
\end{tikzpicture}
&
\begin{tikzpicture}[scale=0.34]
    \draw (1,0) -- (3,0);
    \draw (1,1) -- (3,1);
    \draw (0,2) -- (2,2);
    \draw (0,3) -- (2,3);
    
    \draw (0,2) -- (0,3);
    \draw (1,0) -- (1,3);
    \draw (2,0) -- (2,3);
    \draw (3,0) -- (3,1);
\end{tikzpicture}
\end{tabular}
\\\hline
\begin{tabular}{c}
$[1,4,4,8,8]$
\end{tabular}
&
\begin{tabular}{ccc}
\\
\begin{tikzpicture}[scale=0.34]
    \draw (0,0) -- (2,0);
    \draw (0,1) -- (2,1);
    \draw (0,2) -- (1,2);
    \draw (0,3) -- (1,3);
    \draw (0,4) -- (1,4);
    
    \draw (0,0) -- (0,4);
    \draw (1,0) -- (1,4);
    \draw (2,0) -- (2,1);
\end{tikzpicture}
&
\begin{tikzpicture}[scale=0.34]
    \draw (0,0) -- (2,0);
    \draw (0,1) -- (2,1);
    \draw (0,2) -- (2,2);
    \draw (0,3) -- (1,3);
    
    \draw (0,0) -- (0,3);
    \draw (1,0) -- (1,3);
    \draw (2,0) -- (2,2);
\end{tikzpicture}
&
\begin{tikzpicture}[scale=0.34]
    \draw (0,0) -- (1,0);
    \draw (0,1) -- (1,1);
    \draw (0,2) -- (1,2);
    \draw (0,3) -- (1,3);
    \draw (0,4) -- (1,4);
    \draw (0,5) -- (1,5);
    
    \draw (0,0) -- (0,5);
    \draw (1,0) -- (1,5);
\end{tikzpicture}
\end{tabular}
\\\hline
\begin{tabular}{c}
$[1,4,4,4,4,4,4]$
\end{tabular}
&
\begin{tabular}{c}
\\
\begin{tikzpicture}[scale=0.34]
    \draw (1,0) -- (3,0);
    \draw (0,1) -- (3,1);
    \draw (0,2) -- (2,2);
    \draw (0,3) -- (1,3);
    
    \draw (0,1) -- (0,3);
    \draw (1,0) -- (1,3);
    \draw (2,0) -- (2,2);
    \draw (3,0) -- (3,1);
\end{tikzpicture}
\end{tabular}
\\\hline
\end{tabular}
\end{table}

\begin{table}[H]
\centering
\caption{Dimension $7$ monomial representations.}
\label{table:VxV*dim7}
\makebox[\textwidth][c]{
\begin{tabular}{|>{\centering\arraybackslash}p{5cm}|>{\centering\arraybackslash}p{10cm}|}
\hline
Dimensions & Monomial Diagrams
\\\hline
\begin{tabular}{c}
$[1,4,4,20,20]$
\end{tabular}
&
\begin{tabular}{ccccccc}
\\
\begin{tikzpicture}[scale=0.34]
    \draw (0,0) -- (3,0);
    \draw (0,1) -- (3,1);
    \draw (0,2) -- (1,2);
    \draw (0,3) -- (1,3);
    \draw (0,4) -- (1,4);
    \draw (0,5) -- (1,5);
    \draw (0,0) -- (0,5);
    \draw (1,0) -- (1,5);
    \draw (2,0) -- (2,1);
    \draw (3,0) -- (3,1);
    
\end{tikzpicture}
&
\begin{tikzpicture}[scale=0.34]
    \draw (1,0) -- (2,0);
    \draw (0,1) -- (2,1);
    \draw (0,2) -- (2,2);
    \draw (0,3) -- (1,3);
    \draw (0,4) -- (1,4);
    \draw (0,5) -- (1,5);
    \draw (0,6) -- (1,6);
    
    \draw (0,1) -- (0,6);
    \draw (1,0) -- (1,6);
    \draw (2,0) -- (2,2);
\end{tikzpicture}
&
\begin{tikzpicture}[scale=0.34]
    \draw (1,0) -- (3,0);
    \draw (0,1) -- (3,1);
    \draw (0,2) -- (3,2);
    \draw (0,3) -- (1,3);
    \draw (0,4) -- (1,4);
    
    \draw (0,1) -- (0,4);
    \draw (1,0) -- (1,4);
    \draw (2,0) -- (2,2);
    \draw (3,0) -- (3,2);
\end{tikzpicture}
&
\begin{tikzpicture}[scale=0.34]
    \draw (2,0) -- (3,0);
    \draw (0,1) -- (3,1);
    \draw (0,2) -- (3,2);
    \draw (0,3) -- (1,3);
    \draw (0,4) -- (1,4); 
    \draw (0,5) -- (1,5);
    \draw (1,1) -- (1,5);
    \draw (2,0) -- (2,2);
    \draw (3,0) -- (3,2);
    \draw (0,1) -- (0,5);
    
\end{tikzpicture}
&
\begin{tikzpicture}[scale=0.34]
    \draw (2,0) -- (3,0);
    \draw (1,1) -- (3,1);
    \draw (0,2) -- (3,2);
    \draw (0,3) -- (2,3);
    \draw (0,4) -- (1,4);
    \draw (0,5) -- (1,5);
    
    \draw (0,2) -- (0,5);
    \draw (1,1) -- (1,5);
    \draw (2,0) -- (2,3);
    \draw (3,0) -- (3,2);
\end{tikzpicture}
&
\begin{tikzpicture}[scale=0.34]
    \draw (1,0) -- (3,0);
    \draw (1,1) -- (3,1);
    \draw (1,2) -- (2,2);
    \draw (1,3) -- (2,3);
    \draw (0,4) -- (2,4);
    \draw (0,5) -- (2,5);
    
    \draw (0,4) -- (0,5);
    \draw (1,0) -- (1,5);
    \draw (2,0) -- (2,5);
    \draw (3,0) -- (3,1);
\end{tikzpicture}
&
\begin{tikzpicture}[scale=0.34]
    \draw (2,0) -- (4,0);
    \draw (1,1) -- (4,1);
    \draw (1,2) -- (3,2);
    \draw (0,3) -- (2,3);
    \draw (0,4) -- (2,4);
    \draw (0,3) -- (0,4);
    \draw (1,1) -- (1,4);
    \draw (2,0) -- (2,4);
    \draw (3,0) -- (3,2);
    \draw (4,0) -- (4,1);
    
\end{tikzpicture}
\end{tabular}
\\\hline
\begin{tabular}{c}
$[1,8,8,16,16]$
\end{tabular}
&
\begin{tabular}{ccccc}
\\
\begin{tikzpicture}[scale=0.34]
    \draw (0,0) -- (2,0);
    \draw (0,1) -- (2,1);
    \draw (0,2) -- (1,2);
    \draw (0,3) -- (1,3);
    \draw (0,4) -- (1,4);
    \draw (0,5) -- (1,5);
    \draw (0,6) -- (1,6);
    
    \draw (0,0) -- (0,6);
    \draw (1,0) -- (1,6);
    \draw (2,0) -- (2,1);
\end{tikzpicture}
&
\begin{tikzpicture}[scale=0.34]
    \draw (0,0) -- (3,0);
    \draw (0,1) -- (3,1);
    \draw (0,2) -- (2,2);
    \draw (0,3) -- (2,3);
    
    \draw (0,0) -- (0,3);
    \draw (1,0) -- (1,3);
    \draw (2,0) -- (2,3);
    \draw (3,0) -- (3,1);
\end{tikzpicture}
&
\begin{tikzpicture}[scale=0.34]
    \draw (1,0) -- (3,0);
    \draw (0,1) -- (3,1);
    \draw (0,2) -- (2,2);
    \draw (0,3) -- (2,3);
    \draw (0,4) -- (1,4);
    
    \draw (0,1) -- (0,4);
    \draw (1,0) -- (1,4);
    \draw (2,0) -- (2,3);
    \draw (3,0) -- (3,1);
\end{tikzpicture}
&
\begin{tikzpicture}[scale=0.34]
    \draw (1,0) -- (3,0);
    \draw (1,1) -- (3,1);
    \draw (1,2) -- (2,2);
    \draw (0,3) -- (2,3);
    \draw (0,4) -- (2,4);
    \draw (0,5) -- (1,5);
    
    \draw (0,3) -- (0,5);
    \draw (1,0) -- (1,5);
    \draw (2,0) -- (2,4);
    \draw (3,0) -- (3,1);
\end{tikzpicture}
&
\begin{tikzpicture}[scale=0.34]
    \draw (2,0) -- (3,0);
    \draw (1,1) -- (3,1);
    \draw (1,2) -- (3,2);
    \draw (0,3) -- (2,3);
    \draw (0,4) -- (2,4);
    \draw (0,5) -- (1,5);
    
    \draw (0,3) -- (0,5);
    \draw (1,1) -- (1,5);
    \draw (2,0) -- (2,4);
    \draw (3,0) -- (3,2);
\end{tikzpicture}
\end{tabular}
\\\hline
\begin{tabular}{c}
$[1,24,24]$
\end{tabular}
&
\begin{tabular}{ccccc}
\\
\begin{tikzpicture}[scale=0.34]
    \draw (1,0) -- (4,0);
    \draw (0,1) -- (4,1);
    \draw (0,2) -- (2,2);
    \draw (0,3) -- (1,3);
    \draw (0,4) -- (1,4);
    
    \draw (0,1) -- (0,4);
    \draw (1,0) -- (1,4);
    \draw (2,0) -- (2,2);
    \draw (3,0) -- (3,1);
    \draw (4,0) -- (4,1);
\end{tikzpicture}
&
\begin{tikzpicture}[scale=0.34]
    \draw (1,0) -- (3,0);
    \draw (1,1) -- (3,1);
    \draw (0,2) -- (2,2);
    \draw (0,3) -- (2,3);
    \draw (0,4) -- (1,4);
    \draw (0,5) -- (1,5);
    
    \draw (0,2) -- (0,5);
    \draw (1,0) -- (1,5);
    \draw (2,0) -- (2,3);
    \draw (3,0) -- (3,1);
\end{tikzpicture}
&
\begin{tikzpicture}[scale=0.34]
    \draw (1,0) -- (4,0);
    \draw (1,1) -- (4,1);
    \draw (0,2) -- (2,2);
    \draw (0,3) -- (2,3);
    \draw (0,4) -- (1,4);
    
    \draw (0,2) -- (0,4);
    \draw (1,0) -- (1,4);
    \draw (2,0) -- (2,3);
    \draw (3,0) -- (3,1);
    \draw (4,0) -- (4,1);
\end{tikzpicture}
&
\begin{tikzpicture}[scale=0.34]
    \draw (1,0) -- (4,0);
    \draw (1,1) -- (4,1);
    \draw (1,2) -- (2,2);
    \draw (0,3) -- (2,3);
    \draw (0,4) -- (2,4);
    \draw (0,3) -- (0,4);
    \draw (1,0) -- (1,4);
    \draw (2,0) -- (2,4);
    \draw (3,0) -- (3,1);
    \draw (4,0) -- (4,1);
    
\end{tikzpicture}
&
\begin{tikzpicture}[scale=0.34]
    \draw (2,0) -- (3,0);
    \draw (2,1) -- (3,1);
    \draw (0,2) -- (3,2);
    \draw (0,3) -- (3,3);
    \draw (0,4) -- (1,4);
    \draw (0,5) -- (1,5);
    
    \draw (0,2) -- (0,5);
    \draw (1,2) -- (1,5);
    \draw (2,0) -- (2,3);
    \draw (3,0) -- (3,3);
\end{tikzpicture}
\end{tabular}
\\\hline
\begin{tabular}{c}
$[1,48]$
\end{tabular}
&
\begin{tabular}{cccc}
\\
\begin{tikzpicture}[scale=0.34]
    \draw (0,0) -- (2,0);
    \draw (0,1) -- (2,1);
    \draw (0,2) -- (2,2);
    \draw (0,3) -- (1,3);
    \draw (0,4) -- (1,4);
    \draw (0,5) -- (1,5);
    
    \draw (0,0) -- (0,5);
    \draw (1,0) -- (1,5);
    \draw (2,0) -- (2,2);
\end{tikzpicture}
&
\begin{tikzpicture}[scale=0.34]
    \draw (0,0) -- (3,0);
    \draw (0,1) -- (3,1);
    \draw (0,2) -- (2,2);
    \draw (0,3) -- (1,3);
    \draw (0,4) -- (1,4);
    
    \draw (0,0) -- (0,4);
    \draw (1,0) -- (1,4);
    \draw (2,0) -- (2,2);
    \draw (3,0) -- (3,1);
\end{tikzpicture}
&
\begin{tikzpicture}[scale=0.34]
    \draw (1,0) -- (3,0);
    \draw (0,1) -- (3,1);
    \draw (0,2) -- (3,2);
    \draw (0,3) -- (2,3);
    \draw (0,1) -- (0,3);
    \draw (1,0) -- (1,3);
    \draw (2,0) -- (2,3);
    \draw (3,0) -- (3,2);
    
\end{tikzpicture}
&
\begin{tikzpicture}[scale=0.34]
    \draw (2,0) -- (4,0);
    \draw (2,1) -- (4,1);
    \draw (0,2) -- (3,2);
    \draw (0,3) -- (3,3);
    \draw (0,4) -- (1,4);
    \draw (0,2) -- (0,4);
    \draw (1,2) -- (1,4);
    \draw (2,0) -- (2,3);
    \draw (3,0) -- (3,3);
    \draw (4,0) -- (4,1);
\end{tikzpicture}
\end{tabular}
\\\hline
\begin{tabular}{c}
$[1,8,8,8,8,8,8]$
\end{tabular}
&
\begin{tabular}{cccc}
\\
\begin{tikzpicture}[scale=0.34]
    \draw (0,0) -- (0,7);
    \draw (1,0) -- (1,7);
    \draw (0,0) -- (1,0);
    \draw (0,1) -- (1,1);
    \draw (0,2) -- (1,2);
    \draw (0,3) -- (1,3);
    \draw (0,4) -- (1,4);
    \draw (0,5) -- (1,5);
    \draw (0,6) -- (1,6);
    \draw (0,7) -- (1,7);
    
\end{tikzpicture}
&
\begin{tikzpicture}[scale=0.34]
    \draw (1,0) -- (2,0);
    \draw (0,1) -- (2,1);
    \draw (0,2) -- (2,2);
    \draw (0,3) -- (2,3);
    \draw (0,4) -- (1,4);
    \draw (0,5) -- (1,5);
    
    \draw (0,1) -- (0,5);
    \draw (1,0) -- (1,5);
    \draw (2,0) -- (2,3);
\end{tikzpicture}
&
\begin{tikzpicture}[scale=0.34]
    \draw (0,2) -- (0,4);
    \draw (1,0) -- (1,4);
    \draw (2,0) -- (2,4);
    \draw (3,0) -- (3,1);
    \draw (1,0) -- (3,0);
    \draw (1,1) -- (3,1);
    \draw (0,2) -- (2,2);
    \draw (0,3) -- (2,3);
    \draw (0,4) -- (2,4);
\end{tikzpicture}
&
\begin{tikzpicture}[scale=0.34]
    \draw (0,0) -- (2,0);
    \draw (0,1) -- (2,1);
    \draw (0,2) -- (2,2);
    \draw (0,3) -- (2,3);
    \draw (0,4) -- (1,4);
    
    \draw (0,0) -- (0,4);
    \draw (1,0) -- (1,4);
    \draw (2,0) -- (2,3);
\end{tikzpicture}
\end{tabular}
\\\hline
\end{tabular}
}
\begin{tabular}{|>{\centering\arraybackslash}p{5cm}|>{\centering\arraybackslash}p{10cm}|}
\begin{tabular}{c}
$[1,4,4,4,4,8,8,8,8]$
\end{tabular}
&
\begin{tabular}{ccc}
\\
\begin{tikzpicture}[scale=0.34]
    \draw (1,0) -- (3,0);
    \draw (0,1) -- (3,1);
    \draw (0,2) -- (2,2);
    \draw (0,3) -- (1,3);
    \draw (0,4) -- (1,4);
    \draw (0,5) -- (1,5);
    
    \draw (0,1) -- (0,5);
    \draw (1,0) -- (1,5);
    \draw (2,0) -- (2,2);
    \draw (3,0) -- (3,1);
\end{tikzpicture}
&
\begin{tikzpicture}[scale=0.34]
    \draw (1,0) -- (2,0);
    \draw (1,1) -- (2,1);
    \draw (0,2) -- (2,2);
    \draw (0,3) -- (2,3);
    \draw (0,4) -- (1,4);
    \draw (0,5) -- (1,5);
    \draw (0,6) -- (1,6);
    
    \draw (0,2) -- (0,6);
    \draw (1,0) -- (1,6);
    \draw (2,0) -- (2,3);
\end{tikzpicture}
&
\begin{tikzpicture}[scale=0.34]
    \draw (1,0) -- (3,0);
    \draw (1,1) -- (3,1);
    \draw (0,2) -- (3,2);
    \draw (0,3) -- (2,3);
    \draw (0,4) -- (1,4);
    \draw (0,2) -- (0,4);
    \draw (1,0) -- (1,4);
    \draw (2,0) -- (2,3);
    \draw (3,0) -- (3,2);
    
\end{tikzpicture}
\end{tabular}
\\\hline
\begin{tabular}{c}
$[1,4,4,4,4,16,16]$
\end{tabular}
&
\begin{tabular}{c}
\\
\begin{tikzpicture}[scale=0.34]
    \draw (0,0) -- (4,0);
    \draw (0,1) -- (4,1);
    \draw (0,2) -- (1,2);
    \draw (0,3) -- (1,3);
    \draw (0,4) -- (1,4);
    
    \draw (0,0) -- (0,4);
    \draw (1,0) -- (1,4);
    \draw (2,0) -- (2,1);
    \draw (3,0) -- (3,1);
    \draw (4,0) -- (4,1);
\end{tikzpicture}
\end{tabular}
\\\hline
\begin{tabular}{c}
$[1,4,4,40]$
\end{tabular}
&
\begin{tabular}{c}
\\
\begin{tikzpicture}[scale=0.34]
    \draw (2,0) -- (3,0);
    \draw (0,1) -- (3,1);
    \draw (0,2) -- (3,2);
    \draw (0,3) -- (2,3);
    \draw (0,4) -- (1,4);
    \draw (0,1) -- (0,4);
    \draw (1,1) -- (1,4);
    \draw (2,0) -- (2,3);
    \draw (3,0) -- (3,2);
    
\end{tikzpicture}
\end{tabular}
\\\hline
\begin{tabular}{c}
$[1,4,4,4,4,4,4,4,4,4,4,4,4]$
\end{tabular}
&
\begin{tabular}{c}
\\
\begin{tikzpicture}[scale=0.34]
    \draw (2,0) -- (4,0);
    \draw (1,1) -- (4,1);
    \draw (0,2) -- (3,2);
    \draw (0,3) -- (2,3);
    \draw (0,4) -- (1,4);
    
    \draw (0,2) -- (0,4);
    \draw (1,1) -- (1,4);
    \draw (2,0) -- (2,3);
    \draw (3,0) -- (3,2);
    \draw (4,0) -- (4,1);
\end{tikzpicture}
\end{tabular}
\\\hline
\end{tabular}
\end{table}

\section{Benson's tensor powers conjecture for monomial modules}\label{sec:tensorpower}

In this section, we consider Conjecture \ref{conj:tensorpowers} for $\alpha(r,s)$. 

\subsection{Basic results}\label{subsec:tenspowbasicresults}

We use the notation given in the introduction. Let $V$ be an odd-dimensional monomial module for $\alpha(r,s)$. We assume that Conjecture \ref{conj:p2div4} holds for $\alpha(r,s)$, so that $V^{\otimes i}$ has a unique odd-dimensional indecomposable summand, denoted by $V_i$. Recall that $P_V(n)$ denotes the dimension of $V_n$.

We may also characterize $V_i$ in the following recursive way:

\begin{proposition}
    There is a unique odd-dimensional indecomposable summand of $V_i \otimes V$, which is isomorphic to $V_{i+1}$. 
\end{proposition}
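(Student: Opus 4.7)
The plan is to unwind the definition of $V_{i+1}$ via a bookkeeping argument on odd-dimensional summands, using the Krull--Schmidt theorem together with the earlier observation~\cite[Corollary 2.2]{benson} that any tensor product involving an even-dimensional representation decomposes into even-dimensional indecomposable summands.

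First, I would fix a Krull--Schmidt decomposition of $V^{\otimes i}$ as
\[
V^{\otimes i} \;\cong\; V_i \oplus E_i,
\]
where $V_i$ is the unique odd-dimensional indecomposable summand (whose existence and uniqueness is guaranteed by our standing assumption that Conjecture~\ref{conj:p2div4} holds for $\alpha(r,s)$), and $E_i$ is a direct sum of even-dimensional indecomposables. Tensoring with $V$ on the right gives
\[
V^{\otimes (i+1)} \;\cong\; (V_i \otimes V) \;\oplus\; (E_i \otimes V).
\]

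Next I would observe that $E_i \otimes V$ contributes no odd-dimensional indecomposable summands: writing $E_i$ as a direct sum of even-dimensional indecomposables and applying~\cite[Corollary 2.2]{benson} to each such summand tensored with $V$, every indecomposable summand of $E_i \otimes V$ has even dimension. By Krull--Schmidt, the odd-dimensional indecomposable summands of $V^{\otimes(i+1)}$ are therefore exactly the odd-dimensional indecomposable summands of $V_i \otimes V$.

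Finally, applying the same assumption again to $V^{\otimes(i+1)}$, this module has a unique odd-dimensional indecomposable summand, namely $V_{i+1}$ by definition. Matching the two descriptions forces $V_i \otimes V$ to have exactly one odd-dimensional indecomposable summand, and that summand must be isomorphic to $V_{i+1}$. There is no real obstacle here beyond invoking Krull--Schmidt correctly; the argument is a clean parity count, and the only content is the already-cited fact that even-dimensional representations tensor to even-dimensional indecomposables.
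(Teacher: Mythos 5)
Your proof is correct and takes essentially the same approach as the paper: write $V^{\otimes i} \cong V_i \oplus E_i$ with $E_i$ even-dimensional, tensor with $V$, and use Krull--Schmidt together with the uniqueness of the odd-dimensional indecomposable summand of $V^{\otimes(i+1)}$. The only cosmetic difference is that the paper obtains the existence of an odd-dimensional summand inside $V_i \otimes V$ directly from the parity of $\dim(V_i \otimes V)$, whereas you rule out odd-dimensional summands of $E_i \otimes V$ via the cited even-tensor result; both steps are valid.
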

\begin{proof}
We have $V^{\otimes n} = V^{\otimes (n-1)} \otimes V = (V_{n-1} \oplus W_1 \oplus \cdots \oplus W_m)\otimes V$, where $\dim W_i$ is even for all $i$. Then, $V^{\otimes n} = (V_{n-1} \otimes V) \oplus (W_1 \otimes V) \oplus \cdots \oplus (W_m \otimes V)$. The only odd-dimensional indecomposable summand of this is $V_n$, by assumption. Since $\dim (V_{n-1}\otimes V)$ is odd, then $V_n$ must be an indecomposable summand of $V_{n-1}\otimes V$.
\end{proof}

For self-dual modules, it is straightforward to compute $P_V(n)$, as follows:

\begin{proposition}\label{prop:symm180tenspow}
    If the monomial diagram of $V$ is symmetric by rotation of $180^\circ$, then $V_{\normalfont\text{odd}} = V$ and $V_{\normalfont\text{even}} = \kk$. In particular,
    $$P_V(n) = \begin{cases} 
      \dim V & \text{if } n \text{ odd} \\
      1 & \text{if } n \text{ even}.
   \end{cases}$$
\end{proposition}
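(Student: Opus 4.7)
The plan is to use induction on $n$, leveraging the self-duality of $V$ implied by the $180^\circ$ rotational symmetry together with the recursive characterization of $V_n$ from the preceding proposition.

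First, I would invoke Lemma~\ref{lem:monomialdual180rotate}: since the monomial diagram of $V$ is symmetric under $180^\circ$ rotation, the lemma gives an isomorphism $V \cong V^*$. This is the one and only structural input that the symmetry hypothesis provides; from here the argument is purely formal.

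Next, I would set up the induction. The base case $n=1$ is immediate: $V^{\otimes 1} = V$ is itself odd-dimensional and indecomposable, so $V_1 = V$. For the inductive step, the previous proposition tells us that $V_n$ can be identified as the unique odd-dimensional indecomposable summand of $V_{n-1} \otimes V$. I would split into two cases based on the parity of $n-1$. If $n-1$ is even, then by the inductive hypothesis $V_{n-1} \cong \kk$, so $V_{n-1} \otimes V \cong V$, whose unique odd-dimensional indecomposable summand is $V$ itself; hence $V_n = V$. If $n-1$ is odd, then by the inductive hypothesis $V_{n-1} \cong V$, so $V_{n-1} \otimes V \cong V \otimes V \cong V \otimes V^*$, using the self-duality established above. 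By the standing assumption that Conjecture~\ref{conj:p2div4} holds for $\alpha(r,s)$, the decomposition of $V \otimes V^*$ has $\kk$ as its unique odd-dimensional indecomposable summand, so $V_n = \kk$.

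The dimension formula for $P_V(n)$ then follows immediately from the identifications $V_{\text{odd}} = V$ and $V_{\text{even}} = \kk$. There is no real obstacle here: the proof is essentially a bookkeeping argument once self-duality and the recursive characterization are in place. The only subtle point is remembering that everything rests on the blanket assumption (made at the start of Section~\ref{subsec:tenspowbasicresults}) that Conjecture~\ref{conj:p2div4} holds, which is what guarantees that $V \otimes V^*$ contributes only $\kk$ to the odd-dimensional part.
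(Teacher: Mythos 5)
Your proposal is correct and follows essentially the same route as the paper: self-duality via Lemma~\ref{lem:monomialdual180rotate}, then the recursive identification of $V_n$ inside $V_{n-1}\otimes V$, with the standing assumption of Conjecture~\ref{conj:p2div4} (and the fact that $\kk$ is a summand of $V\otimes V^*$) forcing $V_{\text{even}}=\kk$ and $V_{\text{odd}}=V$. The paper merely compresses your induction into the observation $P_V(2)=1$, $V_3=V$, ``by induction,'' so there is nothing to add.
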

\begin{proof}
By Lemma~\ref{lem:monomialdual180rotate}, $V$ is self-dual. Recall that the trivial representation $\kk$ is a summand of \\ $V \otimes V^* \cong V \otimes V$, so $P_V(2) = 1$. Then $V_2 \otimes V = \kk \otimes V = V$, so $V_3 = V$. Thus $P_V(3) = \dim V$. By induction, we find that $V_{\text{odd}} = V$ and $V_{\text{even}} = \kk$.
\end{proof}

\subsection{``Staircase" monomial representations}\label{subsec:tenspowstaircase}

We next consider the monomial representations for $\alpha(1,1)$. We refer to these representations in the following way, based on their diagrams' appearance.

\begin{definition}
    Call the monomial representation given by the partitions 
    \[
    (m,m-1,\dots,1)/(m-2,m-3,\dots,1)
    \]
    the \textit{$m$-staircase} monomial representation.
\end{definition}

From the classification of all indecomposable summands of $\mathbb{Z}/2\mathbb{Z} \times \mathbb{Z}/2\mathbb{Z}$ in~\cite[pp. 176]{bensonz2xz2}, we know that $\Omega^m(\kk)$ are these odd-dimensional indecomposable representations, where $\Omega^m(V)$ is the $m$-th syzygy, defined as the kernel of the $m$-th map in the projective resolution of $V$. We can calculate these syzygies and it can be shown that the $m$-staircase monomial representation is $\Omega^{1-m}(\kk)$. To examine tensor powers, we need two lemmas. The following lemma is found in~\cite[Corollary~3.1.6]{bensonsyzygy} in the finite group case, and the same proof holds for any finite group scheme, in particular $\alpha(r,s)$:

\begin{lemma}\label{lem:syzygytensor}
    If $V$ and $W$ are representations of $\kk G$ for a finite group $G$, then $\Omega(V) \otimes W$ is isomorphic to $\Omega(V \otimes W)$, up to projective summands.
\end{lemma}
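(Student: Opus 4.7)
The plan is to apply Schanuel's lemma to two short exact sequences ending in $V \otimes W$. Begin with a projective cover $P \twoheadrightarrow V$ with kernel $\Omega(V)$, giving the short exact sequence
\[
0 \to \Omega(V) \to P \to V \to 0.
\]
Tensoring over $\kk$ with $W$ is exact, so we obtain a short exact sequence of $\kk G$-modules
\[
0 \to \Omega(V) \otimes W \to P \otimes W \to V \otimes W \to 0,
\]
where the $\kk G$-module structures on the tensor products are defined via the comultiplication.

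The key step is to verify that $P \otimes W$ is projective. For any finite-dimensional Hopf algebra $H$ and any left $H$-module $W$, there is a standard isomorphism of left $H$-modules $H \otimes W \cong H^{\oplus \dim_\kk W}$, where $H \otimes W$ carries the diagonal action coming from the comultiplication; the isomorphism is built from the antipode (explicitly, $h \otimes w \mapsto h_{(1)} \otimes S(h_{(2)})w$ intertwines the diagonal action with the action on the first factor only). Hence tensoring a free module with $W$ yields a free module, and tensoring a projective module with $W$ yields a projective module. This observation applies verbatim to the group algebra of any finite group scheme, including $\alpha(r,s)$.

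Now let $Q \twoheadrightarrow V \otimes W$ be a projective cover of $V \otimes W$ with kernel $\Omega(V \otimes W)$. Applying Schanuel's lemma to the two short exact sequences
\[
0 \to \Omega(V \otimes W) \to Q \to V \otimes W \to 0, \qquad 0 \to \Omega(V) \otimes W \to P \otimes W \to V \otimes W \to 0,
\]
we obtain an isomorphism
\[
\Omega(V) \otimes W \oplus Q \;\cong\; \Omega(V \otimes W) \oplus (P \otimes W).
\]
Since both $Q$ and $P \otimes W$ are projective, this identifies $\Omega(V) \otimes W$ with $\Omega(V \otimes W)$ up to projective summands, as claimed.

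The only nontrivial ingredient is the verification that $P \otimes W$ is projective; this is precisely the step that uses the Hopf algebra structure (the antipode and comultiplication) rather than merely the underlying algebra structure of $\kk G$. Once this is in hand, Schanuel's lemma supplies the remainder of the argument mechanically.
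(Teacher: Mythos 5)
Your proof is correct and is essentially the standard argument that the paper relies on: it cites Benson's Corollary~3.1.6 rather than reproducing a proof, and the cited proof is exactly this one — tensor the projective cover sequence with $W$, use the Hopf-algebraic tensor identity $H \otimes W \cong H^{\oplus \dim_\kk W}$ (via the antipode) to see $P \otimes W$ is projective, and conclude by Schanuel's lemma. Your observation that the argument uses only the Hopf algebra structure is also precisely why the paper can assert the lemma for the finite group scheme $\alpha(r,s)$ and not just for group algebras.
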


Also, we need the following lemma given in~\cite[Corollary 8.1.3]{webbrep}:

\begin{lemma}\label{lem:projReven}
    If $\kk$ is an algebraically closed field of characteristic $2$ and $2$ divides $|G|$, then all projective representations of $G$ have even dimension.
\end{lemma}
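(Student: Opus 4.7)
The plan is to reduce to the case where $G$ is itself a $2$-group via Sylow restriction, and then exploit the fact that the group algebra of a $p$-group in characteristic $p$ is local.

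First I would let $H \leq G$ be a Sylow $2$-subgroup; since $2 \mid |G|$, we have $|H| = 2^a$ for some $a \geq 1$. The coset decomposition shows that $\kk G$ is free as a $\kk H$-module of rank $[G:H]$, so restriction along $\kk H \hookrightarrow \kk G$ takes projective $\kk G$-modules to projective $\kk H$-modules (a summand of a free $\kk G$-module restricts to a summand of a free $\kk H$-module). Because restriction preserves $\kk$-dimension, it suffices to show every projective $\kk H$-module has even $\kk$-dimension.

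Next I would invoke the classical fact that for a finite $p$-group $H$ and algebraically closed field $\kk$ of characteristic $p$, the only simple $\kk H$-module is the trivial one $\kk$. The usual proof is by induction on $|H|$: since the center $Z(H)$ is nontrivial, pick a central $z$ of order $p$; by Schur's lemma $z$ acts on a simple module as a scalar $\lambda$, and $(z-1)^p = z^p - 1 = 0$ forces $\lambda = 1$, reducing to a simple module of $H/\langle z\rangle$. It follows that $\kk H$ is local: the augmentation ideal coincides with the Jacobson radical, and the only indecomposable projective $\kk H$-module is the projective cover of $\kk$, namely $\kk H$ itself. Therefore every projective $\kk H$-module is free, hence has dimension a multiple of $|H| = 2^a$, which is even.

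The main conceptual input is the second step, the recognition that the only simple $\kk H$-module over a finite $2$-group $H$ in characteristic $2$ is the trivial one. This is standard but is essentially the only nontrivial ingredient; the remaining pieces (freeness of $\kk G$ over $\kk H$, preservation of projectivity under restriction, and the structure of projective modules over a local Artinian algebra) are routine.
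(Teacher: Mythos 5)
Your proof is correct. Note that the paper does not prove this lemma at all: it is quoted directly from Webb's book (Corollary 8.1.3), so there is no internal argument to compare against. Your argument is the standard proof of that cited fact, and in fact establishes the sharper statement that the dimension of any projective $\kk G$-module is divisible by the order of a Sylow $2$-subgroup: restriction to a Sylow $2$-subgroup $H$ preserves projectivity (a summand of a free $\kk G$-module restricts to a summand of a free $\kk H$-module) and dimension, and since $\kk H$ is local in characteristic $2$ (the only simple module being trivial, by the central-element/nilpotency induction you sketch), every projective $\kk H$-module is free, hence of dimension divisible by $|H|=2^a$ with $a\geq 1$. One small remark: Schur's lemma is not needed for the scalar-action step; it suffices that a central element $z$ of order $2$ satisfies $(z-1)^2=0$, so $\ker(z-1)$ is a nonzero submodule, forcing $z$ to act trivially on a simple module. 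This makes the argument work without invoking algebraic closedness, though of course the hypothesis is available here.
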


We can prove the following proposition about the tensor powers of staircase monomial representations:

\begin{proposition}
    If $V$ is the $m$-staircase representation, then $V_n$ is the $(mn-n+1)$-staircase monomial representation.
\end{proposition}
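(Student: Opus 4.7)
My plan is to reduce the claim to a computation in the stable module category by exploiting the identification of the $m$-staircase module with $\Omega^{1-m}(\kk)$ that the authors have already recorded. The key observation is that Lemma~\ref{lem:projReven} guarantees that every projective $\kk\alpha(1,1)$-module is even-dimensional, so the unique odd-dimensional indecomposable summand $V_n$ of $V^{\otimes n}$ is determined by $V^{\otimes n}$ modulo projective summands, which is exactly the regime in which Lemma~\ref{lem:syzygytensor} applies.

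The first step will be to prove by induction on $n$ that
\[
V^{\otimes n} \cong \Omega^{n(1-m)}(\kk) \oplus P_n
\]
for some projective module $P_n$. The base case $n=1$ is the given identification of $V$. For the inductive step, I tensor both sides with $V = \Omega^{1-m}(\kk)$, use that tensoring a projective with any module over a Hopf algebra yields a projective to absorb the contribution of $P_n \otimes V$, and then apply Lemma~\ref{lem:syzygytensor} $|1-m|$ times to move the syzygy operator across the tensor product. For $m \geq 2$ the iteration uses the analogue of Lemma~\ref{lem:syzygytensor} for the inverse syzygy $\Omega^{-1}$, which is well-defined on the stable module category of the self-injective algebra $\kk\alpha(1,1)$ and which satisfies the same identity by the same argument as in the cited source.

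Once this decomposition is in hand, the second step is immediate: Lemma~\ref{lem:projReven} says every indecomposable summand of $P_n$ has even dimension, while the classification recalled before the statement says each $\Omega^{k}(\kk)$ is itself an odd-dimensional indecomposable representation. Hence the unique odd-dimensional indecomposable summand of $V^{\otimes n}$ is
\[
V_n \cong \Omega^{n(1-m)}(\kk) = \Omega^{1-(nm-n+1)}(\kk),
\]
which under the established dictionary between syzygies and staircase modules is precisely the $(nm-n+1)$-staircase representation; a dimension check ($2(nm-n+1)-1 = 2nm-2n+1$) confirms the count.

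The main obstacle I expect is the bookkeeping of projective summands through the induction and the justification that Lemma~\ref{lem:syzygytensor} extends cleanly to the range of negative syzygy exponents needed when $m \geq 2$. Everything else is a direct appeal to identifications already in place in the paper.
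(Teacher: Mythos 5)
Your proposal is correct and follows essentially the same route as the paper: identify the $m$-staircase module with $\Omega^{1-m}(\kk)$, use Lemma~\ref{lem:syzygytensor} to conclude $V^{\otimes n}$ agrees with $\Omega^{n(1-m)}(\kk)$ up to projective summands, and then invoke Lemma~\ref{lem:projReven} together with the indecomposability and odd dimension of the staircase module. Your explicit induction and attention to the negative-exponent (cosyzygy) case is just a more careful spelling-out of the step the paper performs in one application of the lemma.
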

\begin{proof}
We have that $V = \Omega^{1-m}(\kk)$. Then $V^{\otimes n} \cong (\Omega^{1-m}(\kk))^{\otimes n}$. By Lemma~\ref{lem:syzygytensor}, this is isomorphic, up to projective summands, to $\Omega^{(1-m)n}(\kk^{\otimes n}) = \Omega^{n-mn}(\kk)$. By Lemma~\ref{lem:projReven}, all projective representations are even dimensional, so $V_n$, the odd-dimensional summand of $V^{\otimes n}$, is also the odd-dimensional summand of $\Omega^{n-mn}(\kk)$. However, $\Omega^{n-mn}(\kk)$ is the $(mn-n+1)$-staircase monomial representation, which has odd-dimension and is indecomposable. Thus $V_n$ is the $(mn-n+1)$-staircase monomial representation.
\end{proof}

It immediately follows that Conjecture~\ref{conj:tensorpowers} is satisfied in this case.

\begin{corollary}
    If $V$ is the $m$-staircase module, then the function $P_V(n)$ is given by the linear polynomial $(2m-2)n+1$.
\end{corollary}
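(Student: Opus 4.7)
The plan is essentially to combine the preceding proposition (which identifies $V_n$ as a specific staircase representation) with a direct dimension count of the staircase modules. By the preceding proposition, $V_n$ is the $(mn-n+1)$-staircase monomial representation, so it remains only to compute the dimension of a general $k$-staircase as a function of $k$.

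For the dimension count, I would appeal directly to the definition of the $k$-staircase as the monomial representation corresponding to $\lambda/\mu = (k,k-1,\dots,1)/(k-2,k-3,\dots,1)$. The dimension equals the number of cells in the skew Young diagram, which is
\[
|\lambda| - |\mu| = \frac{k(k+1)}{2} - \frac{(k-2)(k-1)}{2} = \frac{(k^2+k) - (k^2-3k+2)}{2} = 2k-1.
\]
Thus every $k$-staircase has dimension $2k-1$.

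Substituting $k = mn-n+1$ then gives
\[
P_V(n) = \dim V_n = 2(mn-n+1)-1 = (2m-2)n+1,
\]
which is the desired linear polynomial. The only mild obstacle is verifying the edge cases $k=1$ and $k=2$ (where $\mu$ degenerates), but both satisfy $\dim = 2k-1$ directly ($k=1$ gives a single cell, $k=2$ gives the two-cell ``domino''), so the formula holds uniformly.
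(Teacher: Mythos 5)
Your proposal is correct and takes essentially the same route as the paper, which treats the corollary as an immediate consequence of the proposition that $V_n$ is the $(mn-n+1)$-staircase together with the count that a $k$-staircase has $\frac{k(k+1)}{2}-\frac{(k-2)(k-1)}{2}=2k-1$ cells. One small slip in your edge-case aside: the $2$-staircase is $(2,1)/\varnothing$, an L-shape with $3=2\cdot 2-1$ cells rather than a two-cell domino, but your general formula already handles this case, so the argument is unaffected.
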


\begin{remark}
    It is shown in~\cite{benson} that Conjecture~\ref{conj:p2div4} is true for $G=\mathbb{Z}/2\mathbb{Z} \times \mathbb{Z}/2\mathbb{Z}$ in general.
\end{remark}

\subsection{$(4,1)$ Monomial representation}\label{subsec:graded41rep}

Let $V$ be the monomial representation of $\mathbb{Z}/2\mathbb{Z} \times \mathbb{Z}/4\mathbb{Z}$ (or $\alpha(1,2)$) corresponding to the partition $(4,1)$.

\begin{proposition}\label{prop:41decomp}
We have the following decomposition into indecomposable summands:
$$V_{2k} \otimes V = V_{2k+1} \oplus \underbrace{F \oplus \cdots \oplus F}_{4k \text{\normalfont\ copies}} \quad \text{ and } \quad V_{2k-1} \otimes V = V_{2k} \oplus W \oplus W \oplus \underbrace{F \oplus \cdots \oplus F}_{4k-3 \text{\normalfont\ copies}},$$
where $F$ is a free module of dimension $8$ and $W$ is dimension $4$. 
\end{proposition}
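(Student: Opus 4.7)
The plan is to prove both decompositions by induction on $n$, simultaneously constructing an explicit graded basis for $V_n$ and exhibiting each summand of $V \otimes V_n$ via explicit generators. Because $\kk\alpha(1,2)$ is local of dimension $8$ with unique projective indecomposable equal to the regular representation, every free summand $F$ of dimension $8$ must be a single copy of the regular module; producing a copy of $F$ therefore reduces to exhibiting a generator $w \in V \otimes V_n$ for which $xy^3 w \neq 0$, so that the cyclic submodule $\kk\alpha(1,2) \cdot w$ is free of rank $1$. The $4$-dimensional module $W$ would be identified in advance as a specific non-projective indecomposable, presumably a small monomial module whose action of $x$ and $y$ is easy to describe.

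First I would give an explicit combinatorial model for $V_n$, presumably as a graded module for $\alpha(1,2)$ whose basis is indexed by a skew diagram depending on the parity of $n$ with $4n+1$ cells, and verify this model inductively by identifying $V_{n+1}$ inside $V \otimes V_n$. Using the comultiplication $\Delta(x) = x \otimes 1 + 1 \otimes x$, $\Delta(y) = y \otimes 1 + 1 \otimes y$, and the $\mathbb{Z}^2$-grading on $\kk\alpha(1,2)$ from Definition~\ref{grading}, the action on the tensor basis of $V \otimes V_n$ is completely explicit, and the grading strongly restricts the possible graded components of each candidate summand. In the inductive step, I would exhibit: (i) an indecomposable odd-dimensional submodule of $V \otimes V_n$ of dimension $4n+5$ realizing $V_{n+1}$; (ii) in the odd case $n = 2k-1$, two distinguished cyclic submodules of dimension $4$ isomorphic to $W$; and (iii) the required number of cyclic free summands of dimension $8$. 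Independence of these submodules can be checked by verifying that the chosen generators project nontrivially and independently into each relevant graded component of a suitable quotient.

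Dimensional accounting, $5(8k+1) = (8k+5) + 8\cdot 4k$ in the even case and $5(8k-3) = (8k+1) + 2\cdot 4 + 8\cdot(4k-3)$ in the odd case, together with the Krull--Schmidt theorem, then closes the argument. The main obstacle is that the number of free summands grows linearly with $k$: one must produce a uniform family of generators for the copies of $F$ (parametrized, for example, by cells in the monomial diagram of $V_n$) rather than exhibiting them one at a time. Confirming that this family is simultaneously independent, and that together with $V_{n+1}$ and the two copies of $W$ in the odd case it exhausts $V \otimes V_n$ as an internal direct sum, constitutes the technical crux of the proof; dimension counts alone only ensure that the claimed decomposition has the right total dimension, not that the chosen submodules genuinely split off.
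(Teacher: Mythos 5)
Your proposal follows essentially the same route as the paper: the paper proves Proposition~\ref{prop:41decomp} via Lemma~\ref{lem:41partition}, which inductively gives an explicit graded basis for $V_{2k}$ and $V_{2k-1}$ and exhibits every summand of $V_{2k}\otimes V$ and $V_{2k-1}\otimes V$ by explicit generators in each homogeneous component, then checks that each diagram is closed under $x$ and $y$, connected (hence indecomposable), and that the summands span each graded piece of the tensor product. Your cyclic-generator criterion for recognizing the free summands is a mild streamlining of the paper's explicit $8$-dimensional diagrams, but the overall induction and the spanning/independence verification you identify as the crux are exactly what the paper carries out.
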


In order to prove this, we explicitly give the decomposition, via the following stronger lemma.

\begin{lemma}\label{lem:41partition}
We claim the following.
\begin{enumerate}[leftmargin=*]
    \item The representation $V_{2k}$ is the direct sum of $1$-dimensional homogeneous components $V_{4k,2k}$ and $V_{2k-1+i,j}$ where $j=6k-1-2i, \dots, 6k+2-2i$ for $i=1,3,5, \dots, 2k-1$ and\\ $j=6k-2i, \dots, 6k+3-2i$ for $i=2,4,6,\dots,2k$.
    
    \item The representation $V_{2k-1}$ is the direct sum of $1$-dimensional homogeneous components $V_{4k-2,2k-1}$ and $V_{2k-2+i,j}$ where $j=6k-3-2i, \dots, 6k-2i$ for $i=1,3,5,\dots, 2k-1$ and\\ $j=6k-4-2i,\dots,6k-1-2i$ for $i=2,4,6,\dots,2k-2$.
    
    \item The representation $V_{2k} \otimes V$ decomposes into the summands whose graded monomial diagrams are shown in Table~\ref{tab:41part-lem-3}.

    \item The representation $V_{2k-1} \otimes V$ decomposes into the summands whose graded monomial diagrams are shown in Table~\ref{tab:41part-lem-4}.
\end{enumerate}
\end{lemma}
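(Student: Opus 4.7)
The plan is strong induction on $k$, proving the four claims simultaneously. The base case $k=1$ makes part~(2) into the statement that $V_1 = V$ has its stated basis $v_{2,1}, v_{1,1}, \dots, v_{1,4}$ in the correct degrees, which is the definition of $V$, and parts~(1), (3), (4) at $k=1$ reduce to direct computation of $V^{\otimes 2}$ and $V_2 \otimes V$.

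For the inductive step, the principal tool is the $\mathbb{Z}^2$-grading on $\kk\alpha(1,2)$ from Definition~\ref{grading}: since $x$ and $y$ are primitive in degrees $(1,0)$ and $(0,1)$, every tensor product inherits a $\mathbb{Z}^2$-grading compatible with the action, and every direct summand is automatically a graded submodule. Assuming (2) at level $k$, I compute $V_{2k-1}\otimes V$ by writing out the $5(8k-3) = 40k-15$ homogeneous basis vectors $w\otimes v$ together with the Leibniz-rule action of $x$ and $y$. I then propose explicit homogeneous bases for the summands claimed in~(4): a candidate of dimension $8k+1$ whose set of degrees matches the description of $V_{2k}$ in~(1); two submodules $W$ of dimension $4$; and $4k-3$ copies of the regular module $F \cong \kk\alpha(1,2)$ of dimension $8$. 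Verification reduces to (i) checking that each candidate is closed under $x$ and $y$, which is heavily constrained by degree considerations, (ii) the dimension count $(8k+1)+2\cdot 4+(4k-3)\cdot 8 = 40k-15$, and (iii) showing the $(8k+1)$-dimensional candidate is indecomposable, most easily done by connectedness of its graded diagram in the sense of Section~\ref{subsec:monomialdefandbasicres}; by uniqueness of the odd-dimensional summand this candidate must then be $V_{2k}$, giving~(1). The same method applied to $V_{2k}\otimes V$, using~(1), then produces~(3) and yields~(2) at $k+1$.

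The main obstacle is the explicit bookkeeping: producing homogeneous bases for the two copies of $W$ and the $4k-3$ copies of the free module $F$ that together with $V_{2k}$ exhaust $V_{2k-1}\otimes V$ as a direct sum. For each copy of $F$, one must exhibit a single homogeneous generator $g$ such that $\{x^a y^b\cdot g : 0\le a\le 1,\ 0\le b\le 3\}$ is linearly independent and spans the summand disjointly from the rest of the decomposition. The grading provides strong constraints (each degree class is quite small, and the action must shift degrees in the prescribed way), but assembling these generators into a single consistent decomposition for each of the two parities of $k$, and repeating the procedure for $V_{2k}\otimes V$, is the technical heart of the argument; this data is presumably exactly what is recorded in the diagrams of Table~\ref{tab:41part-lem-3} and Table~\ref{tab:41part-lem-4}.
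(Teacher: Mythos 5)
Your proposal matches the paper's proof in essentially every respect: induction starting from the $k=1$ case of (2), explicit homogeneous bases for the summands of $V_{2k-1}\otimes V$ and $V_{2k}\otimes V$ (the content of Tables~\ref{tab:41part-lem-4} and~\ref{tab:41part-lem-3}), closure under $x$ and $y$ checked degreewise, indecomposability via connectedness of the graded diagrams, a spanning/dimension count, and then reading off (1) and (2) from the degrees of the constructed odd-dimensional summand. The only cosmetic caveat is your aside that every direct summand is automatically graded, which is false in general but harmless here since the decomposition is exhibited with explicitly graded summands.
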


\begin{table}[H]
\centering
\caption{Indecomposable summands of $V_{2k} \otimes V$.}
\label{tab:41part-lem-3}
\scalebox{0.75}{
\makebox[\textwidth][c]{
\begin{tabular}{cccccc}
    \hline
    \multicolumn{3}{|c|}{$V_{2k+1}$ summand:} & \multicolumn{3}{|c|}{Dimension $8$ Family 1 summand:}
    \\\hline
    \multicolumn{3}{|c|}{
    \begin{tikzpicture}
        \draw (0,0) rectangle node{$v_{2k,6k} \otimes v_{1,1}$} ++(2.2,0.85);
        \draw (0,0.85) rectangle node{$v_{2k,6k} \otimes v_{1,2}$} ++(2.2,0.85);
        \draw (0,1.7) rectangle node{$v_{2k,6k} \otimes v_{1,3}$} ++(2.2,0.85);
        \draw (0,2.55) rectangle node{$v_{2k,6k} \otimes v_{1,4}$} ++(2.2,0.85);
        \draw (2.2,0.85) -- (3.2,0.85) -- (3.2,0) -- (4.2,0) -- (4.2,-2.5) -- (5.2,-2.5) -- (5.2,-3.35) -- (3.2,-3.35) -- (3.2,-2.5) -- (2.2,-2.5) -- (2.2,0);
        \node at (3.2,-1.25) {$V_{2k} \otimes v_{2,1}$};
    \end{tikzpicture}
    }
    &
    \multicolumn{3}{|c|}{
    \begin{tikzpicture}
        \draw (0,0) rectangle node{$v_{2k-1+i,6k-2i} \otimes v_{1,1}$} ++(4.15,1.25);
        \draw (4.15,0) rectangle node{$\begin{aligned}&v_{2k+i,6k-2i} \otimes v_{1,1}\\&+ v_{2k-1+i,6k-2i} \otimes v_{2,1}\end{aligned}$} ++(4,1.25);
        \draw (0,1.25) rectangle node{$\begin{aligned}&v_{2k-1+i,6k+1-2i}\otimes v_{1,1}\\&+ v_{2k-1+i,6k-2i}\otimes v_{1,2}\end{aligned}$} ++(4.15,1.25);
        \draw (4.15,1.25) rectangle node{$\begin{aligned}&v_{2k-1+i,6k+1-2i} \otimes v_{2,1}\\&+v_{2k+i,6k-2i}\otimes v_{1,2}\end{aligned}$} ++(4,1.25);
        \draw (0,2.5) rectangle node{$\begin{aligned}&v_{2k-1+i,6k+2-2i} \otimes v_{1,1}\\&+v_{2k-1+i,6k-2i}\otimes v_{1,3}\end{aligned}$} ++(4.15,1.25);
        \draw (4.15,2.5) rectangle node{$\begin{aligned}&v_{2k-1+i,6k+2-2i} \otimes v_{2,1}\\&+v_{2k+i,6k-2i}\otimes v_{1,3}\end{aligned}$} ++(4,1.25);
        \draw (0,3.75) rectangle node{$\begin{aligned}&v_{2k-1+i,6k+3-2i} \otimes v_{1,1}\\&+v_{2k-1+i,6k+2-2i}\otimes v_{1,2}\\&+v_{2k-1+i,6k+1-2i}\otimes v_{1,3}\\&+v_{2k-1+i,6k-2i}\otimes v_{1,4}\end{aligned}$} ++(4.15,2.5);
        \draw (4.15,3.75) rectangle node{$\begin{aligned}&v_{2k-1+i,6k+3-2i} \otimes v_{2,1}\\&+v_{2k+i,6k-2i}\otimes v_{1,4}\end{aligned}$} ++(4,2.5);
    \end{tikzpicture}
    }
    \\
    \multicolumn{3}{|c|}{} & \multicolumn{3}{|c|}{for $i=2,4,6,\dots,2k$}
    \\\hline
    \multicolumn{2}{|c|}{Dimension $8$ Family 2 summand:} & \multicolumn{2}{|c|}{Dimension $8$ Family 3 summand:} & \multicolumn{2}{|c|}{Dimension $8$ Family 4 summand:}
    \\\hline
    \multicolumn{2}{|c|}{
    \begin{tikzpicture}
        \draw (0,0) rectangle node{$v_{2k-1+i,6k-1-2i} \otimes v_{1,1}$} ++(4.15,1.2);
        \draw (4.15,0) rectangle node{$\begin{aligned}&v_{2k+i,6k-1-2i} \otimes v_{1,1}\\&+ v_{2k-1+i,6k-1-2i} \otimes v_{2,1}\end{aligned}$} ++(4.15,1.2);
        \draw (0,1.2) rectangle node{$\begin{aligned}&v_{2k-1+i,6k-2i}\otimes v_{1,1}\\&+ v_{2k-1+i,6k-1-2i}\otimes v_{1,2}\end{aligned}$} ++(4.15,1.5);
        \draw (4.15,1.2) rectangle node{$\begin{aligned}&v_{2k+i,6k-1-2i} \otimes v_{1,2}\\&+v_{2k+i,6k-2i}\otimes v_{1,1}\\&+v_{2k-1+i,6k-2i}\otimes v_{2,1}\end{aligned}$} ++(4.15,1.5);
        \draw (0,2.7) rectangle node{$\begin{aligned}&v_{2k-1+i,6k+1-2i} \otimes v_{1,1}\\&+v_{2k-1+i,6k-1-2i}\otimes v_{1,3}\end{aligned}$} ++(4.15,1.5);
        \draw (4.15,2.7) rectangle node{$\begin{aligned}&v_{2k+i,6k+1-2i} \otimes v_{1,1}\\&+v_{2k+i,6k-1-2i}\otimes v_{1,3}\\&+v_{2k-1+i,6k+1-2i}\otimes v_{2,1}\end{aligned}$} ++(4.15,1.5);
        \draw (0,4.2) rectangle node{$\begin{aligned}&v_{2k-1+i,6k+2-2i} \otimes v_{1,1}\\&+v_{2k-1+i,6k+1-2i}\otimes v_{1,2}\\&+v_{2k-1+i,6k-2i}\otimes v_{1,3}\\&+v_{2k-1+i,6k-1-2i}\otimes v_{1,4}\end{aligned}$} ++(4.15,2);
        \draw (4.15,4.2) rectangle node{$\begin{aligned}&v_{2k-1+i,6k+2-2i} \otimes v_{2,1}\\&+v_{2k+i,6k+1-2i}\otimes v_{1,2}\\&+v_{2k+i,6k-2i}\otimes v_{1,3}\\&+v_{2k+i,6k-1-2i}\otimes v_{1,4}\end{aligned}$} ++(4.15,2);
    \end{tikzpicture}
    }
    &
    \multicolumn{2}{|c|}{
    \begin{tikzpicture}
        \draw (0,0) rectangle node{$v_{2k-1+i,6k-2i} \otimes v_{1,1}$} ++(4.15,1.2);
        \draw (4.15,0) rectangle node{$\begin{aligned}&v_{2k+i,6k-2i} \otimes v_{1,1}\\&+ v_{2k-1+i,6k-2i} \otimes v_{2,1}\end{aligned}$} ++(4.15,1.2);
        \draw (0,1.2) rectangle node{$\begin{aligned}&v_{2k-1+i,6k+1-2i}\otimes v_{1,1}\\&+ v_{2k-1+i,6k-2i}\otimes v_{1,2}\end{aligned}$} ++(4.15,1.5);
        \draw (4.15,1.2) rectangle node{$\begin{aligned}&v_{2k+i,6k+1-2i} \otimes v_{1,1}\\&+v_{2k+i,6k-2i}\otimes v_{1,2}\\&+v_{2k-1+i,6k+1-2i}\otimes v_{2,1}\end{aligned}$} ++(4.15,1.5);
        \draw (0,2.7) rectangle node{$\begin{aligned}&v_{2k-1+i,6k+2-2i} \otimes v_{1,1}\\&+v_{2k-1+i,6k-2i}\otimes v_{1,3}\end{aligned}$} ++(4.15,1.25);
        \draw (4.15,2.7) rectangle node{$\begin{aligned}&v_{2k+i,6k-2i} \otimes v_{1,3}\\&+v_{2k-1+i,6k+2-2i}\otimes v_{2,1}\end{aligned}$} ++(4.15,1.25);
        \draw (0,3.95) rectangle node{$\begin{aligned}&v_{2k-1+i,6k+2-2i} \otimes v_{1,2}\\&+v_{2k-1+i,6k+1-2i}\otimes v_{1,3}\\&+v_{2k-1+i,6k-2i}\otimes v_{1,4}\end{aligned}$} ++(4.15,2);
        \draw (4.15,3.95) rectangle node{$\begin{aligned}&v_{2k+i,6k+1-2i} \otimes v_{1,3}\\&+v_{2k+i,6k-2i}\otimes v_{1,4}\end{aligned}$} ++(4.15,2);
    \end{tikzpicture}
    }
    &
    \multicolumn{2}{|c|}{
    \begin{tikzpicture}
        \draw (0,0) rectangle node{$v_{2k-1+i,6k+1-2i} \otimes v_{1,1}$} ++(4.15,1.2);
        \draw (4.15,0) rectangle node{$\begin{aligned}&v_{2k+i,6k+1-2i} \otimes v_{1,1}\\&+ v_{2k-1+i,6k+1-2i} \otimes v_{2,1}\end{aligned}$} ++(4.15,1.2);
        \draw (0,1.2) rectangle node{$\begin{aligned}&v_{2k-1+i,6k+2-2i}\otimes v_{1,1}\\&+ v_{2k-1+i,6k+1-2i}\otimes v_{1,2}\end{aligned}$} ++(4.15,1.5);
        \draw (4.15,1.2) rectangle node{$\begin{aligned}&v_{2k+i,6k+1-2i} \otimes v_{1,2}\\&+v_{2k-1+i,6k+2-2i}\otimes v_{2,1}\end{aligned}$} ++(4.15,1.5);
        \draw (0,2.7) rectangle node{$v_{2k-1+i,6k+1-2i} \otimes v_{1,3}$} ++(4.15,1.25);
        \draw (4.15,2.7) rectangle node{$v_{2k+i,6k+1-2i} \otimes v_{1,3}$} ++(4.15,1.25);
        \draw (0,3.95) rectangle node{$\begin{aligned}&v_{2k-1+i,6k+2-2i} \otimes v_{1,3}\\&+v_{2k-1+i,6k+1-2i}\otimes v_{1,4}\end{aligned}$} ++(4.15,1.5);
        \draw (4.15,3.95) rectangle node{$v_{2k+i,6k+1-2i} \otimes v_{1,4}$} ++(4.15,1.5);
    \end{tikzpicture}
    }
    \\
    \multicolumn{2}{|c|}{for $i=1,3,5,\dots, 2k-1$} & \multicolumn{2}{|c|}{for $i=1,3,5,\dots, 2k-1$} & \multicolumn{2}{|c|}{for $i=1,3,5,\dots, 2k-1$}
    \\\hline
\end{tabular}
}
}
\end{table}

\begin{table}[H]
    \centering
    \caption{Indecomposable summands of $V_{2k-1}\otimes V$.}
    \label{tab:41part-lem-4}
    \scalebox{0.75}{
    \makebox[\textwidth][c]{
    \begin{tabular}{|c|cc|c|}
    \hline
    $V_{2k}$ summand: & $W_1$ summand: & \multicolumn{1}{|c|}{$W_2$ summand:} & Dimension $8$ Family 1 summand:
    \\\hline
    \begin{tikzpicture}
        \draw (0,0) rectangle node{$v_{2k-1,6k-4} \otimes v_{1,1}$} ++(3.5,0.75);
        \draw (0,0.75) rectangle node{$\begin{aligned}&v_{2k-1,6k-4} \otimes v_{1,2}\\&+v_{2k-1,6k-3}\otimes v_{1,1}\end{aligned}$} ++(3.5,1.25);
        \draw (0,2) rectangle node{$\begin{aligned}&v_{2k-1,6k-4}\otimes v_{1,3}\\&+v_{2k-1,6k-2}\otimes v_{1,1}\end{aligned}$} ++(3.5,1.25);
        \draw (0,3.25) rectangle node{$\begin{aligned}&v_{2k-1,6k-2}\otimes v_{1,2}\\&+v_{2k-1,6k-3}\otimes v_{1,3}\\&+v_{2k-1,6k-4}\otimes v_{1,4}\end{aligned}$} ++(3.5,1.75);
        \draw (3.5,3.25) -- (4.5,3.25) -- (4.5,0) -- (5.5,0) -- (5.5,-0.75) -- (6.5,-0.75) -- (6.5,-3) -- (7.5,-3) -- (7.5,-3.75) -- (5.5,-3.75) -- (5.5,-3) -- (4.5,-3) -- (4.5,-0.75) -- (3.5,-0.75) -- (3.5,0);
        \node at (5.5,-1.75) {$V_{2k-1} \otimes v_{2,1}$};
    \end{tikzpicture}
    &
    \begin{tikzpicture}
        \draw (0,0) rectangle node{$v_{2k-1,6k-4} \otimes v_{1,2}$} ++(3.5,1.5);
        \draw (0,1.5) rectangle node{$\begin{aligned}&v_{2k-1,6k-3}\otimes v_{1,2}\\&+v_{2k-1,6k-4}\otimes v_{1,3}\end{aligned}$} ++(3.5,1.5);
        \draw (0,3) rectangle node{$\begin{aligned}&v_{2k-1,6k-2}\otimes v_{1,2}\\&+v_{2k-1,6k-4}\otimes v_{1,4}\end{aligned}$} ++(3.5,1.5);
        \draw (0,4.5) rectangle node{$\begin{aligned}&v_{2k-1,6k-2}\otimes v_{1,3}\\&+v_{2k-1,6k-3}\otimes v_{1,4}\end{aligned}$} ++(3.5,1.5);
    \end{tikzpicture}
    &
    \multicolumn{1}{|c|}{
    \begin{tikzpicture}
        \draw (0,0) rectangle node{$v_{2k-1,6k-3}\otimes v_{1,2}$} ++(3.5,1.5);
        \draw (0,1.5) rectangle node{$\begin{aligned}&v_{2k-1,6k-2}\otimes v_{1,2}\\&+v_{2k-1,6k-3}\otimes v_{1,3}\end{aligned}$} ++(3.5,1.5);
        \draw (0,3) rectangle node{$v_{2k-1,6k-3}\otimes v_{1,4}$} ++(3.5,1.5);
        \draw (0,4.5) rectangle node{$v_{2k-1,6k-2} \otimes v_{1,4}$} ++(3.5,1.5);
    \end{tikzpicture}
    }
    &
    \begin{tikzpicture}
        \draw (0,0) rectangle node{$v_{2k-2+i,6k-3-2i} \otimes v_{1,1}$} ++(4.15,1.5);
        \draw (4.15,0) rectangle node{$\begin{aligned}&v_{2k-1+i,6k-3-2i} \otimes v_{1,1}\\&+ v_{2k-2+i,6k-3-2i} \otimes v_{2,1}\end{aligned}$} ++(4.15,1.5);
        \draw (0,1.5) rectangle node{$\begin{aligned}&v_{2k-2+i,6k-2-2i}\otimes v_{1,1}\\&+ v_{2k-2+i,6k-3-2i}\otimes v_{1,2}\end{aligned}$} ++(4.15,1.5);
        \draw (4.15,1.5) rectangle node{$\begin{aligned}&v_{2k-2+i,6k-2-2i} \otimes v_{2,1}\\&+v_{2k-1+i,6k-3-2i}\otimes v_{1,2}\end{aligned}$} ++(4.15,1.5);
        \draw (0,3) rectangle node{$\begin{aligned}&v_{2k-2+i,6k-1-2i} \otimes v_{1,1}\\&+v_{2k-2+i,6k-3-2i}\otimes v_{1,3}\end{aligned}$} ++(4.15,1.5);
        \draw (4.15,3) rectangle node{$\begin{aligned}&v_{2k-2+i,6k-1-2i} \otimes v_{2,1}\\&+v_{2k-1+i,6k-3-2i}\otimes v_{1,3}\end{aligned}$} ++(4.15,1.5);
        \draw (0,4.5) rectangle node{$\begin{aligned}&v_{2k-2+i,6k-2i} \otimes v_{1,1}\\&+v_{2k-2+i,6k-1-2i}\otimes v_{1,2}\\&+v_{2k-2+i,6k-2-2i}\otimes v_{1,3}\\&+v_{2k-2+i,6k-3-2i}\otimes v_{1,4}\end{aligned}$} ++(4.15,3);
        \draw (4.15,4.5) rectangle node{$\begin{aligned}&v_{2k-2+i,6k-2i} \otimes v_{2,1}\\&+v_{2k-1+i,6k-3-2i}\otimes v_{1,4}\end{aligned}$} ++(4.15,3);
    \end{tikzpicture}
    \\
    & & \multicolumn{1}{|c|}{} & for $i=1,3,5,\dots,2k-1$
    \\\hline
    Dimension $8$ Family 2 summand: & \multicolumn{2}{|c|}{Dimension $8$ Family 3 summand:} & Dimension $8$ Family 4 summand:
    \\\hline
    \begin{tikzpicture}
        \draw (0,0) rectangle node{$v_{2k-2+i,6k-4-2i} \otimes v_{1,1}$} ++(4.15,1.25);
        \draw (4.15,0) rectangle node{$\begin{aligned}&v_{2k-1+i,6k-4-2i} \otimes v_{1,1}\\&+ v_{2k-2+i,6k-4-2i} \otimes v_{2,1}\end{aligned}$} ++(4.15,1.25);
        \draw (0,1.25) rectangle node{$\begin{aligned}&v_{2k-2+i,6k-3-2i}\otimes v_{1,1}\\&+ v_{2k-2+i,6k-4-2i}\otimes v_{1,2}\end{aligned}$} ++(4.15,2);
        \draw (4.15,1.25) rectangle node{$\begin{aligned}&v_{2k-1+i,6k-3-2i} \otimes v_{1,1}\\&+v_{2k-1+i,6k-4-2i}\otimes v_{1,2}\\&+v_{2k-2+i,6k-3-2i}\otimes v_{2,1}\end{aligned}$} ++(4.15,2);
        \draw (0,3.25) rectangle node{$\begin{aligned}&v_{2k-2+i,6k-2-2i} \otimes v_{1,1}\\&+v_{2k-2+i,6k-4-2i}\otimes v_{1,3}\end{aligned}$} ++(4.15,2);
        \draw (4.15,3.25) rectangle node{$\begin{aligned}&v_{2k-1+i,6k-2-2i} \otimes v_{1,1}\\&+v_{2k-1+i,6k-4-2i}\otimes v_{1,3}\\&+v_{2k-2+i,6k-2-2i}\otimes v_{2,1}\end{aligned}$} ++(4.15,2);
        \draw (0,5.25) rectangle node{$\begin{aligned}&v_{2k-2+i,6k-1-2i} \otimes v_{1,1}\\&+v_{2k-2+i,6k-2-2i}\otimes v_{1,2}\\&+v_{2k-2+i,6k-3-2i}\otimes v_{1,3}\\&+v_{2k-2+i,6k-4-2i}\otimes v_{1,4}\end{aligned}$} ++(4.15,2.65);
        \draw (4.15,5.25) rectangle node{$\begin{aligned}&v_{2k-2+i,6k-1-2i} \otimes v_{2,1}\\&+v_{2k-1+i,6k-2-2i}\otimes v_{1,2}\\&+v_{2k-1+i,6k-3-2i}\otimes v_{1,3}\\&+v_{2k-1+i,6k-4-2i}\otimes v_{1,4}\end{aligned}$} ++(4.15,2.65);
    \end{tikzpicture}
    &
    \multicolumn{2}{|c|}{
    \begin{tikzpicture}
        \draw (0,0) rectangle node{$v_{2k-2+i,6k-3-2i} \otimes v_{1,1}$} ++(4.15,1.5);
        \draw (4.15,0) rectangle node{$\begin{aligned}&v_{2k-1+i,6k-3-2i} \otimes v_{1,1}\\&+ v_{2k-2+i,6k-3-2i} \otimes v_{2,1}\end{aligned}$} ++(4.15,1.5);
        \draw (0,1.5) rectangle node{$\begin{aligned}&v_{2k-2+i,6k-2-2i}\otimes v_{1,1}\\&+ v_{2k-2+i,6k-3-2i}\otimes v_{1,2}\end{aligned}$} ++(4.15,2.25);
        \draw (4.15,1.5) rectangle node{$\begin{aligned}&v_{2k-1+i,6k-2-2i} \otimes v_{1,1}\\&+v_{2k-1+i,6k-3-2i}\otimes v_{1,2}\\&+v_{2k-2+i,6k-2-2i}\otimes v_{2,1}\end{aligned}$} ++(4.15,2.25);
        \draw (0,3.75) rectangle node{$\begin{aligned}&v_{2k-2+i,6k-1-2i} \otimes v_{1,1}\\&+v_{2k-2+i,6k-3-2i}\otimes v_{1,3}\end{aligned}$} ++(4.15,1.5);
        \draw (4.15,3.75) rectangle node{$\begin{aligned}&v_{2k-1+i,6k-3-2i} \otimes v_{1,3}\\&+v_{2k-2+i,6k-1-2i}\otimes v_{2,1}\end{aligned}$} ++(4.15,1.5);
        \draw (0,5.25) rectangle node{$\begin{aligned}&v_{2k-2+i,6k-1-2i} \otimes v_{1,2}\\&+v_{2k-2+i,6k-2-2i}\otimes v_{1,3}\\&+v_{2k-2+i,6k-3-2i}\otimes v_{1,4}\end{aligned}$} ++(4.15,2.25);
        \draw (4.15,5.25) rectangle node{$\begin{aligned}&v_{2k-1+i,6k-2-2i} \otimes v_{1,3}\\&+v_{2k-1+i,6k-3-2i}\otimes v_{1,4}\end{aligned}$} ++(4.15,2.25);
    \end{tikzpicture}
    }
    &
    \begin{tikzpicture}
        \draw (0,0) rectangle node{$v_{2k-2+i,6k-2-2i} \otimes v_{1,1}$} ++(4.15,1.5);
        \draw (4.15,0) rectangle node{$\begin{aligned}&v_{2k-1+i,6k-2-2i} \otimes v_{1,1}\\&+ v_{2k-2+i,6k-2-2i} \otimes v_{2,1}\end{aligned}$} ++(4.15,1.5);
        \draw (0,1.5) rectangle node{$\begin{aligned}&v_{2k-2+i,6k-1-2i}\otimes v_{1,1}\\&+ v_{2k-2+i,6k-2-2i}\otimes v_{1,2}\end{aligned}$} ++(4.15,1.5);
        \draw (4.15,1.5) rectangle node{$\begin{aligned}&v_{2k-1+i,6k-2-2i} \otimes v_{1,2}\\&+v_{2k-2+i,6k-1-2i}\otimes v_{2,1}\end{aligned}$} ++(4.15,1.5);
        \draw (0,3) rectangle node{$v_{2k-2+i,6k-2-2i} \otimes v_{1,3}$} ++(4.15,1.5);
        \draw (4.15,3) rectangle node{$v_{2k-1+i,6k-2-2i} \otimes v_{1,3}$} ++(4.15,1.5);
        \draw (0,4.5) rectangle node{$\begin{aligned}&v_{2k-2+i,6k-1-2i} \otimes v_{1,3}\\&+v_{2k-2+i,6k-2-2i}\otimes v_{1,4}\end{aligned}$} ++(4.15,1.5);
        \draw (4.15,4.5) rectangle node{$v_{2k-1+i,6k-2-2i} \otimes v_{1,4}$} ++(4.15,1.5);
    \end{tikzpicture}
    \\
    for $i=2,4,6,\dots,2k-2$ & \multicolumn{2}{|c|}{for $i=2,4,6,\dots,2k-2$} & for $i=2,4,6,\dots,2k-2$
    \\\hline
    \end{tabular}
    }
    }
\end{table}

\begin{proof}
We proceed by induction. It can be checked that (2) is true for $k=1$. 

For the inductive step, assume (2) is true. Consider the decomposition given in (4). First, we show that each of these summands are subrepresentations. It can be checked in each case that the vectors chosen in the diagrams above are chosen such that the action of $x$ takes the displayed vector to the vector in the box adjacent to the right (or $0$ if that box does not exist) and the action of $y$ takes the displayed vector to the vector in the box adjacent above (or $0$ if that box does not exist). Since each vector is a basis for the homogeneous component it is in, then each diagram is closed under the action of $x$ and $y$, and all summands are subrepresentations.

Each of the claimed summands is indecomposable, since all of the diagrams are connected.

Finally, we must show that the direct sum of the indecomposable subrepresentations shown is the original representation $V_{2k-1}\otimes V$. First, we must know where the summands are located relative to each other. Figure~\ref{fig:41par-location} shows the dimensions of the homogeneous components of $V$, $V_1\otimes V$, $V_2\otimes V$, $V_3\otimes V$, and $V_4 \otimes V$, respectively from left to right. The black outline is the location of $V_i$, the diamonds show the positions of the bottom left cells of the dimension $8$ free modules, and the circles show the positions of the bottom cells of the dimension $4$ summands, as shown in the key. This pattern is generalized to higher $n$.

\begin{figure}[h]
     \centering
     \caption{Positions of the indecomposable summands in the first five tensor powers.}
     \label{fig:41par-location}
     \begin{subfigure}[H]{0.18\textwidth}
         \centering
         \begin{tikzpicture}[scale=0.45]
            \node at (3,0) {$1$};
            \node at (2,0) {$1$};
            \node at (2,1) {$1$};
            \node at (2,2) {$1$};
            \node at (2,3) {$1$};

            \draw[thick] (1.5,-0.5) -- (3.5,-0.5) -- (3.5,0.5) -- (2.5,0.5) -- (2.5,3.5) -- (1.5,3.5) -- (1.5,-0.5);
         \end{tikzpicture}
     \end{subfigure}
     \begin{subfigure}[H]{0.18\textwidth}
         \centering
         \begin{tikzpicture}[scale=0.45]
            \draw (1,2) circle (0.5);
            \draw[fill=lightgray] (1,3) circle (0.5);
         
            \node at (3,0) {$1$};
            \node at (2,0) {$2$};
            \node at (2,1) {$2$};
            \node at (2,2) {$2$};
            \node at (2,3) {$2$};
            
            \node at (1,0) {$1$};
            \node at (1,1) {$2$};
            \node at (1,2) {$3$};
            \node at (1,3) {$4$};
            \node at (1,4) {$3$};
            \node at (1,5) {$2$};
            \node at (1,6) {$1$};

            \draw[thick] (1.5,-0.5) -- (3.5,-0.5) -- (3.5,0.5) -- (2.5,0.5) -- (2.5,3.5) -- (1.5,3.5) -- (1.5,4.5) -- (0.5,4.5) -- (0.5,0.5) -- (1.5,0.5) -- (1.5,-0.5);
            
            \node[draw, shape=diamond, semithick] at (1,0) {};
         \end{tikzpicture}
     \end{subfigure}
     \begin{subfigure}[H]{0.18\textwidth}
         \centering
         \begin{tikzpicture}[scale=0.45]
            \node[draw, shape=diamond, semithick] at (2,0) {};
            \node[draw, fill=lightgray, shape=diamond, semithick] at (1,1) {};
            \node[draw, regular polygon, regular polygon sides=3, scale=0.85, semithick] at (1,2-0.1) {};
            \node[draw, fill=lightgray, regular polygon, regular polygon sides=3, scale=0.85, semithick] at (1,3-0.1) {};
            
            \node at (4,0) {$1$};
            \node at (3,0) {$2$};
            \node at (3,1) {$2$};
            \node at (3,2) {$2$};
            \node at (3,3) {$2$};
        
            \node at (2,0) {$1$};
            \node at (2,1) {$3$};
            \node at (2,2) {$4$};
            \node at (2,3) {$5$};
            \node at (2,4) {$4$};
            \node at (2,5) {$2$};
            \node at (2,6) {$1$};
        
            \node at (1,1) {$1$};
            \node at (1,2) {$2$};
            \node at (1,3) {$3$};
            \node at (1,4) {$4$};
            \node at (1,5) {$3$};
            \node at (1,6) {$2$};
            \node at (1,7) {$1$};
        
            \draw[thick] (2.5,-0.5) -- (4.5,-0.5) -- (4.5,0.5) -- (3.5,0.5) -- (3.5,3.5) -- (2.5,3.5) -- (2.5,4.5) -- (1.5,4.5) -- (1.5,7.5) -- (0.5,7.5) -- (0.5,3.5) -- (1.5,3.5) -- (1.5,0.5) -- (2.5,0.5) -- (2.5,-0.5);
        \end{tikzpicture}
     \end{subfigure}
     \begin{subfigure}[H]{0.18\textwidth}
         \centering
         \begin{tikzpicture}[scale=0.45]
            \draw (0,6) circle (0.5);
            \draw[fill=lightgray] (0,7) circle (0.5);

            \node[draw, shape=diamond, semithick] at (2,0) {};
            \node[draw, fill=lightgray, shape=diamond, semithick] at (1,1) {};
            \node[draw, regular polygon, regular polygon sides=3, scale=0.85, semithick] at (1,2-0.1) {};
            \node[draw, fill=lightgray, regular polygon, regular polygon sides=3, scale=0.85, semithick] at (1,3-0.1) {};
            \node[draw, shape=diamond, semithick] at (0,4) {};
         
            \node at (4,0) {$1$};
            \node at (3,0) {$2$};
            \node at (3,1) {$2$};
            \node at (3,2) {$2$};
            \node at (3,3) {$2$};
        
            \node at (2,0) {$1$};
            \node at (2,1) {$3$};
            \node at (2,2) {$4$};
            \node at (2,3) {$5$};
            \node at (2,4) {$4$};
            \node at (2,5) {$2$};
            \node at (2,6) {$1$};
        
            \node at (1,1) {$1$};
            \node at (1,2) {$2$};
            \node at (1,3) {$3$};
            \node at (1,4) {$5$};
            \node at (1,5) {$4$};
            \node at (1,6) {$3$};
            \node at (1,7) {$2$};
        
            \node at (0,4) {$1$};
            \node at (0,5) {$2$};
            \node at (0,6) {$3$};
            \node at (0,7) {$4$};
            \node at (0,8) {$3$};
            \node at (0,9) {$2$};
            \node at (0,10) {$1$};
        
            \draw[thick] (2.5,-0.5) -- (4.5,-0.5) -- (4.5,0.5) -- (3.5,0.5) -- (3.5,3.5) -- (2.5,3.5) -- (2.5,4.5) -- (1.5,4.5) -- (1.5,7.5) -- (0.5,7.5) -- (0.5,8.5) -- (-0.5,8.5) -- (-0.5,4.5) -- (0.5,4.5) -- (0.5,3.5) -- (1.5,3.5) -- (1.5,0.5) -- (2.5,0.5) -- (2.5,-0.5);
        \end{tikzpicture}
     \end{subfigure}
     \begin{subfigure}[H]{0.18\textwidth}
         \centering
         \begin{tikzpicture}[scale=0.45]
            \node[draw, shape=diamond, semithick] at (2,0) {};
            \node[draw, fill=lightgray, shape=diamond, semithick] at (1,1) {};
            \node[draw, regular polygon, regular polygon sides=3, scale=0.85, semithick] at (1,2-0.1) {};
            \node[draw, fill=lightgray, regular polygon, regular polygon sides=3, scale=0.85, semithick] at (1,3-0.1) {};
            \node[draw, shape=diamond, semithick] at (0,4) {};
            \node[draw, fill=lightgray, shape=diamond, semithick] at (-1,5) {};
            \node[draw, regular polygon, regular polygon sides=3, scale=0.85, semithick] at (-1,6-0.1) {};
            \node[draw, fill=lightgray, regular polygon, regular polygon sides=3, scale=0.85, semithick] at (-1,7-0.1) {};
            
            \node at (4,0) {$1$};
            \node at (3,0) {$2$};
            \node at (3,1) {$2$};
            \node at (3,2) {$2$};
            \node at (3,3) {$2$};
        
            \node at (2,0) {$1$};
            \node at (2,1) {$3$};
            \node at (2,2) {$4$};
            \node at (2,3) {$5$};
            \node at (2,4) {$4$};
            \node at (2,5) {$2$};
            \node at (2,6) {$1$};
        
            \node at (1,1) {$1$};
            \node at (1,2) {$2$};
            \node at (1,3) {$3$};
            \node at (1,4) {$5$};
            \node at (1,5) {$4$};
            \node at (1,6) {$3$};
            \node at (1,7) {$2$};
        
            \node at (0,4) {$1$};
            \node at (0,5) {$3$};
            \node at (0,6) {$4$};
            \node at (0,7) {$5$};
            \node at (0,8) {$4$};
            \node at (0,9) {$2$};
            \node at (0,10) {$1$};
        
            \node at (-1,5) {$1$};
            \node at (-1,6) {$2$};
            \node at (-1,7) {$3$};
            \node at (-1,8) {$4$};
            \node at (-1,9) {$3$};
            \node at (-1,10) {$2$};
            \node at (-1,11) {$1$};
        
            \draw[thick] (2.5,-0.5) -- (4.5,-0.5) -- (4.5,0.5) -- (3.5,0.5) -- (3.5,3.5) -- (2.5,3.5) -- (2.5,4.5) -- (1.5,4.5) -- (1.5,7.5) -- (0.5,7.5) -- (0.5,8.5) -- (-0.5,8.5) -- (-0.5,11.5) -- (-1.5,11.5) -- (-1.5,7.5) -- (-0.5,7.5) -- (-0.5,4.5) -- (0.5,4.5) -- (0.5,3.5) -- (1.5,3.5) -- (1.5,0.5) -- (2.5,0.5) -- (2.5,-0.5);
        \end{tikzpicture}
     \end{subfigure}
     \begin{subtable}[H]{0.6\textwidth}
         \centering
         \caption*{Key}
         \begin{tabular}{|c|c|c|c|c|c|}
            \hline
            $W_1$ & $W_2$ & Family 1 & Family 2 & Family 3 & Family 4
            \\\hline
            \begin{tikzpicture}[scale=0.4]
                \node at (0,0) {$1$};
                \node at (0,1) {$1$};
                \node at (0,2) {$1$};
                \node at (0,3) {$1$};
        
                \draw (0,0) circle (0.5);
            \end{tikzpicture}
            &
            \begin{tikzpicture}[scale=0.4]
                \draw[fill=lightgray] (0,0) circle (0.5);
            
                \node at (0,0) {$1$};
                \node at (0,1) {$1$};
                \node at (0,2) {$1$};
                \node at (0,3) {$1$};
            \end{tikzpicture}
            &
            \begin{tikzpicture}[scale=0.4]
                \node at (0,0) {$1$};
                \node at (0,1) {$1$};
                \node at (0,2) {$1$};
                \node at (0,3) {$1$};
        
                \node at (1,0) {$1$};
                \node at (1,1) {$1$};
                \node at (1,2) {$1$};
                \node at (1,3) {$1$};
        
                \node[draw, shape=diamond, semithick] at (0,0) {};
            \end{tikzpicture}
            &
            \begin{tikzpicture}[scale=0.4]
                \node[draw, fill=lightgray, shape=diamond, semithick] at (0,0) {};
            
                \node at (0,0) {$1$};
                \node at (0,1) {$1$};
                \node at (0,2) {$1$};
                \node at (0,3) {$1$};
        
                \node at (1,0) {$1$};
                \node at (1,1) {$1$};
                \node at (1,2) {$1$};
                \node at (1,3) {$1$};
            \end{tikzpicture}
            &
            \begin{tikzpicture}[scale=0.4]
                \node at (0,0) {$1$};
                \node at (0,1) {$1$};
                \node at (0,2) {$1$};
                \node at (0,3) {$1$};
        
                \node at (1,0) {$1$};
                \node at (1,1) {$1$};
                \node at (1,2) {$1$};
                \node at (1,3) {$1$};
        
                \node[draw, regular polygon, regular polygon sides=3, scale=0.85, semithick] at (0,0-0.1) {};
            \end{tikzpicture}
            &
            \begin{tikzpicture}[scale=0.4]
                \node[draw, fill=lightgray, regular polygon, regular polygon sides=3, scale=0.85, semithick] at (0,0-0.1) {};
            
                \node at (0,0) {$1$};
                \node at (0,1) {$1$};
                \node at (0,2) {$1$};
                \node at (0,3) {$1$};
        
                \node at (1,0) {$1$};
                \node at (1,1) {$1$};
                \node at (1,2) {$1$};
                \node at (1,3) {$1$};
            \end{tikzpicture}
            \\\hline    
        \end{tabular}
     \end{subtable}
\end{figure}

For every homogeneous component of $V_{2k-1}\otimes V$, we can check that the vectors in the shown summands span the vector space. For example, it can be checked that
$$\begin{aligned}
    &\{v_{2k-1,6k-2}\otimes v_{1,2}+v_{2k-1,6k-3}\otimes v_{1,3}+v_{2k-1,6k-4}\otimes v_{1,4},\\
    &\quad v_{2k-1,6k-2}\otimes v_{1,2}+v_{2k-1,6k-3}\otimes v_{1,3},\\
    &\quad v_{2k-1,6k-2}\otimes v_{1,2}+v_{2k-1,6k-4}\otimes v_{1,4}\}
\end{aligned}$$
spans the homogeneous component of degree $(2k,6k)$. We do similar verifications for the other homogeneous components, which is omitted from this proof. This means that (4) follows from (2). By a similar argument, it can be shown that (3) follows from (1). Considering the degrees of the homogeneous components show that (1) and (2) follow from (4) and (3), respectively. By induction, the lemma follows.
\end{proof}

Proposition~\ref{prop:41decomp} follows from this lemma, and as a corollary we observe that Conjecture~\ref{conj:tensorpowers} is satisfied in this case.

\begin{corollary}\label{4-1-poly}
The function $P_V(n)$ is given by $4n+1$, a linear polynomial.
\end{corollary}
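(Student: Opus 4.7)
The plan is to obtain the formula $P_V(n) = 4n+1$ as an immediate arithmetic consequence of Proposition~\ref{prop:41decomp}, using induction on $n$. Since $V$ corresponds to a skew shape with five cells, we have $\dim V = 5$, and the base cases $P_V(0) = 1$ (from $V_0 = \kk$) and $P_V(1) = \dim V = 5$ match the claimed formula $4n+1$.

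For the inductive step, I would take dimensions in the two decompositions from Proposition~\ref{prop:41decomp}. Using $\dim F = 8$ and $\dim W = 4$, these yield the recurrences
\begin{align*}
P_V(2k+1) &= 5\, P_V(2k) - 32k, \\
P_V(2k) &= 5\, P_V(2k-1) - 8 - 8(4k-3).
\end{align*}
Assuming inductively that $P_V(m) = 4m+1$ for all $m < n$, one substitutes $P_V(2k-1) = 8k-3$ into the second recurrence to obtain $P_V(2k) = 5(8k-3) - 8 - 8(4k-3) = 8k+1 = 4(2k)+1$, and substitutes $P_V(2k) = 8k+1$ into the first to obtain $P_V(2k+1) = 5(8k+1) - 32k = 8k+5 = 4(2k+1)+1$. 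This closes the induction and establishes the formula for every $n$.

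The main obstacle has already been overcome in Proposition~\ref{prop:41decomp} and Lemma~\ref{lem:41partition}, where the explicit indecomposable summands of $V_n \otimes V$ and their multiplicities were identified. Once that decomposition is in hand, the polynomial (and hence quasi-polynomial) behavior of $P_V(n)$ is a purely numerical consequence of dimension counting, so the corollary itself requires nothing beyond verifying two base cases and solving the two resulting linear recurrences.
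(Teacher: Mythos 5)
Your proposal is correct: the two recurrences you extract from Proposition~\ref{prop:41decomp} are the right dimension counts ($5\,P_V(2k)=P_V(2k+1)+8\cdot 4k$ and $5\,P_V(2k-1)=P_V(2k)+2\cdot 4+8(4k-3)$), the arithmetic checks out, and the single base case $P_V(1)=5$ suffices to start the induction. The paper reaches the same conclusion even more directly: Lemma~\ref{lem:41partition}(1)--(2) exhibits $V_n$ explicitly as a direct sum of one-dimensional homogeneous components, namely the single component $V_{2n,n}$ together with four components for each of $n$ values of the row index, so $\dim V_n = 1+4n$ by inspection, with no recurrence or induction on $n$ needed at the level of the corollary (the induction lives entirely inside the proof of the lemma). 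Your route buys nothing extra but costs little; it does rely on Proposition~\ref{prop:41decomp} correctly identifying $V_{2k}$ and $V_{2k+1}$ as the unique odd-dimensional summands, which that proposition indeed provides, so there is no gap. One cosmetic point: $P_V(0)$ is not really defined in the paper (tensor powers start at $n=1$), but since your induction only needs $P_V(1)=5$, this is harmless.
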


\subsection{$(4,1)$ Monomial representation using syzygies}\label{subsec:gradedsyzygy}

As in the previous section, let $V$ be the monomial representation given by the partition $(4,1)$ and let $G=\mathbb{Z}/2\mathbb{Z} \times \mathbb{Z}/4\mathbb{Z}$. The monomial diagram of $V$ and the monomial diagram for the free $G$-module of rank 1 are given, respectively:

\begin{center}
\begin{tabular}{ccc}
\begin{tikzpicture}[scale=0.34]
    \draw (0,0) -- (2,0);
    \draw (0,1) -- (2,1);
    \draw (0,2) -- (1,2);
    \draw (0,3) -- (1,3);
    \draw (0,4) -- (1,4);

    \draw (0,0) -- (0,4);
    \draw (1,0) -- (1,4);
    \draw (2,0) -- (2,1);
\end{tikzpicture},   
&
$\qquad$
&
\begin{tikzpicture}[scale=0.34]
    \draw (0,0) -- (2,0);
    \draw (0,1) -- (2,1);
    \draw (0,2) -- (2,2);
    \draw (0,3) -- (2,3);
    \draw (0,4) -- (2,4);

    \draw (0,0) -- (0,4);
    \draw (1,0) -- (1,4);
    \draw (2,0) -- (2,4);
\end{tikzpicture}.
\end{tabular}
\end{center}

Since the free $G$-module of rank $1$ is indecomposable, all projective modules are free modules. The syzygy for $V$ appears as the first module in the following short exact sequence:

\begin{center}
\begin{tabular}{c}
\begin{tikzpicture}[scale=0.34]
    \node at (0,0) {$0$};
    \node at (2,0) {$\to$};
    \draw (4,-1.5) -- (5,-1.5);
    \draw (4,-0.5) -- (5,-0.5);
    \draw (4,0.5) -- (5,0.5);
    \draw (4,1.5) -- (5,1.5);

    \draw (4,-1.5) -- (4,1.5);
    \draw (5,-1.5) -- (5,1.5);
    \node at (7,0) {$\to$};
    \draw (9,-2) -- (11,-2);
    \draw (9,-1) -- (11,-1);
    \draw (9,0) -- (11,0);
    \draw (9,1) -- (11,1);
    \draw (9,2) -- (11,2);

    \draw (9,-2) -- (9,2);
    \draw (10,-2) -- (10,2);
    \draw (11,-2) -- (11,2);
    \node at (13,0) {$\to$};
    \draw (15,-2) -- (17,-2);
    \draw (15,-1) -- (17,-1);
    \draw (15,0) -- (16,0);
    \draw (15,1) -- (16,1);
    \draw (15,2) -- (16,2);

    \draw (15,-2) -- (15,2);
    \draw (16,-2) -- (16,2);
    \draw (17,-2) -- (17,-1);
    \node at (19,0) {$\to$};
    \node at (21,0) {$0.$};
\end{tikzpicture}
\end{tabular}
\end{center}

Define $N = \Omega(V)$. Then $V = \Omega^{-1}(N)$. By Lemma~\ref{lem:syzygytensor}, we have that $V^{\otimes n} \cong (\Omega^{-1}(N))^{\otimes n}$ is isomorphic, up to projective summands, to $\Omega^{-n}(N^{\otimes n})$. By Lemma~\ref{lem:projReven}, all projective representations are even dimensional, so $V_n$, the odd-dimensional summand of $V^{\otimes n}$, is also the odd-dimensional summand of $\Omega^{-n}(N^{\otimes n})$. Thus, to find $V_n$, we must find the unique odd-dimensional indecomposable summand of $N^{\otimes n}$ and take $n$ cosyzygies.

We have that $N$ is the monomial representation given by the partition $(3)$. By Proposition~\ref{prop:symm180tenspow}, the odd-dimensional indecomposable summand of $N^{\otimes \text{odd}}$ is isomorphic to $N$, and the odd-dimensional indecomposable summand of $N^{\otimes \text{even}}$ is isomorphic to $\kk$. Thus computing $V_n$ reduces to computing the unique odd-dimensional summands of $\Omega^{-n}(N)$ for odd $n$ and $\Omega^{-n}(\kk)$ for even $n$, which is a much simpler task.

This technique relies on the fact that $\Omega(V)$ is a simpler representation than $V$. The same technique works for partitions of the form $(2^m, 1)$, where we get $P_V(n) = 2^m x + 1$. However, for the representations considered in Sections~\ref{subsec:graded311rep} and~\ref{subsec:graded42-1rep}, taking syzygies does not simplify the picture, and this technique cannot be applied.

\subsection{$(3,1,1)$ Monomial representation}\label{subsec:graded311rep}

Let $V$ be the monomial representation of $\mathbb{Z}/4\mathbb{Z} \times \mathbb{Z}/4\mathbb{Z}$ (or $\alpha(2,2)$) corresponding to the partition $(3,1,1)$.

The syzygy method in Section~\ref{subsec:gradedsyzygy} does not work for this monomial representation, since $\Omega(V)$ is not easier to analyze than $V$. However, we are still able to obtain an explicit decomposition, via a similar analysis as Section~\ref{subsec:graded41rep}, giving one of the first nontrivial examples where Benson's tensor powers conjecture may be verified.

\begin{proposition}
    We have the following decomposition into indecomposable summands:
    $$V_{2k-1}\otimes V = V_{2k} \oplus \underbrace{W_{12} \oplus \cdots \oplus W_{12}}_{k \text{\normalfont\ copies}} \oplus \underbrace{F \oplus \cdots \oplus F}_{3k-3 \text{\normalfont\ copies}} \oplus \underbrace{W_{28} \oplus \cdots \oplus W_{28}}_{k-1 \text{\normalfont\ copies}},$$
    $$V_{2k} \otimes V = V_{2k+1} \oplus \underbrace{W_{20} \oplus \cdots \oplus W_{20}}_{2k \text{\normalfont\ copies}},$$
    where $\dim W_{20} = 20, \dim W_{12} = 12, \dim W_{28}=28$, and $F$ is a free module of dimension $16$.
\end{proposition}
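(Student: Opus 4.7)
The plan is to mirror the strategy used for the $(4,1)$ case in Section~\ref{subsec:graded41rep}: encode both $V_{2k-1}$ and $V_{2k}$ by an explicit list of one-dimensional homogeneous components, and then construct the claimed indecomposable summands of $V_{2k-1}\otimes V$ and $V_{2k}\otimes V$ explicitly as subrepresentations whose bases are built from elementary tensors of these homogeneous components with the five basis vectors $v_{1,1}, v_{1,2}, v_{1,3}, v_{2,1}, v_{3,1}$ of $V$. Concretely, I would state a stronger lemma with four parts: (1) the positions of the one-dimensional homogeneous pieces of $V_{2k-1}$; (2) the analogous positions for $V_{2k}$; (3) explicit graded descriptions of $V_{2k}$, the $W_{12}$'s, the $F$'s, and the $W_{28}$'s that together form $V_{2k-1}\otimes V$; and (4) explicit graded descriptions of $V_{2k+1}$ and the $W_{20}$'s that together form $V_{2k}\otimes V$. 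This lemma would be proved by induction on $k$, with the base case $k=1$ (giving $V_1=V$ and a small hand-decomposition of $V\otimes V$) checked directly against Magma output.

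For the inductive step, assuming (2) gives the positions of the homogeneous pieces of $V_{2k-1}$, I would write down, for each proposed summand of $V_{2k-1}\otimes V$, its basis as a list of sums of elementary tensors $v_{i,j}\otimes v_{a,b}$, arranged in a graded diagram. I would then verify three things, exactly as in Lemma~\ref{lem:41partition}: first, that each list is closed under the action of $x$ (respectively $y$), by checking that $x$ (respectively $y$) sends each displayed vector to the displayed vector immediately to its right (respectively above) using the comultiplication of Definition~\ref{def:comultiplication} and the known actions on the tensor factors; second, that each summand is indecomposable, by noting that its graded diagram is connected; and third, that the listed vectors together span $V_{2k-1}\otimes V$, which reduces to a per-homogeneous-component rank computation since every degree of $V_{2k-1}\otimes V$ has dimension determined by the convolution of the Hilbert series of $V_{2k-1}$ and $V$. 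A dimension count identical to the one I verified ($\dim V_{2k-1}+12k+16(3k-3)+28(k-1)=100k-75$ in the odd case, $\dim V_{2k}+40k=60k+5$ in the even case) then confirms that the listed summands exhaust the whole module. The $V_{2k}$ summand picked out in this way is odd-dimensional and indecomposable, so by the inductive setup in Section~\ref{subsec:tenspowbasicresults} it is the odd-dimensional indecomposable summand of $V_{2k-1}\otimes V$; tensoring its diagram description with $V$ then feeds directly into the inductive step for $V_{2k}\otimes V$, and symmetrically $V_{2k+1}$ is read off as the odd-dimensional summand there. As in Section~\ref{subsec:graded41rep}, the statements (1) and (2) about $V_{2k-1}$ and $V_{2k}$ themselves follow from (3) and (4) by extracting the odd-dimensional summands, closing the induction.

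The main obstacle is purely combinatorial: discovering the correct basis vectors for $W_{12}$, $W_{20}$, $W_{28}$, and the free summands $F$, and aligning their graded positions so that (a) the $x$- and $y$-actions are internally consistent, (b) the diagrams of the non-free summands really are indecomposable (in particular that $W_{28}$ is connected and not, say, a sum of a $W_{12}$ and a $W_{16}$), and (c) the degree counts match on the nose in every bidegree. In the $(4,1)$ case the graded components of $V_{2k}$ lie on a thin staircase of width one or two, which made the bookkeeping tractable; here the graded components of $V_{2k}$ and $V_{2k-1}$ are wider (reflecting the fact that $V$ has three columns) and the four families of summands $W_{12}$, $W_{20}$, $W_{28}$, $F$ overlap in more bidegrees, so the inductive description has to be discovered and tabulated with care. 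Once the correct tables (analogous to Table~\ref{tab:41part-lem-3} and Table~\ref{tab:41part-lem-4}) are in place, all the verifications are mechanical, the dimension totals balance, and Corollary~\ref{3-1-1-poly} follows immediately from the recursion $\dim V_{2k+1}=\dim V_{2k}+8k=20k+5$ and $\dim V_{2k}=\dim V_{2k-1}+12k=12k+1$.
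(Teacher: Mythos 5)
Your proposal is correct and takes essentially the same route as the paper: the paper likewise proves the proposition via a stronger four-part lemma giving explicit graded diagrams for $V_{2k+1}$, $V_{2k}$, and all summands of $V_{2k-1}\otimes V$ and $V_{2k}\otimes V$, then argues by induction that each displayed family is closed under $x$ and $y$, indecomposable because its diagram is connected, and spanning in each bidegree, exactly as in the $(4,1)$ case of Lemma~\ref{lem:41partition}. One small correction to your closing sentence: the recursions as written are off (for instance $\dim V_{2k}=\dim V_{2k-1}+12k$ cannot hold, since $\dim V_{2k}=12k+1<20k-15=\dim V_{2k-1}$); the correct relations come from the decompositions themselves, e.g. $\dim V_{2k}=5\dim V_{2k-1}-12k-16(3k-3)-28(k-1)$ and $\dim V_{2k+1}=5\dim V_{2k}-40k$, which do yield the values $12k+1$ and $20k+5$ you state and hence Corollary~\ref{3-1-1-poly}.
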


Again, we explicitly write the decomposition with a long but stronger lemma.

\begin{lemma}
We claim the following.
\begin{enumerate}[leftmargin=*]
    \item The representation $V_{2k+1}$ for $k\geq 1$ is given by the graded monomial diagram in Table~\ref{tab:311part-lem-1}.

    \item The representation $V_{2k}$ for $k \geq 1$ is given by the graded monomial diagram in Table~\ref{tab:311part-lem-2}.
    
    \item The representation $V_{2k-1} \otimes V$ decomposes into the following summands: $V_{2k}$ (given in Table~\ref{tab:311part-lem-3-V2k}); dimension $12$ summands (given in Table~\ref{tab:311part-lem-3-dim12}); dimension $16$ summands (given in Table~\ref{tab:311part-lem-3-dim16}); and dimension $28$ summands (given in Table~\ref{tab:311part-lem-3-dim28}).

    \item The representation $V_{2k}\otimes V$ decomposes into the following summands: $V_{2k+1}$ (given in Table~\ref{tab:311part-lem-4-V2k+1}) and dimension $20$ summands (given in Table~\ref{tab:311part-lem-4-dim20}).
\end{enumerate}
\end{lemma}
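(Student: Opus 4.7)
The plan is to proceed by strong induction on $k$, following closely the template established in the proof for the $(4,1)$ representation in Section~\ref{subsec:graded41rep}. The base case $k=1$ can be verified by direct computation of $V_1 \otimes V$ and $V_2 \otimes V$: one writes out the graded monomial diagrams for $V$, $V_2$, $V_3$ explicitly and confirms the claimed decompositions.

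For the inductive step, assume (1) and (2) hold up to the relevant index, and use them to establish (3) and (4). I would take the explicit basis of $V_{2k-1}$ (respectively $V_{2k}$) provided by the inductive hypothesis and form all tensors with the basis $\{v_{i,j}\}$ of $V$, partitioning the resulting tensors by bidegree. The tables in the lemma specify candidate indecomposable summands as explicit linear combinations of these basis tensors. Three checks must then be carried out for each listed summand: (a) closure under the $x$- and $y$-actions, using the Hopf comultiplication $x \mapsto x \otimes 1 + 1 \otimes x$ from Definition~\ref{def:comultiplication}, by computing the image of each displayed vector and confirming it matches the vector in the adjacent cell of its diagram (or is zero); (b) indecomposability, which is immediate from the connectedness of each displayed diagram; and (c) spanning, verified by showing that in each homogeneous component, the prescribed vectors from all candidate summands form a basis.

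Once (3) and (4) are established, statements (1) and (2) follow cleanly: in each case the only odd-dimensional summand on the right-hand side is $V_{2k}$ or $V_{2k+1}$, so its graded structure is read off as exactly those bidegrees of the tensor product not accounted for by the even-dimensional summands. This closes the induction. The observation that $V \otimes V_{n-1}$ contains a unique odd-dimensional summand (equal to $V_n$) follows from Conjecture~\ref{conj:p2div4} as noted in Section~\ref{subsec:tenspowbasicresults}, though in this concrete situation the uniqueness is also visible directly from the dimension count in the claimed decompositions.

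The main obstacle is expected to be the spanning verification (c). Compared with the $(4,1)$ case, the $(3,1,1)$ shape produces a richer family of summand types ($V_{2k}$, $W_{12}$, the free $F$, and $W_{28}$ for odd parity; $V_{2k+1}$ and $W_{20}$ for even parity) and thus more homogeneous bidegrees in which several candidate summands overlap. The bookkeeping to show that the prescribed linear combinations remain linearly independent in every overlapping bidegree — without gaps or redundancies — is the genuinely technical core of the argument, and is the step that most naturally requires the case analysis encoded by the tables. The closure check (a) is routine given the Leibniz-type rule for the action on tensor products, and indecomposability (b) is visual.
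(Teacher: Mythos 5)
Your proposal is correct and follows essentially the same route as the paper: the paper also argues by induction exactly as in the $(4,1)$ case (Lemma~\ref{lem:41partition}), checking that the explicitly tabulated vectors are closed under the $x$- and $y$-actions, that each summand is indecomposable because its diagram is connected, that the summands span each homogeneous component of the tensor product, and then reading off (1) and (2) from the bidegrees appearing in (4) and (3). The only difference is one of emphasis — the paper omits the spanning verification as routine, while you correctly flag it as the technical core — but the structure of the argument is the same.
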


\begin{table}[H]
    \centering
    \caption{The pieces of $V_{2k+1}$. These pieces show the nonzero actions of $x$ and $y$, and they overlap each other. They are arranged as in Figure~\ref{fig:311part-lem-1} for $V_7$.}
    \label{tab:311part-lem-1}
    \scalebox{0.75}{
    \makebox[\textwidth][c]{

\end{figure}

Now, the proof is very similar to the $(4,1)$ partition case, which was Lemma~\ref{lem:41partition}.

\begin{proof}
Similar to the proof of Lemma~\ref{lem:41partition}, the explicit constructions show that the summands are both subrepresentations and indecomposable. The positions of the summands of $V_{2k-1} \otimes V$ are summarized in Figure~\ref{fig:311par-even-location}, and the positions of the summands of $V_{2k}\otimes V$ are summarized in Figure~\ref{fig:311par-odd-location} below. The black outline is the location of $V_i$ within the tensor product. 

It can be checked that these summands span the tensor product. This is very similar to Lemma~\ref{lem:41partition}, and we have omitted the proof.
\end{proof}

 As a corollary, we observe that Conjecture~\ref{conj:tensorpowers} again holds in this case; this time, the function $P_V(n)$ is not polynomial, but is still quasi-polynomial.

\begin{corollary}\label{3-1-1-poly}
The function $P_V(n)$ is a quasi-polynomial with period $2$, given by $10n-5$ for odd $n$ and $6n+1$ for even $n$.
\end{corollary}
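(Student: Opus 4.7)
The plan is to deduce the corollary from the Proposition immediately preceding it by a direct dimension count, together with a short induction.

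First I would establish the base case. The monomial diagram for the partition $(3,1,1)$ has $3+1+1 = 5$ cells, so $\dim V = 5$, and since $V_1 = V$ we obtain $P_V(1) = 5 = 10(1) - 5$, which agrees with the claimed odd-case formula.

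Next I would extract two recurrences from the Proposition by adding dimensions on both sides of each decomposition. From
\begin{equation*}
V_{2k-1} \otimes V \;=\; V_{2k} \,\oplus\, W_{12}^{\oplus k} \,\oplus\, F^{\oplus (3k-3)} \,\oplus\, W_{28}^{\oplus (k-1)}
\end{equation*}
together with $\dim V = 5$, $\dim W_{12}=12$, $\dim F = 16$, $\dim W_{28}=28$, one finds
\begin{equation*}
\dim V_{2k} \;=\; 5\dim V_{2k-1} \,-\, 12k \,-\, 16(3k-3) \,-\, 28(k-1) \;=\; 5\dim V_{2k-1} \,-\, 88k \,+\, 76.
\end{equation*}
Similarly, from $V_{2k} \otimes V = V_{2k+1} \oplus W_{20}^{\oplus 2k}$ and $\dim W_{20}=20$ one gets
\begin{equation*}
\dim V_{2k+1} \;=\; 5\dim V_{2k} \,-\, 40k.
\end{equation*}

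Finally I would verify by induction on $k$ that the claimed formulas $P_V(2k-1) = 20k-15$ and $P_V(2k) = 12k+1$ satisfy both recurrences. Given $\dim V_{2k-1} = 20k-15$, the first recurrence yields $5(20k-15) - 88k + 76 = 100k - 75 - 88k + 76 = 12k + 1$, matching $P_V(2k)$. Then the second recurrence gives $5(12k+1) - 40k = 60k + 5 - 40k = 20k + 5 = 20(k+1) - 15$, matching $P_V(2(k+1)-1)$. This closes the induction, establishing the quasi-polynomial formula with period $2$. No step here is a real obstacle, since the hard work has already been done in the Proposition; the only thing to be careful about is correctly tracking the multiplicities of the non-odd-dimensional summands in the two decompositions.
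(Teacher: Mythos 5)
Your proposal is correct and is essentially the route the paper takes: the corollary is an immediate dimension count from the decomposition established in the proposition (equivalently, from the explicit graded diagrams of $V_{2k}$ and $V_{2k+1}$), which the paper leaves implicit. Your recurrences $\dim V_{2k} = 5\dim V_{2k-1} - 88k + 76$ and $\dim V_{2k+1} = 5\dim V_{2k} - 40k$, together with the base case $P_V(1)=5$, check out and correctly yield $20k-15$ and $12k+1$, i.e.\ $10n-5$ for odd $n$ and $6n+1$ for even $n$.
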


\begin{figure}[H]
    \centering
    \caption{Positions of the indecomposable summands in $V_7 \otimes V$.}
    \label{fig:311par-even-location}
    \begin{subfigure}[H]{0.25\textwidth}
        \centering
        \scalebox{0.9}{
        \begin{tikzpicture}[scale=0.5]
            \foreach \i in {0,2,4} {
                \draw[semithick] (-\i+4-0.5,\i-0.5) rectangle ++(1,1);
                \node[draw, shape=diamond, semithick] at (-\i+5,\i) {};
                \node[draw, fill=lightgray, shape=diamond, semithick] at (-\i+4,1+\i) {};
            }
            
            \node at (4,0) {$1$};
            \node at (5,0) {$2$};
            \node at (6,0) {$4$};
            \node at (7,0) {$5$};
            \node at (8,0) {$5$};
            \node at (9,0) {$3$};
            \node at (10,0) {$1$};
        
            \node at (4,1) {$2$};
            \node at (5,1) {$3$};
            \node at (6,1) {$6$};
            \node at (7,1) {$6$};
            \node at (8,1) {$4$};
            \node at (9,1) {$1$};
           
            \node at (2,2) {$1$};
            \node at (3,2) {$2$};
            \node at (4,2) {$7$};
            \node at (5,2) {$9$};
            \node at (6,2) {$13$};
            \node at (7,2) {$10$};
            \node at (8,2) {$5$};
            \node at (9,2) {$1$};
        
            \node at (2,3) {$2$};
            \node at (3,3) {$3$};
            \node at (4,3) {$9$};
            \node at (5,3) {$10$};
            \node at (6,3) {$11$};
            \node at (7,3) {$5$};
            \node at (8,3) {$1$};
         
            \node at (0,4) {$1$};
            \node at (1,4) {$2$};
            \node at (2,4) {$7$};
            \node at (3,4) {$9$};
            \node at (4,4) {$15$};
            \node at (5,4) {$12$};
            \node at (6,4) {$8$};
            \node at (7,4) {$2$};
        
            \node at (0,5) {$2$};
            \node at (1,5) {$3$};
            \node at (2,5) {$9$};
            \node at (3,5) {$10$};
            \node at (4,5) {$12$};
            \node at (5,5) {$6$};
            \node at (6,5) {$2$};
        
            \node at (0,6) {$4$};
            \node at (1,6) {$6$};
            \node at (2,6) {$13$};
            \node at (3,6) {$11$};
            \node at (4,6) {$8$};
            \node at (5,6) {$2$};
        
            \node at (0,7) {$5$};
            \node at (1,7) {$6$};
            \node at (2,7) {$10$};
            \node at (3,7) {$5$};
            \node at (4,7) {$2$};
        
            \node at (0,8) {$5$};
            \node at (1,8) {$4$};
            \node at (2,8) {$5$};
            \node at (3,8) {$1$};
        
            \node at (0,9) {$3$};
            \node at (1,9) {$1$};
            \node at (2,9) {$1$};
            
            \node at (0,10) {$1$};
        
            \draw[thick] (5.5,-0.5) -- (9.5,-0.5) -- (9.5,0.5) -- (8.5,0.5) -- (8.5,2.5) -- (6.5,2.5) -- (6.5,4.5) -- (4.5,4.5) -- (4.5,6.5) -- (2.5,6.5) -- (2.5,8.5) -- (0.5,8.5) -- (0.5,9.5) -- (-0.5,9.5) -- (-0.5,5.5) -- (1.5,5.5) -- (1.5,3.5) -- (3.5,3.5) -- (3.5,1.5) -- (5.5,1.5) -- (5.5,-0.5);
        
            \foreach \i in {0,2,4,6}
                \draw (6-\i,\i) circle (0.5);
            \foreach \i in {0,2,4}
                \draw[very thick] (6-0.35-\i,0.35+\i) -- (4+0.35-\i,2-0.35+\i);
        
            \foreach \i in {0,...,7}
                \draw (7-\i,\i) circle (0.5);
            \foreach \i in {0,2,4,6} {
                \draw[very thick] (7-\i,\i) -- (6-\i,1+\i);
            }
        \end{tikzpicture}
        }
    \end{subfigure}
    \hfill
    \begin{subtable}[H]{0.65\textwidth}
        \centering
        \caption*{Key}
        \scalebox{0.9}{
        \begin{tabular}{|c|c|c|c|c|}
            \hline
            Dim. $12$ & Family 1 & Family 2 & Family 3 & Dim. $28$
            \\\hline
            \begin{tikzpicture}[scale=0.4]
                \node at (1,0) {$1$};
                \node at (2,0) {$1$};
                \node at (3,0) {$1$};
                \node at (4,0) {$1$};
            
                \node at (0,1) {$1$};
                \node at (1,1) {$1$};
                \node at (2,1) {$1$};
            
                \node at (0,2) {$1$};
                \node at (1,2) {$1$};
                \node at (2,2) {$1$};
            
                \node at (0,3) {$1$};
                \node at (0,4) {$1$};
            
                \draw (1,0) circle (0.5);
                \draw (0,1) circle (0.5);
                \draw[very thick] (1,0) -- (0,1);
            \end{tikzpicture}
            &
            \begin{tikzpicture}[scale=0.4]
                \node at (0,0) {$1$};
                \node at (1,0) {$1$};
                \node at (2,0) {$1$};
                \node at (3,0) {$1$};
                \node at (0,1) {$1$};
                \node at (1,1) {$1$};
                \node at (2,1) {$1$};
                \node at (3,1) {$1$};
                \node at (0,2) {$1$};
                \node at (1,2) {$1$};
                \node at (2,2) {$1$};
                \node at (3,2) {$1$};
                \node at (0,3) {$1$};
                \node at (1,3) {$1$};
                \node at (2,3) {$1$};
                \node at (3,3) {$1$};
                
                \draw[semithick] (0-0.5,0-0.5) rectangle ++(1,1);
            \end{tikzpicture}
            &
            \begin{tikzpicture}[scale=0.4]
                \node at (0,0) {$1$};
                \node at (1,0) {$1$};
                \node at (2,0) {$1$};
                \node at (3,0) {$1$};
                \node at (0,1) {$1$};
                \node at (1,1) {$1$};
                \node at (2,1) {$1$};
                \node at (3,1) {$1$};
                \node at (0,2) {$1$};
                \node at (1,2) {$1$};
                \node at (2,2) {$1$};
                \node at (3,2) {$1$};
                \node at (0,3) {$1$};
                \node at (1,3) {$1$};
                \node at (2,3) {$1$};
                \node at (3,3) {$1$};
            
                \node[draw, shape=diamond, semithick] (0,0) {};
            \end{tikzpicture}
            &
            \begin{tikzpicture}[scale=0.4]
                \node[draw, fill=lightgray, shape=diamond, semithick] (0,0) {};
            
                \node at (0,0) {$1$};
                \node at (1,0) {$1$};
                \node at (2,0) {$1$};
                \node at (3,0) {$1$};
                \node at (0,1) {$1$};
                \node at (1,1) {$1$};
                \node at (2,1) {$1$};
                \node at (3,1) {$1$};
                \node at (0,2) {$1$};
                \node at (1,2) {$1$};
                \node at (2,2) {$1$};
                \node at (3,2) {$1$};
                \node at (0,3) {$1$};
                \node at (1,3) {$1$};
                \node at (2,3) {$1$};
                \node at (3,3) {$1$};
            \end{tikzpicture}
            &
            \begin{tikzpicture}[scale=0.4]
                \node at (2,0) {$1$};
                \node at (3,0) {$1$};
                \node at (4,0) {$1$};
                \node at (5,0) {$1$};
            
                \node at (2,1) {$1$};
                \node at (3,1) {$1$};
                \node at (4,1) {$1$};
                \node at (5,1) {$1$};
            
                \node at (0,2) {$1$};
                \node at (1,2) {$1$};
                \node at (2,2) {$3$};
                \node at (3,2) {$2$};
                \node at (4,2) {$1$};
                \node at (5,2) {$1$};
            
                \node at (0,3) {$1$};
                \node at (1,3) {$1$};
                \node at (2,3) {$2$};
                \node at (3,3) {$1$};
            
                \node at (0,4) {$1$};
                \node at (1,4) {$1$};
                \node at (2,4) {$1$};
            
                \node at (0,5) {$1$};
                \node at (1,5) {$1$};
                \node at (2,5) {$1$};
            
                \draw (2,0) circle (0.5);
                \draw (0,2) circle (0.5);
                \draw[very thick] (2-0.35,0.35) -- (0.35,2-0.35);
            \end{tikzpicture}
            \\\hline
            \end{tabular}
            }
    \end{subtable}
\end{figure}

\begin{figure}[H]
    \centering
    \caption{Positions of the indecomposable summands in $V_8 \otimes V$.}
    \label{fig:311par-odd-location}
    \begin{subfigure}[H]{0.45\textwidth}
        \centering
        \scalebox{0.9}{
        \begin{tikzpicture}[scale=0.5]
            \node at (6,0) {$1$};
            \node at (7,0) {$2$};
            \node at (8,0) {$4$};
            \node at (9,0) {$4$};
            \node at (10,0) {$3$};
            \node at (11,0) {$1$};
            
            \node at (6,1) {$2$};
            \node at (7,1) {$3$};
            \node at (8,1) {$5$};
            \node at (9,1) {$3$};
            \node at (10,1) {$1$};
            
            \node at (4,2) {$1$};
            \node at (5,2) {$2$};
            \node at (6,2) {$7$};
            \node at (7,2) {$8$};
            \node at (8,2) {$9$};    
            \node at (9,2) {$4$};
            \node at (10,2) {$1$};
        
            \node at (4,3) {$2$};
            \node at (5,3) {$3$};
            \node at (6,3) {$8$};
            \node at (7,3) {$6$};
            \node at (8,3) {$4$};
            
            \node at (2,4) {$1$};
            \node at (3,4) {$2$};
            \node at (4,4) {$7$};
            \node at (5,4) {$8$};
            \node at (6,4) {$11$};
            \node at (7,4) {$5$};
            \node at (8,4) {$2$};
            
            \node at (2,5) {$2$};
            \node at (3,5) {$3$};
            \node at (4,5) {$8$};
            \node at (5,5) {$6$};
            \node at (6,5) {$5$};
            
            \node at (0,6) {$1$};
            \node at (1,6) {$2$};
            \node at (2,6) {$7$};
            \node at (3,6) {$8$};
            \node at (4,6) {$11$};
            \node at (5,6) {$5$};
            \node at (6,6) {$2$};
            
            \node at (0,7) {$2$};
            \node at (1,7) {$3$};
            \node at (2,7) {$8$};
            \node at (3,7) {$6$};
            \node at (4,7) {$5$};
            
            \node at (0,8) {$4$};
            \node at (1,8) {$5$};
            \node at (2,8) {$9$};
            \node at (3,8) {$4$};
            \node at (4,8) {$2$};
              
            \node at (0,9) {$4$};
            \node at (1,9) {$3$};
            \node at (2,9) {$4$};
         
            \node at (0,10) {$3$};
            \node at (1,10) {$1$};
            \node at (2,10) {$1$};
            
            \node at (0,11) {$1$};
        
            \draw[thick] (5.5,-0.5) -- (10.5,-0.5) -- (10.5,0.5) -- (9.5,0.5) -- (9.5,2.5) -- (8.5,2.5) -- (8.5,3.5) -- (7.5,3.5) -- (7.5,4.5) -- (6.5,4.5) -- (6.5,5.5) -- (5.5,5.5) -- (5.5,6.5) -- (4.5,6.5) -- (4.5,7.5) -- (3.5,7.5) -- (3.5,8.5) -- (2.5,8.5) -- (2.5,9.5) -- (0.5,9.5) -- (0.5,10.5) -- (-0.5,10.5) -- (-0.5,5.5) -- (1.5,5.5) -- (1.5,3.5) -- (3.5,3.5) -- (3.5,1.5) -- (5.5,1.5) -- (5.5,-0.5);
            
            \foreach \i in {0,...,7}
                \draw (7-\i,\i) circle (0.5);
            \foreach \i in {0,...,4}
                \draw (8-2*\i,2*\i) circle (0.5);
        
            \foreach \i in {0,1,2,3} {
                \draw[very thick] (8-2*\i,2*\i) -- (6-2*\i,1+2*\i);
                \draw[very thick] (7-2*\i,2*\i) -- (6-2*\i,2+2*\i);
            }
        \end{tikzpicture}
        }
    \end{subfigure}
    \hfill
    \begin{subtable}[H]{0.45\textwidth}
        \centering
        \caption*{Key}
        \scalebox{0.9}{
        \begin{tabular}{|c|c|}
            \hline
            Vertical & Horizontal
            \\\hline
            \begin{tikzpicture}[scale=0.4]
                \node at (2,0) {$1$};
                \node at (3,0) {$1$};
                \node at (4,0) {$1$};
                \node at (5,0) {$1$};
            
                \node at (0,1) {$1$};
                \node at (1,1) {$1$};
                \node at (2,1) {$2$};
                \node at (3,1) {$1$};
            
                \node at (0,2) {$1$};
                \node at (1,2) {$1$};
                \node at (2,2) {$2$};
                \node at (3,2) {$1$};
            
                \node at (0,3) {$1$};
                \node at (1,3) {$1$};
                \node at (2,3) {$1$};
            
                \node at (0,4) {$1$};
                \node at (1,4) {$1$};
                \node at (2,4) {$1$};
            
                \draw (2,0) circle (0.5);
                \draw (0,1) circle (0.5);
            
                \draw[very thick] (2,0) -- (0,1);
            \end{tikzpicture}
            &
            \begin{tikzpicture}[scale=0.4]
                \node at (1,0) {$1$};
                \node at (2,0) {$1$};
                \node at (3,0) {$1$};
                \node at (4,0) {$1$};
            
                \node at (1,1) {$1$};
                \node at (2,1) {$1$};
                \node at (3,1) {$1$};
                \node at (4,1) {$1$};
            
                \node at (0,2) {$1$};
                \node at (1,2) {$2$};
                \node at (2,2) {$2$};
                \node at (3,2) {$1$};
                \node at (4,2) {$1$};
            
                \node at (0,3) {$1$};
                \node at (1,3) {$1$};
                \node at (2,3) {$1$};
            
                \node at (0,4) {$1$};
                \node at (0,5) {$1$};
            
                \draw (1,0) circle (0.5);
                \draw (0,2) circle (0.5);
            
                \draw[very thick] (1,0) -- (0,2);
            \end{tikzpicture}
            \\\hline
        \end{tabular}
        }
    \end{subtable}
\end{figure}

\subsection{$(4,2)/(1)$ Monomial representation}\label{subsec:graded42-1rep}

Let $V$ be the monomial representation of $\mathbb{Z}/2\mathbb{Z} \times \mathbb{Z}/4\mathbb{Z}$ (or $\alpha(1,2)$) corresponding to the skew partition $(4,2)/(1)$. This is another example where the syzygy technique outlined in Section \ref{subsec:gradedsyzygy} is not applicable. 

\begin{proposition}
    We have the following decomposition into indecomposable summands:
    $$V_{2k-1}\otimes V = V_{2k} \oplus \underbrace{F\oplus \cdots \oplus F}_{3k-3 \text{\normalfont\ copies}} \oplus \underbrace{W_{12} \oplus \cdots \oplus W_{12}}_{2k-1 \text{\normalfont\ copies}},$$
    $$V_{2k} \otimes V = V_{2k+1} \oplus \underbrace{F\oplus \cdots \oplus F}_{3k \text{\normalfont\ copies}}\oplus \underbrace{W_{12} \oplus \cdots \oplus W_{12}}_{2k \text{\normalfont\ copies}},$$
    where $\dim W_{12} = 12$ and $F$ is a free module of dimension $8$.
\end{proposition}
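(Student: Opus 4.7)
The plan is to mimic the strategy used for the $(4,1)$ case (Lemma~\ref{lem:41partition}) and the $(3,1,1)$ case: proceed by induction on $k$, after first stating a stronger companion lemma that gives explicit formulas, in the $\mathbb{Z}^2$-graded basis, for $V_{2k-1}$ and $V_{2k}$ themselves and for each of their indecomposable summands $V_{2k}$, $V_{2k+1}$, $W_{12}$, and $F$ inside $V_{2k-1}\otimes V$ and $V_{2k}\otimes V$. As a sanity check before starting, the proposed counts are consistent with the conjectured dimensions $\dim V_{2k-1}=12k-7$ and $\dim V_{2k}=12k+1$ (which should also be proved inductively), since
\[
(12k-7)\cdot 5=(12k+1)+3(k-1)\cdot 8+(2k-1)\cdot 12,\qquad (12k+1)\cdot 5=(12k+5)+3k\cdot 8+2k\cdot 12.
\]

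For the inductive step I would, assuming the explicit description of $V_{2k-1}$, tensor its specified basis with the basis $\{v_{1,2},v_{1,3},v_{1,4},v_{2,1},v_{2,2}\}$ of $V$, and then propose explicit bases (typically as homogeneous sums of simple tensors) for the claimed copy of $V_{2k}$, for each of the $2k-1$ copies of $W_{12}$, and for each of the $3(k-1)$ copies of $F$. Each candidate subrepresentation is then verified in the same way as in Section~\ref{subsec:graded41rep} and Section~\ref{subsec:graded311rep}: check directly that $x$ and $y$ act on every chosen basis vector by sending it either to another basis vector of the same candidate or to $0$ (so the span is a subrepresentation), observe that the associated graded diagram is connected (so the summand is indecomposable), and finally argue a degree-by-degree count in the $\mathbb{Z}^2$-grading to show that the sum of the candidates exhausts $V_{2k-1}\otimes V$. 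The same procedure starting from the explicit $V_{2k}$ then yields $V_{2k}\otimes V$, and I would illustrate the placement of all summands by a figure analogous to Figure~\ref{fig:41par-location} and Figure~\ref{fig:311par-even-location}, marking the positions of the bottom-left cells of the $W_{12}$'s and the $F$'s inside the ambient tensor product. The base cases $k=1$ for both identities are handled by a direct computation (e.g.\ in Magma), which should give $V\otimes V=V_2\oplus W_{12}$ and $V_2\otimes V=V_3\oplus 3F\oplus 2W_{12}$.

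The main obstacle, as in Section~\ref{subsec:graded311rep}, is the combinatorial difficulty of discovering the correct explicit formulas for the candidate summands. In particular, since $W_{12}$ is not of the form of a monomial module for any skew shape (its graded structure must be extracted from small-$k$ computational data and then shown to propagate through the induction), writing down the basis vectors for each $W_{12}$ as linear combinations of simple tensors compatible with both the $\mathbb{Z}^2$-grading and the $x,y$ actions is the main technical step. Once these formulas are found, verification reduces to finitely many case checks on the ``shapes'' of the homogeneous components that appear, as in the proof of the preceding lemma.
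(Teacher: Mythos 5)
Your plan is essentially the paper's own proof: the paper proves the proposition via a stronger lemma (Lemma~\ref{lem:421part}) giving explicit graded bases for $V_{2k}$, $V_{2k+1}$, the dimension $8$ free summands, and the dimension $12$ summands inside $V_{2k-1}\otimes V$ and $V_{2k}\otimes V$, then verifies closure under $x$ and $y$, connectedness/indecomposability, and degree-by-degree spanning exactly as in the $(4,1)$ case, with the positions recorded in a figure; your dimension bookkeeping ($\dim V_{2k-1}=12k-7$, $\dim V_{2k}=12k+1$) and base case also match. The only caveat is that, as you acknowledge, the explicit formulas for the summands constitute the entire substance of that lemma, so as written your proposal is a faithful blueprint of the paper's argument rather than a completed proof.
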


As before, we write the decomposition with a stronger lemma.

\begin{lemma}\label{lem:421part}
We claim the following.
\begin{enumerate}[leftmargin=*]
    \item The representation $V_{2k+1}$ for $k \geq 1$ is given by the graded monomial diagram in Table~\ref{tab:421part-lem-1}.

    \item The representation $V_{2k}$ for $k\geq 1$ is given by the graded monomial diagram in Table~\ref{tab:421part-lem-2}.

    \item The representation $V_{2k-1}\otimes V$ decomposes into the following summands: $V_{2k}$ (given in Table~\ref{tab:421part-lem-3-V2k}); dimension $8$ summands (given in Table~\ref{tab:421part-lem-3-dim8}); and dimension $12$ summands (given in Table~\ref{tab:421part-lem-3-dim12}).

    \item The representation $V_{2k}\otimes V$ decomposes into the following summands: $V_{2k+1}$ (given in Table~\ref{tab:421part-lem-4-V2k+1}); dimension $8$ summands (given in Table~\ref{tab:421part-lem-4-dim8}); and dimension $12$ summands (given in Table~\ref{tab:421part-lem-4-dim12}). 
\end{enumerate}
\end{lemma}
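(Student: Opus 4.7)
The plan is to prove all four parts simultaneously by induction on $k$, following the template established in the proof of Lemma~\ref{lem:41partition}. Parts (1) and (3) will be linked: if (2) holds for a given $k$, then the explicit decomposition in (3) will be verified, and then (1) will follow by reading off the $V_{2k}$ summand. Similarly, parts (2) and (4) will be linked: (1) at the current $k$ will yield (4), from which (2) at $k+1$ can be extracted.

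For the base case, I would take $k=1$ and verify (2) directly: $V_2$ should be computable by hand from $V \otimes V$, since $V$ itself is an eight-dimensional representation whose monomial diagram is the skew partition $(4,2)/(1)$. I would then check (4) for $k=1$ by explicitly writing down the tensor product $V_2 \otimes V$ (which is $8 \cdot 8 = 64$-dimensional, breaking into the claimed $V_3$ of dimension $2(3)+1 \cdot 5 = ?$—one computes this from the formula implicit in the proposition—together with $3$ free summands and $2$ dimension-$12$ summands).

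For the inductive step, assume (1) and (2) hold for the current $k$. Starting with the explicit basis of $V_{2k-1}$ provided by (2), I would form the tensor product $V_{2k-1} \otimes V$ by tensoring each basis vector of $V_{2k-1}$ with each of the eight basis vectors $v_{1,2}, v_{1,3}, v_{1,4}, v_{1,5}, v_{2,2}, v_{2,3}, v_{2,4}, v_{2,5}$ of $V$ (labeled according to the skew partition $(4,2)/(1)$). I would then verify, summand by summand in Table~\ref{tab:421part-lem-3-V2k}, Table~\ref{tab:421part-lem-3-dim8}, and Table~\ref{tab:421part-lem-3-dim12}, three things: (a) each claimed summand is closed under the actions of $x$ and $y$ (using the comultiplication $x \mapsto x\otimes 1 + 1 \otimes x$), by checking that the displayed vectors map correctly to neighbors in the diagrams; (b) each diagram is connected, hence indecomposable; (c) the displayed vectors, taken together over all summands, span every homogeneous component of $V_{2k-1} \otimes V$.

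The main obstacle will be step (c), the spanning verification. Because the comultiplication on $x$ and $y$ is non-trivial, each homogeneous component of the tensor product can have dimension up to the product of its contributing homogeneous dimensions, and the same homogeneous basis vectors appear in many of the listed summands. I would organize this by going component by component in the positioning figure (analogous to Figures~\ref{fig:311par-even-location} and~\ref{fig:311par-odd-location}) and checking, at each degree, that the sum of multiplicities taken from $V_{2k}$, the free summands $F$, and the $W_{12}$ summands equals the dimension predicted by counting basis tensors $v_{i,j}^{(2k-1)} \otimes v_{i',j'}^{V}$ of the appropriate total degree. Once (3) is established, part (1) follows by reading off the $V_{2k}$ piece, and then the identical argument applied to $V_{2k} \otimes V$ (using (1) together with the same three verifications) yields (4) and hence (2) for $k+1$. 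The bookkeeping is voluminous but mechanical; the real content is the correct guess of the explicit decomposition, which the tables already supply.
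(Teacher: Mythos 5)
Your strategy is the same one the paper uses: it proves this lemma by the template of Lemma~\ref{lem:41partition} --- check that each displayed family is closed under the actions of $x$ and $y$, that each diagram is connected (hence indecomposable), and that the displayed vectors span every homogeneous component, organized via the position figure, with the four parts feeding one another in an induction on $k$. So the skeleton of your proposal matches the paper's (admittedly terse) argument, and the real content is indeed the explicit tables.

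Two concrete slips need fixing before the verification you describe could actually be carried out. First, $V$ is the monomial module of the skew partition $(4,2)/(1)$, so it is $5$-dimensional with basis $v_{1,2},v_{1,3},v_{1,4},v_{2,1},v_{2,2}$ --- exactly the second tensorands that occur in Tables~\ref{tab:421part-lem-3-V2k}--\ref{tab:421part-lem-4-dim12}; it is not $8$-dimensional, and there are no basis vectors $v_{1,5},v_{2,3},v_{2,4},v_{2,5}$. Accordingly $V_2\otimes V$ has dimension $13\cdot 5=65$, which matches $\dim V_3+3\cdot 8+2\cdot 12=17+24+24$, rather than $64$; running steps (a)--(c) against your eight-vector ``basis'' would make none of the tables check out. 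Second, your dependency chain has parts (1) and (2) interchanged: part (3), the decomposition of $V_{2k-1}\otimes V$, requires the odd-indexed module $V_{2k-1}$, which is supplied by part (1) at index $k-1$ (with $V_1=V$ as the base case), and what you read off from (3) is $V_{2k}$, i.e.\ part (2) at index $k$; then (2) at $k$ is what feeds part (4), from which you read off $V_{2k+1}$, i.e.\ part (1) at $k$, closing the induction. With these corrections your plan coincides with the paper's proof.
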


\begin{table}[H]
    \centering
    \caption{The pieces of $V_{2k+1}$. These pieces show the nonzero actions of $x$ and $y$, and they overlap each other. They are arranged as in Figure~\ref{fig:421part-lem-1} for $V_5$.}
    \label{tab:421part-lem-1}
    \vspace{-2mm}
    \scalebox{0.75}{
    \makebox[\textwidth][c]{

    }
    }
\end{table}


\begin{proof}
Similar to the proof of Lemma~\ref{lem:41partition}, the explicit constructions show that the summands are both subrepresentations and indecomposable. The positions of the summands are summarized in Figure~\ref{fig:42-1-par-location}, where the black outline indicates the position of $V_i$ within the tensor product.

\begin{figure}
    \centering
    \caption{Positions of the indecomposable summands in $V_6 \otimes V$ (left) and $V_7 \otimes V$ (right).}
    \label{fig:42-1-par-location}
    \begin{subfigure}[H]{0.25\textwidth}
        \centering
        \begin{tikzpicture}[scale=0.5]
            \node at (8,0) {$1$};
            
            \node at (7,1) {$2$};
            \node at (8,1) {$2$};
            
            \node at (6,2) {$2$};
            \node at (7,2) {$5$};
            \node at (8,2) {$1$};
        
            \node at (5,3) {$1$};
            \node at (6,3) {$5$};
            \node at (7,3) {$6$};
        
            \node at (5,4) {$2$};
            \node at (6,4) {$8$};
            \node at (7,4) {$4$};
        
            \node at (5,5) {$5$};
            \node at (6,5) {$9$};
            \node at (7,5) {$1$};
        
            \node at (4,6) {$2$};
            \node at (5,6) {$8$};
            \node at (6,6) {$5$};
            
            \node at (3,7) {$1$};
            \node at (4,7) {$5$};
            \node at (5,7) {$8$};
            \node at (6,7) {$1$};
          
            \node at (3,8) {$2$};
            \node at (4,8) {$8$};
            \node at (5,8) {$5$};
            
            \node at (3,9) {$5$};
            \node at (4,9) {$9$};
            \node at (5,9) {$1$};
            
            \node at (2,10) {$2$};
            \node at (3,10) {$8$};
            \node at (4,10) {$5$}; 
        
            \node at (1,11) {$1$};
            \node at (2,11) {$5$};
            \node at (3,11) {$8$};  
            \node at (4,11) {$1$};
          
            \node at (1,12) {$2$};
            \node at (2,12) {$8$};
            \node at (3,12) {$5$};  
          
            \node at (1,13) {$4$};
            \node at (2,13) {$8$};
            \node at (3,13) {$1$};  
            
            \node at (1,14) {$4$};
            \node at (2,14) {$4$};
         
            \node at (1,15) {$3$};  
            \node at (2,15) {$1$};
            
            \node at (1,16) {$1$};
        
            \draw[semithick] (7.5,-0.5) -- (8.5,-0.5) -- (8.5,1.5) -- (7.5,1.5) -- (7.5,4.5) -- (6.5,4.5) -- (6.5,5.5) -- (5.5,5.5) -- (5.5,8.5) -- (4.5,8.5) -- (4.5,9.5) -- (3.5,9.5) -- (3.5,12.5) -- (2.5,12.5) -- (2.5,13.5) -- (1.5,13.5) -- (1.5,15.5) -- (0.5,15.5) -- (0.5,12.5) -- (1.5,12.5) -- (1.5,9.5) -- (2.5,9.5) -- (2.5,8.5) -- (3.5,8.5) -- (3.5,5.5) -- (4.5,5.5) -- (4.5,4.5) -- (5.5,4.5) -- (5.5,1.5) -- (6.5,1.5) -- (6.5,0.5) -- (7.5,0.5) -- (7.5,-0.5);
            
            \foreach \i in {0,2,4} {
                \node[draw, shape=diamond, semithick] at (6-\i,2+2*\i) {};
                \node[draw, shape=diamond, semithick] at (5-\i,3+2*\i) {};
                \node[draw, shape=diamond, semithick] at (5-\i,4+2*\i) {};
            }
        
            \foreach \i in {0,...,6}
                \draw (7-\i,1+2*\i) circle (0.5);
        
            \foreach \i in {0,2,4} {
                \draw[very thick] (7-\i,1+2*\i) -- (6-\i,3+2*\i);
                \draw[very thick, densely dotted] (6-\i,3+2*\i) -- (5-\i,5+2*\i);
            }
        \end{tikzpicture}
    \end{subfigure}
    \hfill
    \begin{subtable}[H]{0.45\textwidth}
        \centering
        \caption*{Key}
        \scalebox{0.9}{
        \begin{tabular}{|c|c|c|c|}
            \hline
            Dim. $8$ & Family 1 & Family 2 & Additional
            \\\hline
            \begin{tikzpicture}[scale=0.45]
                \node at (0,0) {$1$};
                \node at (0,1) {$1$};
                \node at (0,2) {$1$};
                \node at (0,3) {$1$};

                \node at (1,0) {$1$};
                \node at (1,1) {$1$};
                \node at (1,2) {$1$};
                \node at (1,3) {$1$};

                \node[draw, shape=diamond, semithick] at (0,0) {};
            \end{tikzpicture}
            &
            \begin{tikzpicture}[scale=0.45]
                \node at (2,0) {$1$};
                \node at (2,1) {$1$};
            
                \node at (1,0) {$1$};
                \node at (1,1) {$2$};
                \node at (1,2) {$2$};
                \node at (1,3) {$1$};
            
                \node at (0,2) {$1$};
                \node at (0,3) {$1$};
                \node at (0,4) {$1$};
                \node at (0,5) {$1$};

                \draw (1,0) circle (0.5);
                \draw (0,2) circle (0.5);
                \draw[very thick] (1,0) -- (0,2);
            \end{tikzpicture}
            &
            \begin{tikzpicture}[scale=0.45]
                \node at (2,0) {$1$};
                \node at (2,1) {$1$};
            
                \node at (1,0) {$1$};
                \node at (1,1) {$2$};
                \node at (1,2) {$2$};
                \node at (1,3) {$1$};
            
                \node at (0,2) {$1$};
                \node at (0,3) {$1$};
                \node at (0,4) {$1$};
                \node at (0,5) {$1$};

                \draw (1,0) circle (0.5);
                \draw (0,2) circle (0.5);
                \draw[very thick, densely dotted] (1,0) -- (0,2);
            \end{tikzpicture}
            &
            \begin{tikzpicture}[scale=0.45]
                \node at (2,0) {$1$};
                \node at (2,1) {$1$};
            
                \node at (1,0) {$1$};
                \node at (1,1) {$2$};
                \node at (1,2) {$2$};
                \node at (1,3) {$1$};
            
                \node at (0,2) {$1$};
                \node at (0,3) {$1$};
                \node at (0,4) {$1$};
                \node at (0,5) {$1$};

                \node[regular polygon, regular polygon sides=3, draw] at (1,0) {};
                \node[regular polygon, regular polygon sides=3, draw] at (0,2) {};
                \draw[very thick] (1,0) -- (0,2);
            \end{tikzpicture}
            \\\hline
        \end{tabular}
        }
    \end{subtable}
    \hfill
    \begin{subfigure}[H]{0.25\textwidth}
        \begin{tikzpicture}[scale=0.5]
            \node at (8,0) {$1$};
            
            \node at (7,1) {$2$};
            \node at (8,1) {$2$};
            
            \node at (6,2) {$2$};
            \node at (7,2) {$5$};
            \node at (8,2) {$1$};
        
            \node at (5,3) {$1$};
            \node at (6,3) {$5$};
            \node at (7,3) {$6$};
        
            \node at (5,4) {$2$};
            \node at (6,4) {$8$};
            \node at (7,4) {$4$};
        
            \node at (5,5) {$5$};
            \node at (6,5) {$9$};
            \node at (7,5) {$1$};
        
            \node at (4,6) {$2$};
            \node at (5,6) {$8$};
            \node at (6,6) {$5$};
            
            \node at (3,7) {$1$};
            \node at (4,7) {$5$};
            \node at (5,7) {$8$};
            \node at (6,7) {$1$};
          
            \node at (3,8) {$2$};
            \node at (4,8) {$8$};
            \node at (5,8) {$5$};
            
            \node at (3,9) {$5$};
            \node at (4,9) {$9$};
            \node at (5,9) {$1$};
            
            \node at (2,10) {$2$};
            \node at (3,10) {$8$};
            \node at (4,10) {$5$}; 
        
            \node at (1,11) {$1$};
            \node at (2,11) {$5$};
            \node at (3,11) {$8$};  
            \node at (4,11) {$1$};
          
            \node at (1,12) {$2$};
            \node at (2,12) {$8$};
            \node at (3,12) {$5$};  
          
            \node at (1,13) {$5$};
            \node at (2,13) {$9$};
            \node at (3,13) {$1$};  
            
            \node at (0,14) {$1$};
            \node at (1,14) {$7$};
            \node at (2,14) {$5$};  
            
            \node at (0,15) {$2$};
            \node at (1,15) {$6$};
            \node at (2,15) {$1$}; 
        
            \node at (0,16) {$3$};
            \node at (1,16) {$3$};
            
            \node at (0,17) {$2$}; 
        
            \node at (0,18) {$1$}; 
        
            \draw[semithick] (7.5,-0.5) -- (8.5,-0.5) -- (8.5,1.5) -- (7.5,1.5) -- (7.5,4.5) -- (6.5,4.5) -- (6.5,5.5) -- (5.5,5.5) -- (5.5,8.5) -- (4.5,8.5) -- (4.5,9.5) -- (3.5,9.5) -- (3.5,12.5) -- (2.5,12.5) -- (2.5,13.5) -- (1.5,13.5) -- (1.5,16.5) -- (0.5,16.5) -- (0.5,17.5) -- (-0.5,17.5) -- (-0.5,13.5) -- (0.5,13.5) -- (0.5,12.5) -- (1.5,12.5) -- (1.5,9.5) -- (2.5,9.5) -- (2.5,8.5) -- (3.5,8.5) -- (3.5,5.5) -- (4.5,5.5) -- (4.5,4.5) -- (5.5,4.5) -- (5.5,1.5) -- (6.5,1.5) -- (6.5,0.5) -- (7.5,0.5) -- (7.5,-0.5);
            
            \foreach \i in {0,2,4} {
                \node[draw, shape=diamond, semithick] at (6-\i,2+2*\i) {};
                \node[draw, shape=diamond, semithick] at (5-\i,3+2*\i) {};
                \node[draw, shape=diamond, semithick] at (5-\i,4+2*\i) {};
            }
        
            \foreach \i in {0,...,6}
                \draw (7-\i,1+2*\i) circle (0.5);
        
            \foreach \i in {0,2,4} {
                \draw[very thick] (7-\i,1+2*\i) -- (6-\i,3+2*\i);
                \draw[very thick, densely dotted] (6-\i,3+2*\i) -- (5-\i,5+2*\i);
            }

            \node[regular polygon, regular polygon sides=3, draw] at (1,13) {};
            \node[regular polygon, regular polygon sides=3, draw] at (0,15) {};
        
            \draw[very thick] (1,13) -- (0,15);
        \end{tikzpicture}
    \end{subfigure}
\end{figure}

Again, it can be checked that these summands span the respective tensor products. This is very similar to Lemma~\ref{lem:41partition}, and we have omitted the proof.
\end{proof}

This yet again verifies that Conjecture~\ref{conj:tensorpowers} holds for this monomial representation:

\begin{corollary}\label{4-2--1-poly}
    The function $P_V(n)$ is a quasi-polynomial with period $2$, given by $6n-1$ for odd $n$ and $6n+1$ for even $n$.
\end{corollary}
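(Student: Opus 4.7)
The plan is to derive the quasi-polynomial formula directly from the decompositions into indecomposable summands stated in the proposition preceding Lemma~\ref{lem:421part}; this reduces the corollary to a routine dimension count followed by a short induction on $k$.

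First I would record that $\dim V = 5$, since the skew partition $(4,2)/(1)$ has $5$ cells, giving the base case $P_V(1) = 5 = 6(1) - 1$. Setting $a_n := P_V(n)$, and using $\dim F = 8$ together with $\dim W_{12} = 12$, taking dimensions of both sides of the decompositions
\[
V_{2k-1} \otimes V \;=\; V_{2k} \oplus F^{\oplus(3k-3)} \oplus W_{12}^{\oplus(2k-1)}, \qquad V_{2k} \otimes V \;=\; V_{2k+1} \oplus F^{\oplus 3k} \oplus W_{12}^{\oplus 2k}
\]
yields the pair of recurrences
\[
5 a_{2k-1} \;=\; a_{2k} + 48k - 36, \qquad 5 a_{2k} \;=\; a_{2k+1} + 48k.
\]

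I would then induct on $k$ with the ansatz $a_{2k-1} = 12k - 7$ and $a_{2k} = 12k + 1$. Assuming $a_{2k-1} = 12k - 7$, the first recurrence forces $a_{2k} = 5(12k-7) - 48k + 36 = 12k + 1$, and the second then forces $a_{2k+1} = 5(12k+1) - 48k = 12k + 5 = 12(k+1) - 7$, closing the induction. Since $12k - 7 = 6(2k-1) - 1$ and $12k + 1 = 6(2k) + 1$, this is exactly the claimed quasi-polynomial formula.

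The hard part of the argument is not in the corollary itself but entirely absorbed into Lemma~\ref{lem:421part}, whose tables encode the explicit decomposition one invokes here. At the level of the corollary, the only source of potential error is mis-reading the multiplicities $3k-3$ and $2k-1$ (respectively $3k$ and $2k$) of the even-dimensional summands; these are precisely the counts needed to make the right-hand sides of the recurrences equal $48k - 36$ and $48k$, which in turn are what make the linear ansatz $a_n = 6n \pm 1$ consistent.
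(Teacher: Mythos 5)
Your proposal is correct and follows essentially the same route as the paper: the corollary is an immediate consequence of the explicit decomposition established in Lemma~\ref{lem:421part} (and summarized in the preceding proposition), and your dimension count with the recurrences $5a_{2k-1}=a_{2k}+48k-36$, $5a_{2k}=a_{2k+1}+48k$ together with the base case $a_1=5$ checks out. The only cosmetic difference is that you extract the formula by an induction on total dimensions using the multiplicities of the even-dimensional summands, whereas the paper can read $\dim V_n$ off directly from the graded diagrams of $V_{2k}$ and $V_{2k+1}$ given in the lemma; both rest on exactly the same input.
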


We close this section by noting that the computations involved in the theorem above resolve an analogue of a question asked by Benson and Symonds. We first recall some terminology. A $G$-module $M$ is called {\it Omega-algebraic} if there exist finitely many modules $M_1,..., M_m$ such that every non-projective indecomposable summand of $M^{\otimes n}$, for all $n$, is equal to $\Omega^{i}(M_j)$ for some $i \in \mathbb{Z}$ and $j=1,...,m$ (\cite[Definition~14.1]{bensontensorpower}). Omega-algebraic modules, and certain generalizations, have been used in analyzing the dimension of the largest non-projective summand of tensor powers \cite{CHU}. In~\cite{bensontensorpower}, Benson and Symonds posed the following question (and provided evidence that its answer was negative):

\begin{question}
    Must every faithful $G$-module be Omega-algebraic?
\end{question}

Lemma \ref{lem:421part} shows that the analogue of Benson and Symonds' question for graded $\alpha(r,s)$-modules is false.

\begin{lemma}\label{omega-alg}
The $(4,2)/(1)$ monomial representation is a faithful $G$-module which is not Omega-algebraic in the category of graded $\alpha(1,2)$-modules.
\end{lemma}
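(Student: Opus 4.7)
The faithfulness of $V$ is immediate: $x\cdot v_{1,2}=v_{2,2}\neq 0$ shows $g=1+x$ acts with order exactly $2$, and $y^2\cdot v_{1,2}=v_{1,4}\neq 0$ shows $h=1+y$ has order $4$, so the map $G\to\mathrm{GL}(V)$ hits a subgroup of order $8=|G|$ and is therefore injective.

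To prove $V$ is not Omega-algebraic, the plan is to exhibit graded shifts of a fixed ungraded module that appear as summands of tensor powers of $V$ but trace out a $2$-dimensional set in $\mathbb{Z}^2$, more than any finite union of $\Omega$-orbits can accommodate. By Lemma~\ref{lem:421part}(3), for each $(k,i)$ with $1\le i\le k$, the tensor $V_{2k-1}\otimes V$ contains a Dimension $12$ Family $1$ summand whose bottom-left generator cell sits at bigrading $(4k+1-2i,\,2k-3+4i)$. The underlying ungraded indecomposable $N$ is the same for every $(k,i)$; only the graded shift varies. Incrementing $k$ shifts the position by $(4,2)$ and incrementing $i$ shifts it by $(-2,4)$, vectors which are linearly independent in $\mathbb{Z}^2$, so the positions trace out a $2$-dimensional sublattice containing $\Theta(R^2)$ points in any square $[-R,R]^2$. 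Each corresponding graded translate $N[a,b]$ is a nonprojective indecomposable summand of some $V^{\otimes n}$ (since $\dim N=12$ is not a multiple of $\dim\kk\alpha(1,2)=8$), and these translates are pairwise nonisomorphic as graded modules because $H_N(s,t)$ has trivial shift-stabilizer.

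Now suppose for contradiction that $V$ is Omega-algebraic with witnesses $M_1,\dots,M_m$, and set $S_l=\{(a,b)\in\mathbb{Z}^2:N[a,b]\cong\Omega^j(M_l)\text{ for some }j\in\mathbb{Z}\}$. The key estimate is $|S_l\cap[-R,R]^2|=O(R)$. Since $\kk\alpha(1,2)\cong\kk[x,y]/(x^2,y^4)$ is a graded complete intersection of embedding codimension $2$, every module has complexity at most $2$. If $M_l$ has complexity $2$, then $\dim\Omega^j(M_l)=\Theta(|j|)$, which equals $12$ for only finitely many $j$, so $S_l$ is finite. If $M_l$ has complexity $\le 1$, then $\Omega^j(M_l)$ is eventually periodic, and its graded $\Omega$-orbit consists, up to a finite exceptional set, of iterated graded shifts $M_l[kc,kd]$ in a single direction $(c,d)\in\mathbb{Z}^2$; hence $S_l$ is contained in a finite union of arithmetic progressions in that one direction and satisfies $|S_l\cap[-R,R]^2|=O(R)$. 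Summing over $l$ gives $|\bigcup_l S_l\cap[-R,R]^2|=O(R)$, which cannot contain the $\Theta(R^2)$ shifts above, yielding the desired contradiction.

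The main technical obstacle is the complexity-$\le 1$ case, specifically verifying that the graded $\Omega$-orbit of a periodic graded module consists of iterated shifts along a single direction in $\mathbb{Z}^2$. This follows from identifying the graded Heller operator on the stable category as inducing a well-defined $\mathbb{Z}^2$-valued degree shift per period---concretely by comparing minimal generator bidegrees across one period of the minimal graded free resolution of $M_l$---together with the trivial shift-stabilizer of $H_N$ noted above. An alternative, more direct route is to track the graded Hilbert series $H_{\Omega^j(M_l)}(s,t)$ as an explicit $j$-dependent element of $\mathbb{Z}[s^{\pm 1},t^{\pm 1}]$ and count solutions to $s^at^b H_N=H_{\Omega^j(M_l)}$.
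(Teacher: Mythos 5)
Your argument is correct in outline but takes a genuinely different route from the paper. The paper works with the odd-dimensional summands $V_i$: since infinitely many pairwise non-isomorphic $V_i$ occur as summands of tensor powers, Omega-algebraicity would force $V_i\cong\Omega^{-k}(V_j)$ for some $i\neq j$ and $k>0$, and this is excluded by showing that one cosyzygy changes the shape of the rightmost two columns of the graded diagram into a pattern that is invariant under all further cosyzygies and that no $V_i$ exhibits. You instead use the dimension-$12$ summands of Lemma~\ref{lem:421part}, whose graded positions sweep out a rank-two family of shifts ($\Theta(R^2)$ points in a box of side $R$, as you verify with the shift vectors $(4,2)$ and $(-2,4)$), against the fact that a single graded $\Omega$-orbit accounts for only $O(R)$ shifts of a fixed module. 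Your complexity dichotomy over $\kk[x,y]/(x^2,y^4)$ does yield that bound, but the step you flag as the main obstacle (the periodic case) can be streamlined so that neither complexity nor the uniqueness-of-gradings theorem is needed: if $N[a_1,b_1]\cong\Omega^{j_1}(M_l)$ and $N[a_2,b_2]\cong\Omega^{j_2}(M_l)$, then $\Omega^{j_2-j_1}(N)\cong N[a_2-a_1,b_2-b_1]$; since $N[c,d]\cong N[c',d']$ forces $(c,d)=(c',d')$ and $\Omega$ commutes with grading shifts, the set of realizable differences is the image of a subgroup of $\mathbb{Z}$, hence a cyclic subgroup of $\mathbb{Z}^2$, so each $S_l$ lies in a single coset of a rank-$\le 1$ subgroup and $|S_l\cap[-R,R]^2|=O(R)$ directly.

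Two caveats. First, your proof is sensitive to how the graded analogue of ``Omega-algebraic'' is read: all of your dimension-$12$ summands are grading shifts of one ungraded module, so if the witnesses $M_j$ were taken up to grading shift, a single witness would cover them all and the counting argument collapses. The paper's proof, which distinguishes summands of pairwise distinct dimensions by a shift-invariant shape invariant, is immune to this ambiguity and therefore establishes the stronger, shift-robust statement; under the literal transplant of the Benson--Symonds definition (which is what the paper states), your argument suffices, but you should state the convention explicitly. Second, the faithfulness check is incomplete as written: knowing that $g$ acts with order $2$ and $h$ with order $4$ does not preclude $\bar g=\bar h^{2}$ in $\mathrm{GL}(V)$, so the image could a priori have order $4$. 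It is enough to check that no involution of $G$ acts trivially; besides $g$ and $h^2$ you must also check $gh^{2}$, which sends $v_{1,2}$ to $v_{1,2}+v_{2,2}+v_{1,4}\neq v_{1,2}$, so the kernel is trivial.
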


\begin{proof}
Let $V$ be the monomial representation corresponding to the partition $(4,2)/(1)$. Note that $V$ is faithful (as a $G$-module). It suffices to show that if $i \neq j$ and $i,j > 1$, then $V_i$ is not any syzygy $\Omega^k(V_j)$. This is equivalent to showing that if $i \neq j$, then $V_i$ is not any cosyzygy $\Omega^{-k}(V_j)$.
    
Since all projective modules are free modules in this case, the minimal projective cover of $V_j$ is a direct sum of many copies of the free module of rank $1$. By Lemma~\ref{lem:421part}, we know that for positive integer $j$, the graded representation $V_j$ has homogeneous components with degrees $(2j,j), (2j,j+1), (2j-1,j+1),$ and $(2j-1,j+4)$ that are dimension $1$ and homogeneous components with degrees $(2j-1,j+2)$ and $(2j-1,j+3)$ that are dimension $2$. By the same lemma, we know that there are no other homogeneous components with degree $(2j,n)$ for $n \neq j,j+1$ and no homogeneous components with degree $(2j-1,n)$ for $n \neq j+1,\dots, j+4$. Also, we know that there are no other homogeneous components with degree $(m,n)$ with $m > 2j$ or $n < j$. See Figure~\ref{fig:omegaalgebraicA} for the graded diagram.

This means that the minimal projective cover of $V_j$ has copies of the graded free module of rank 1 which has the homogeneous component in its top right corner as degrees $(2j,j+1),(2j-1,j+3),$ and $(2j-1,j+4)$. This implies that the cosyzygy of $V_j$ has homogeneous components with degrees $(2j-1,j-2),$ $(2j,j-2), (2j,j-1), (2j-1,j-1)$ that are dimension $1$ and homogeneous components with degrees $(2j-1,j)$ and $(2j-1,j+1)$ that are dimension $2$. Also, we know that there are no other homogeneous components with degree $(2j,n)$ for some other $n$, no other homogeneous components with degree $(2j-1,n)$ for some other $n$, no homogeneous components with degree $(m,n)$ for $m > 2j$ or $n < j-2$. See Figure~\ref{fig:omegaalgebraicB} for the graded diagram.

\begin{figure}[H]
    \centering
    \caption{The bottom right of the graded diagram of $V_j$ and cosyzygies, displaying the dimensions of the homogeneous components.}
    \begin{subfigure}[t]{0.4\textwidth}
        \centering
        \caption{The rightmost columns of $V_j$.}
        \label{fig:omegaalgebraicA}
        \begin{tikzpicture}[scale=0.5]
            \node at (0,0) {$1$};
            \node at (0,1) {$1$};
            \node at (-1,1) {$1$};
            \node at (-1,2) {$2$};
            \node at (-1,3) {$2$};
            \node at (-1,4) {$1$};
            \node at (-2,5) {$\ddots$};
        \end{tikzpicture}
    \end{subfigure}
    \begin{subfigure}[t]{0.4\textwidth}
        \centering
        \caption{The rightmost columns of $\Omega^{-1}(V_j)$.}
        \label{fig:omegaalgebraicB}
        \begin{tikzpicture}[scale=0.5]
            \node at (0,0) {$1$};
            \node at (1,0) {$1$};
            \node at (0,1) {$1$};
            \node at (1,1) {$1$};
            \node at (0,2) {$2$};
            \node at (0,3) {$2$};
            \node at (-1,4) {$\ddots$};
        \end{tikzpicture}
    \end{subfigure}
\end{figure}

The formation of the rightmost two columns of $\Omega^{-1}(V_j)$ is invariant to cosyzygies. Specifically, if $\Omega^{-k}(V_j)$ has its rightmost two columns with the relative locations and dimensions as in Figure~\ref{fig:omegaalgebraicB}, then let the degrees be $(m,n), (m+1,n), (m,n+1),$ and $(m+1,n+1)$ for the dimension $1$ homogeneous components and $(m,n+2)$ and $(m,n+3)$ for the dimension $2$ homogeneous components. Then, the minimum projective cover has copies of the rank $1$ free module, with the top right corners of these free modules having degrees $(m+1,n+1)$ and $(m,n+3)$, with the latter having multiplicity $2$. Then, the cosyzygy $\Omega^{-k-1}(V_j)$ has homogeneous components with degrees $(m,n-2), (m+1,n-2), (m,n-1),$ and $(m+1,n-1)$ that are dimension $1$ and has homogeneous components with degrees $(m,n)$ and $(m,n+1)$ that are dimension $2$. This is the exact same relative locations and dimensions as the homogeneous components shown in Figure~\ref{fig:omegaalgebraicB}, so this is invariant.

Hence, all $\Omega^{-k}(V_j)$ for $k > 0$ have rightmost columns as in Figure~\ref{fig:omegaalgebraicB}. However, none of the $V_i$ have these rightmost columns by Lemma~\ref{lem:421part}. Therefore, $V_i \not\cong \Omega^{-k}(V_j)$ for $k > 0$, as desired.
\end{proof}

\subsection{$(6,1)$ Monomial representation}

Let $V$ be the monomial representation of $\mathbb{Z}/2\mathbb{Z} \times \mathbb{Z}/8\mathbb{Z}$ (or $\alpha(1,3)$) corresponding to the partition $(6,1)$. The quasi-polynomial is $P_V(n) = 2n^2 + 4n + 1$, a quadratic. We have the following decomposition:

\begin{proposition}
We have the following decomposition into indecomposable summand:
$$V_{2k-1} \otimes V = V_{2k} \oplus \bigoplus_{i=1}^k W^{A}_{16i} \oplus \bigoplus_{i=1}^{k-1} W^{B}_{16i} \oplus (2k^2-k-1)F \oplus (k+1)W_8,$$
$$V_{2k} \otimes V = V_{2k+1} \oplus \bigoplus_{i=1}^{k} W^{A}_{16i+8} \oplus \bigoplus_{i=1}^{k} W^{B}_{16i-8} \oplus (2k^2+k)F \oplus (k)W_8,$$
where $\dim W^{A}_i = i, \dim W^{B}_i = i, \dim W_8 = 8$, and $F$ is a free module of dimension $16$.
\end{proposition}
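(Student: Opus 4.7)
The plan is to follow the same inductive blueprint used in the proofs of Lemma~\ref{lem:41partition} and Lemma~\ref{lem:421part}. Proceeding by strong induction on $k$, I would first postulate explicit ``piece-by-piece'' graded presentations of $V_{2k-1}$ and $V_{2k}$ as $\alpha(1,3)$-modules, labeling the homogeneous basis vectors $v^{*,i}_{a,b}$ (with $A/B$-type superscripts indexing the pieces) so that the actions of the generators $x$ and $y$ have a prescribed description. Using the previously-established recursive shape of $V_{n-1}$, I would then write down explicit bases, as linear combinations of tensor basis elements $v_{a,b}\otimes v_{1,l}$ and $v_{a,b}\otimes v_{2,1}$, for each of the claimed summands of $V_{n-1}\otimes V$: the next odd-dimensional summand $V_n$, the families $W^A_{16i}$ and $W^B_{16i}$ (or $W^A_{16i+8}$ and $W^B_{16i-8}$) indexed by $i$, the dimension-$16$ free summands $F$, and the dimension-$8$ summands $W_8$.

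Once the summands are recorded, three standard checks complete the induction. First, each claimed summand must be shown to be closed under $x$ and $y$, hence a subrepresentation; this is purely mechanical since the actions are already read off from the labeled pieces of $V_{n-1}$. Second, each summand's graded monomial diagram is connected, which gives indecomposability. Third, the listed vectors form a basis of $V_{n-1}\otimes V$, which one verifies bidegree by bidegree: in each homogeneous component the number of displayed vectors must equal $\dim(V_{n-1})_{(a-r,b-s)}\cdot\dim(V)_{(r,s)}$ summed appropriately, and the vectors must be linearly independent. Having done this, the dimension identity $\dim V_n=2n^2+4n+1$ drops out by arithmetic: $\dim(V_{n-1}\otimes V)=7\dim V_{n-1}$ equals the sum $\dim V_n+16\sum i+8m+\ldots$ for the prescribed multiplicities, and telescoping gives $P_V(n)-P_V(n-1)=4n+2$, whence $P_V(n)=2n^2+4n+1$.

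The principal obstacle, which makes this case qualitatively harder than the $(4,1)$, $(3,1,1)$, and $(4,2)/(1)$ cases, is that the \emph{number} of distinct summand families in each tensor power grows linearly in $k$ (not just their multiplicities), and the individual dimensions $16i$ and $16i\pm 8$ grow with $i$. The bookkeeping of basis vectors for $W^A_{16i}$ and $W^B_{16i}$ must therefore be organized so that the $i$-th member of the family in $V_n\otimes V$ is obtained from a shifted copy of its $(i-1)$-st counterpart combined with a uniform ``boundary correction'' produced by interacting the top-right pieces of $V_{n-1}$ with the two columns of $V$. Identifying this recursive template for the $W^A_\bullet$ and $W^B_\bullet$ families, and verifying that its boundary corrections splice cleanly with the $V_n$ and $W_8$ summands in every bidegree, is where the substance of the proof lies; once the template is stated, the closure, indecomposability, and spanning checks proceed uniformly in both indices $i$ and $k$ exactly as in the preceding subsections.
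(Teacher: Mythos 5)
Your plan is exactly the route the paper takes: postulate explicit piece-wise graded presentations of $V_{2k}$ and $V_{2k+1}$, write down explicit bases for $V_n$, the $W^A_\bullet$, $W^B_\bullet$, $F$, and $W_8$ summands inside $V_{n-1}\otimes V$, verify closure under $x,y$, connectedness, and spanning bidegree by bidegree, and conclude the dimension formula by arithmetic — and, like the paper, you isolate the key inductive observation that only the new leftmost columns require fresh spanning checks at each step. The substance you defer (the explicit recursive templates for the growing families) is precisely what the paper supplies in its long tables, so the approach matches.
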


We explicitly write the decomposition with a long but stronger lemma.

\begin{lemma}
We claim the following.
\begin{enumerate}[leftmargin=*]
    \item The representation $V_{2k+1}$ for $k\geq 1$ is given by the graded monomial diagram in Table~\ref{tab:61part-lem-1}.

    \item The representation $V_{2k}$ for $k \geq 1$ is given by the graded monomial diagram in Table~\ref{tab:61part-lem-2}.
    
    \item The representation $V_{2k-1} \otimes V$ decomposes into the following summands: $V_{2k}$ (given in Table~\ref{tab:61part-lem-3-V2k}); summands $W_{i}^{A}$ (given in Table~\ref{tab:61part-lem-3-WA}); summands $W_{i}^{B}$ (given in Table~\ref{tab:61part-lem-3-WB}); dimension $16$ summands (given in Table~\ref{tab:61part-lem-3-16}); and dimension $8$ summands (given in Table~\ref{tab:61part-lem-3-8}).

    \item The representation $V_{2k}\otimes V$ decomposes into the following summands: $V_{2k+1}$ (given in Table~\ref{tab:61part-lem-4-V2k+1}); summands $W_{i}^{A}$ (given in Table~\ref{tab:61part-lem-4-WA}); summands $W_{i}^{B}$ (given in Table~\ref{tab:61part-lem-4-WB}); dimension $16$ summands (given in Table~\ref{tab:61part-lem-4-16}); and dimension $8$ summands (given in Table~\ref{tab:61part-lem-4-8}).
\end{enumerate}
\end{lemma}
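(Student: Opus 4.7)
The plan is to prove all four parts simultaneously by induction on $k$, following the template established in the proofs of Lemma~\ref{lem:41partition} and Lemma~\ref{lem:421part}. The base case $k=1$ reduces to directly verifying part (2) for $V_2$, which can be computed explicitly from $V \otimes V$ (and its decomposition is the $k=1$ instance of part (3)). From that starting point, the induction alternates between the even and odd cases: assuming part (2) for $V_{2k-1}$ (the relevant inductive hypothesis being the $k-1$ instance of part (1)), I would establish part (3), observe that the odd-dimensional indecomposable summand extracted from part (3) yields part (2) for $V_{2k}$, then establish part (4) starting from the description of $V_{2k}$, and finally read off part (1) for $V_{2k+1}$ as the odd-dimensional indecomposable summand appearing in part (4).

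For the core step of verifying any proposed decomposition (parts (3) and (4)), I would proceed exactly as in Lemma~\ref{lem:41partition} and Lemma~\ref{lem:421part}. First, for each explicit vector listed in the tables describing a candidate summand, I would check that $x$ and $y$ send it to the claimed adjacent vector (or to $0$ if no such vector appears in the diagram); this shows each candidate summand is a subrepresentation. Since the chosen basis vectors for each piece live in distinct homogeneous components and the diagrams are connected, indecomposability follows from the connectedness of the associated monomial diagram, exactly as in the previous cases. The identification of $W_{i}^A$, $W_{i}^B$, $W_8$, and the dimension-$16$ free summands with indecomposable modules of the claimed dimensions is immediate from counting cells in the diagrams.

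Next, I would verify that the candidate summands span the whole tensor product. The cleanest way to organize this is by fixing a homogeneous component of $V_{2k-1} \otimes V$ (respectively $V_{2k} \otimes V$) of bidegree $(m,n)$, computing its dimension from the inductive description of $V_{2k-1}$ (resp. $V_{2k}$) and the known structure of $V$, and matching it against the total contribution of the candidate summands to that bidegree. This is the step that carries the bulk of the computational burden; the combinatorics of the $(A,i)$, $(B,i)$, $(C,1)$, $(C,k)$ pieces of $V_{2k}$, $V_{2k+1}$ together with the many families of summands require systematic bookkeeping. Once spanning is established, dimensions match automatically and we obtain a direct sum decomposition, since the sum of dimensions of the summands equals $\dim(V_{2k-1}) \cdot \dim(V) = \dim(V_{2k-1} \otimes V)$ (respectively the analogous identity for $V_{2k} \otimes V$).

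Finally, to close the induction I would identify $V_{2k}$ (resp. $V_{2k+1}$) as the unique odd-dimensional indecomposable summand of the decomposition, appealing to Conjecture~\ref{conj:p2div4} (or rather the fact that all other summands $W_{i}^A, W_{i}^B, W_8, F$ have dimensions divisible by $2$ as shown by the cell counts). This simultaneously verifies parts (1) and (2) at the next level and completes the induction. The main obstacle will be the sheer bookkeeping of the spanning check in the inductive step: the number of summand families grows linearly in $k$ and the intersection pattern of the $(A,i)$ and $(B,i)$ pieces in the diagram of $V_{2k}$ and $V_{2k+1}$ makes the homogeneous-component-by-homogeneous-component dimension count delicate, so I would organize it by first handling the ``generic'' homogeneous components away from the boundary pieces $(C, 1)$, $(C,k)$, $(A,1)$, $(A,k+1)$, $(B,1)$, $(B,k+1)$, and then treating these boundary components separately as a finite case check.
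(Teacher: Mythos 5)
Your proposal follows essentially the same route as the paper: explicit verification that each tabulated piece is closed under the actions of $x$ and $y$ (hence a subrepresentation), indecomposability from connectedness of the diagrams, and an inductive, homogeneous-component-by-component spanning check, after which $V_{2k}$ (resp.\ $V_{2k+1}$) is identified as the unique odd-dimensional indecomposable summand. The only differences are organizational --- the paper shortens the spanning step by observing that the columns of degrees $(i,*)$, $i=2k+2,\dots,4k$, of $V_{2k-1}\otimes V$ repeat, up to an index shift, the corresponding columns of $V_{2k-2}\otimes V$, so only the two leftmost columns need fresh verification, and note that your per-bidegree bookkeeping must be an actual spanning (equivalently, linear-independence) check of the listed vectors in each homogeneous component, not merely a dimension count.
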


\begin{table}[H]
    \centering
    \caption{The pieces of $V_{2k+1}$. These pieces show the nonzero actions of $x$ and $y$, and they overlap with each other. They are arranged as in Figure~\ref{fig:61part-lem-1} for $V_5$.}
    \label{tab:61part-lem-1}
    \scalebox{0.75}{
    \makebox[\textwidth][c]{

    }
    }
\end{table}

The proof is similar to the previous cases.

\begin{proof}
Similar to the proof of Lemma~\ref{lem:41partition}, the explicit constructions show that the summands are both subrepresentations and indecomposable. The positions of the summands are summarized in Figure~\ref{fig:61-par-location}, where the black outline indicates the position of $V_i$ within the tensor product.

We only need to check that the summands span the tensor product. We proceed by induction. The base cases can be manually checked.

Assume the claimed decomposition is true for $V_{2k-2} \otimes V$. Consider the tensor product $V_{2k-1}\otimes V$. The homogeneous components with degrees $(i,*)$ for $i=2k+2, \dots, 4k$ correspond to the homogeneous components in $V_{2k-2}\otimes V$ with degrees $(j,*)$ for $j=2k,\dots,4k-2$, since the summands in item (3) of the lemma correspond to the summands in item (4) of the lemma, with index shifts. This means that we only need to check the components in $V_{2k-1}\otimes V$ with degree $(i,*)$ for $i=2k,2k+1$ (i.e.,~the leftmost two columns). This is very similar to Lemma~\ref{lem:41partition}, except that the columns, from right to left, grow larger (hence the quadratic in Corollary~\ref{3-1-1-poly}). However, using the lemma, it can be seen that the additional homogeneous components that need to be checked for these growing columns are the same computations as previous ones, simplifying the number of cases. With this, in the same way as Lemma~\ref{lem:41partition}, it can be checked that the summands span the tensor product, and we have omitted the proof.
\end{proof}

As a corollary, we observe that Conjecture~\ref{conj:tensorpowers} again holds in this case; this time, the function $P_V(n)$ is a quadratic.

\begin{corollary}\label{6-1-poly}
The function $P_V(n)$ is the quadratic polynomial $2n^2+4n+1$.
\end{corollary}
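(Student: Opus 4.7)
The plan is to derive the formula by extracting $\dim V_n$ from the explicit decompositions in the preceding proposition and verifying the closed form $2n^2+4n+1$ by induction. Since the proposition gives
\[
V_{2k-1}\otimes V = V_{2k} \oplus \bigoplus_{i=1}^{k} W^{A}_{16i} \oplus \bigoplus_{i=1}^{k-1} W^{B}_{16i} \oplus (2k^2-k-1)F \oplus (k+1)W_8
\]
with $\dim F = 16$ and $\dim W_8 = 8$, and since $\dim V = 7$, taking dimensions yields a recursion
\[
\dim V_{2k} = 7\dim V_{2k-1} - \Big[\sum_{i=1}^{k} 16i + \sum_{i=1}^{k-1} 16i + 16(2k^2-k-1) + 8(k+1)\Big].
\]
The bracketed quantity simplifies to $48k^2 - 8k - 8$ using $\sum_{i=1}^m 16i = 8m(m+1)$. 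I would carry out the analogous computation for $V_{2k}\otimes V$, where the bracketed dimension of nontrivial summands simplifies to $48k^2 + 40k$, giving
\[
\dim V_{2k+1} = 7\dim V_{2k} - 48k^2 - 40k.
\]

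Next, I would set up the induction on $k$, with base case $V_1 = V$ giving $P_V(1) = 7 = 2(1)^2 + 4(1) + 1$. Assuming $P_V(2k-1) = 2(2k-1)^2 + 4(2k-1) + 1 = 8k^2 - 1$, the first recursion yields
\[
\dim V_{2k} = 7(8k^2-1) - 48k^2 + 8k + 8 = 8k^2 + 8k + 1 = 2(2k)^2 + 4(2k) + 1,
\]
and applying the second recursion to this gives
\[
\dim V_{2k+1} = 7(8k^2+8k+1) - 48k^2 - 40k = 8k^2 + 16k + 7 = 2(2k+1)^2 + 4(2k+1) + 1,
\]
completing the induction and showing $P_V(n) = 2n^2 + 4n + 1$ for all $n \geq 1$.

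The main step is really just arithmetic bookkeeping, since the structural content (that these are indeed the indecomposable summands) is already contained in the proposition and its underlying lemma. The only place where care is needed is in the geometric-series sums for the $W^A_{16i}$ and $W^B_{16i}$ families, since a miscount of the index range or a sign error in the count of free modules would destroy the quadratic cancellation. Once the two recursions are assembled and the closed-form candidate is plugged in, the verification is a single line of algebra in each parity.
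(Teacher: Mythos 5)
Your proposal is correct and takes essentially the same route as the paper: the corollary is purely a dimension count resting on the structural decomposition of $V_{n-1}\otimes V$, which you extract from the proposition via the two parity recursions $\dim V_{2k}=7\dim V_{2k-1}-(48k^2-8k-8)$ and $\dim V_{2k+1}=7\dim V_{2k}-(48k^2+40k)$ and close by induction from $P_V(1)=7$. The sums and the verification of $8k^2-1$, $8k^2+8k+1$, $8k^2+16k+7$ all check out, so this is a complete (and slightly more explicit) write-up of what the paper leaves as an observation.
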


With this decomposition, we can intuitively see why $P_V(n)$ is a quadratic in this case. In particular, the $(A,i)$ pieces of some $V_{n}$ are each increasing by a constant amount (in this case, by $8$) for each successive tensor power, since we are adding another column to the left of each of the pieces. The number of pieces $(A,i)$ is linear in relation to $n$, so the dimension of the $V_n$ increases by an amount linear to $n$ for each successive tensor power. 

\begin{figure}
    \centering
    \caption{Positions of the indecomposable summands in $V_4 \otimes V$ (left) and $V_5 \otimes V$ (right).}
    \label{fig:61-par-location}
    \begin{subfigure}[H]{0.25\textwidth}
        \centering
        \scalebox{0.9}{
        \begin{tikzpicture}[scale=0.5]
            \node at (5,0) {$1$};
            \node at (4,0) {$2$};
            \node at (3,0) {$1$};
            \node at (4,1) {$2$};
            \node at (3,1) {$3$};
            \node at (2,1) {$1$};
            \node at (4,2) {$2$};
            \node at (3,2) {$4$};
            \node at (2,2) {$2$};
            \node at (4,3) {$2$};
            \node at (3,3) {$5$};
            \node at (2,3) {$3$};
            \node at (4,4) {$2$};
            \node at (3,4) {$6$};
            \node at (2,4) {$4$};
            \node at (4,5) {$2$};
            \node at (3,5) {$8$};
            \node at (2,5) {$7$};
            \node at (1,5) {$1$};
            \node at (3,6) {$7$};
            \node at (2,6) {$10$};
            \node at (1,6) {$3$};
            \node at (3,7) {$5$};
            \node at (2,7) {$10$};
            \node at (1,7) {$6$};
            \node at (0,7) {$1$};
            \node at (3,8) {$4$};
            \node at (2,8) {$10$};
            \node at (1,8) {$8$};
            \node at (0,8) {$2$};
            \node at (3,9) {$2$};
            \node at (2,9) {$9$};
            \node at (1,9) {$11$};
            \node at (0,9) {$4$};
            \node at (3,10) {$1$};
            \node at (2,10) {$8$};
            \node at (1,10) {$13$};
            \node at (0,10) {$6$};
            \node at (2,11) {$6$};
            \node at (1,11) {$15$};
            \node at (0,11) {$9$};
            \node at (2,12) {$3$};
            \node at (1,12) {$14$};
            \node at (0,12) {$12$};
            \node at (2,13) {$1$};
            \node at (1,13) {$11$};
            \node at (0,13) {$13$};
            \node at (1,14) {$9$};
            \node at (0,14) {$14$};
            \node at (1,15) {$6$};
            \node at (0,15) {$13$};
            \node at (1,16) {$4$};
            \node at (0,16) {$12$};
            \node at (1,17) {$1$};
            \node at (0,17) {$9$};
            \node at (0,18) {$6$};
            \node at (0,19) {$4$};
            \node at (0,20) {$2$};
            \node at (0,21) {$1$};

            \draw (3.5,-0.5) -- (5.5,-0.5) -- (5.5,0.5) -- (4.5,0.5) -- (4.5,5.5) -- (3.5,5.5) -- (3.5,8.5) -- (2.5,8.5) -- (2.5,12.5) -- (1.5,12.5) -- (1.5,16.5) -- (0.5,16.5) -- (0.5,20.5) -- (-0.5,20.5) -- (-0.5,10.5) -- (0.5,10.5) -- (0.5,6.5) -- (1.5,6.5) -- (1.5,4.5) -- (2.5,4.5) -- (2.5,0.5) -- (3.5,0.5) -- (3.5,-0.5);

            \draw[semithick] (0-0.5,12-0.5) rectangle node{} ++(1,1);
            \draw[semithick] (0-0.5,14-0.5) rectangle node{} ++(1,1);

            \draw (3,0) circle (0.5);
            \draw (1,6) circle (0.5);

            \node[regular polygon, regular polygon sides=3, draw] at (2,5) {};
            \node[regular polygon, regular polygon sides=3, draw] at (0,11) {};
            
            \node[draw, shape=diamond, semithick] at (2,1) {};
            \node[draw, shape=diamond, semithick] at (2,2) {};
            \node[draw, shape=diamond, semithick] at (2,3) {};

            \node[draw, shape=diamond, semithick] at (1,5) {};
            \node[draw, shape=diamond, semithick] at (1,6) {};

            \node[draw, shape=diamond, semithick] at (0,7) {};
            \node[draw, shape=diamond, semithick] at (0,8) {};
            \node[draw, shape=diamond, minimum size = 0.4cm, semithick] at (0,9) {};
            \node[draw, shape=diamond, minimum size = 0.6cm, semithick] at (0,9) {};
            \node[draw, shape=diamond, semithick] at (0,10) {};
        \end{tikzpicture}
        }
    \end{subfigure}
    \hfill
    \begin{subtable}[H]{0.45\textwidth}
        \centering
        \caption*{Key}
        \scalebox{0.9}{
        \begin{tabular}{|c|c|c|c|}
            \hline
            $W_i^A$ & $W_i^B$ & Dim.~$16$ & Dim.~$8$
            \\\hline
            \begin{tikzpicture}[scale=0.4]
                \node at (2,0) {$1$};
                \node at (2,1) {$1$};
                \node at (2,2) {$1$};
                \node at (2,3) {$1$};
                \node at (2,4) {$1$};
                \node at (2,5) {$1$};

                \node at (1,0) {$1$};
                \node at (1,1) {$1$};
                \node at (1,2) {$1$};
                \node at (1,3) {$1$};
                \node at (1,4) {$2$};
                \node at (1,5) {$2$};
                \node at (1,6) {$1$};
                \node at (1,7) {$1$};

                \node at (0,4) {$1$};
                \node at (0,5) {$1$};
                \node at (0,6) {$1$};
                \node at (0,7) {$1$};
                \node at (0,8) {$1$};
                \node at (0,9) {$1$};
                \node at (0,10) {$1$};
                \node at (0,11) {$1$};

                \node at (-1,11) {$\ddots$};

                \node at (-2,8) {$1$};
                \node at (-2,9) {$1$};
                \node at (-2,10) {$1$};
                \node at (-2,11) {$1$};
                \node at (-2,12) {$1$};
                \node at (-2,13) {$1$};
                \node at (-2,14) {$1$};
                \node at (-2,15) {$1$};

                \draw (1,0) circle (0.5);
            \end{tikzpicture}
            &
            \begin{tikzpicture}[scale=0.4]
                \node at (2,0) {$1$};
                \node at (2,1) {$1$};

                \node at (1,0) {$1$};
                \node at (1,1) {$1$};
                \node at (1,2) {$1$};
                \node at (1,3) {$1$};
                \node at (1,4) {$1$};
                \node at (1,5) {$1$};

                \node at (0,2) {$1$};
                \node at (0,3) {$1$};
                \node at (0,4) {$1$};
                \node at (0,5) {$1$};
                \node at (0,6) {$1$};
                \node at (0,7) {$1$};
                \node at (0,8) {$1$};
                \node at (0,9) {$1$};

                \node at (-1,9) {$\ddots$};

                \node at (-2,6) {$1$};
                \node at (-2,7) {$1$};
                \node at (-2,8) {$1$};
                \node at (-2,9) {$1$};
                \node at (-2,10) {$1$};
                \node at (-2,11) {$1$};
                \node at (-2,12) {$1$};
                \node at (-2,13) {$1$};
                
                \node[regular polygon, regular polygon sides=3, draw] at (1,0) {};
            \end{tikzpicture}
            &
            \begin{tikzpicture}[scale=0.4]
                \node at (0,0) {$1$};
                \node at (0,1) {$1$};
                \node at (0,2) {$1$};
                \node at (0,3) {$1$};
                \node at (0,4) {$1$};
                \node at (0,5) {$1$};
                \node at (0,6) {$1$};
                \node at (0,7) {$1$};
                \node at (1,0) {$1$};
                \node at (1,1) {$1$};
                \node at (1,2) {$1$};
                \node at (1,3) {$1$};
                \node at (1,4) {$1$};
                \node at (1,5) {$1$};
                \node at (1,6) {$1$};
                \node at (1,7) {$1$};
            
                \node[draw, shape=diamond, semithick] (0,0) {};
            \end{tikzpicture}
            &
            \begin{tikzpicture}[scale=0.4]
                \draw[semithick] (-0.5,-0.5) rectangle node{} ++(1,1);
            
                \node at (0,0) {$1$};
                \node at (0,1) {$1$};
                \node at (0,2) {$1$};
                \node at (0,3) {$1$};
                \node at (0,4) {$1$};
                \node at (0,5) {$1$};
                \node at (0,6) {$1$};
                \node at (0,7) {$1$};
            \end{tikzpicture}
            \\\hline
            \end{tabular}
        }
    \end{subtable}
    \hfill
    \begin{subfigure}[H]{0.25\textwidth}
        \centering
        \scalebox{0.9}{
        \begin{tikzpicture}[scale=0.5]
            \node at (5,0) {$1$};
            \node at (4,0) {$2$};
            \node at (3,0) {$1$};
            \node at (4,1) {$2$};
            \node at (3,1) {$3$};
            \node at (2,1) {$1$};
            \node at (4,2) {$2$};
            \node at (3,2) {$4$};
            \node at (2,2) {$2$};
            \node at (4,3) {$2$};
            \node at (3,3) {$5$};
            \node at (2,3) {$3$};
            \node at (4,4) {$2$};
            \node at (3,4) {$6$};
            \node at (2,4) {$4$};
            \node at (4,5) {$2$};
            \node at (3,5) {$8$};
            \node at (2,5) {$7$};
            \node at (1,5) {$1$};
            \node at (3,6) {$7$};
            \node at (2,6) {$10$};
            \node at (1,6) {$3$};
            \node at (3,7) {$5$};
            \node at (2,7) {$10$};
            \node at (1,7) {$6$};
            \node at (3,8) {$4$};
            \node at (2,8) {$10$};
            \node at (1,8) {$8$};
            \node at (3,9) {$2$};
            \node at (2,9) {$9$};
            \node at (1,9) {$11$};
            \node at (3,10) {$1$};
            \node at (2,10) {$8$};
            \node at (1,10) {$13$};
            \node at (2,11) {$6$};
            \node at (1,11) {$15$};
            \node at (2,12) {$3$};
            \node at (1,12) {$14$};
            \node at (2,13) {$1$};
            \node at (1,13) {$11$};
            \node at (1,14) {$9$};
            \node at (1,15) {$6$};
            \node at (1,16) {$4$};
            \node at (1,17) {$1$};
            
            \node at (0,7) {$1$};
            \node at (0,8) {$2$};
            \node at (0,9) {$4$};
            \node at (0,10) {$6$};
            \node at (0,11) {$10$};
            \node at (0,12) {$14$};
            \node at (0,13) {$16$};
            \node at (0,14) {$17$};
            \node at (0,15) {$16$};
            \node at (0,16) {$15$};
            \node at (0,17) {$12$};
            \node at (0,18) {$8$};
            \node at (0,19) {$5$};
            \node at (0,20) {$3$};
            \node at (0,21) {$1$};
            \node at (-1,11) {$1$};
            \node at (-1,12) {$3$};
            \node at (-1,13) {$6$};
            \node at (-1,14) {$9$};
            \node at (-1,15) {$12$};
            \node at (-1,16) {$15$};
            \node at (-1,17) {$17$};
            \node at (-1,18) {$17$};
            \node at (-1,19) {$15$};
            \node at (-1,20) {$13$};
            \node at (-1,21) {$10$};
            \node at (-1,22) {$7$};
            \node at (-1,23) {$4$};
            \node at (-1,24) {$2$};
            \node at (-1,25) {$1$};

            \draw (3.5,-0.5) -- (5.5,-0.5) -- (5.5,0.5) -- (4.5,0.5) -- (4.5,5.5) -- (3.5,5.5) -- (3.5,8.5) -- (2.5,8.5) -- (2.5,12.5) -- (1.5,12.5) -- (1.5,16.5) -- (0.5,16.5) -- (0.5,20.5) -- (-0.5,20.5) -- (-0.5,24.5) -- (-1.5,24.5) -- (-1.5,12.5) -- (-0.5,12.5) -- (-0.5,10.5) -- (0.5,10.5) -- (0.5,6.5) -- (1.5,6.5) -- (1.5,4.5) -- (2.5,4.5) -- (2.5,0.5) -- (3.5,0.5) -- (3.5,-0.5);

            \draw[semithick] (-1-0.5,14-0.5) rectangle node{} ++(1,1);
            \draw[semithick] (-1-0.5,15-0.5) rectangle node{} ++(1,1);
            \draw[semithick] (-1-0.5,16-0.5) rectangle node{} ++(1,1);
            \draw[semithick] (-1-0.5,18-0.5) rectangle node{} ++(1,1);

            \draw (3,0) circle (0.5);
            \draw (1,6) circle (0.5);
            \draw (-1,12) circle (0.5);

            \node[regular polygon, regular polygon sides=3, draw] at (2,5) {};
            \node[regular polygon, regular polygon sides=3, draw] at (0,11) {};
            
            \node[draw, shape=diamond, semithick] at (2,1) {};
            \node[draw, shape=diamond, semithick] at (2,2) {};
            \node[draw, shape=diamond, semithick] at (2,3) {};

            \node[draw, shape=diamond, semithick] at (1,5) {};
            \node[draw, shape=diamond, semithick] at (1,6) {};

            \node[draw, shape=diamond, semithick] at (0,7) {};
            \node[draw, shape=diamond, semithick] at (0,8) {};
            \node[draw, shape=diamond, minimum size = 0.4cm, semithick] at (0,9) {};
            \node[draw, shape=diamond, minimum size = 0.6cm, semithick] at (0,9) {};
            \node[draw, shape=diamond, semithick] at (0,10) {};

            \node[draw, shape=diamond, semithick] at (-1,11) {};
            \node[draw, shape=diamond, semithick] at (-1,12) {};
            \node[draw, shape=diamond, semithick] at (-1,13) {};
            \node[draw, shape=diamond, semithick] at (-1,14) {};
            
        \end{tikzpicture}
        }
    \end{subfigure}
\end{figure}

\section{Further questions and conjectures}\label{sec:otherconjs}

The data on tensor powers computed with Magma prompts the following questions, which we observe hold in many cases:

\begin{question}
If $V$ is an odd-dimensional indecomposable monomial representation, then we have that $(\dim V)^n \equiv P_V(n) \pmod 4$. In particular, $\dim V_{2k} \equiv 1 \pmod 4$ and $\dim V_{2k+1} \equiv \dim V \pmod 4$.
\end{question}

\begin{question}
Let $V$ be an odd-dimensional indecomposable monomial representation. Then, $V^{\otimes n}$ is the direct sum of an odd-dimensional indecomposable representation (with dimension $P_V(n)$) and indecomposable representations with dimensions divisible by $4$.
\end{question}

Table~\ref{tab:tenspowdata} summarizes the monomial modules that we have computational evidence for the polynomial or quasi-polynomial (computational evidence defined by having at least $n+2$ data points for a degree $n$ polynomial guess). The notation $[f(x), g(x)]$ is defined as a quasi-polynomial that is $f(x)$ when $x$ is odd and $g(x)$ when $x$ is even. Note that all quasi-polynomials in the table are either of period $1$ or $2$. 

The monomial representations are not listed in any important order. The ones not listed are either proved previously ($180^\circ$-symmetric; $(2^m,1)$; staircase; $(3,1,1)$; $(4,2)/(1)$; or $(6,1)$ partitions) or have been difficult to compute sufficient data for. Also, note that some monomial representations are $V_i$'s for smaller monomial partitions. For example, the $(5,4,1)/(1)$ monomial representation is $V_2$ for the monomial representation $V := (4,1)$, so if $P_V(n) = 4n+1$ then $P_{V_2}(n) = 8n+1$, as supported by computation in the table below.

\newpage

\begin{longtable}{|c|c|}
\caption{Monomial representations and conjectured polynomials from computational evidence.}
\label{tab:tenspowdata}
\vspace{-3mm}
\\
\hline
Partition & Computed Quasi-polynomial
\\\hline
$(3,2)$ & $[10x-5,6x+1]$
\\\hline
$(5,1,1)$ & $[18x-11,10x+1]$
\\\hline
$(4,3)$ & $[4x+3, 4x+1]$
\\\hline
$(4,1,1,1)$ & $[8x-1,8x+1]$
\\\hline
$(6,2)/(1)$ & $[10x-3,10x+1]$
\\\hline
$(5,3)/(1)$ & $[12x-5,12x-7]$
\\\hline
$(5,2,1)/(1)$ & $6x+1$
\\\hline
$(4,3,1)/(1)$ & $[12x-4,12x+1]$
\\\hline
$(4,2,1,1)/(1)$ & $[20x-13,12x+1]$
\\\hline
$(6,3)/(2)$ & $[4x+3,8x+1]$
\\\hline
$(5,3,1)/(2)$ & $[8x-1,12x+1]$
\\\hline
$(5,2,2)/(1,1)$ & $[10x-3,10x+1]$
\\\hline
$(4,4,1)/(2)$ & $[12x-5,12x-7]$
\\\hline
$(4,3,2)/(2)$ & $[14x-7,10x+1]$
\\\hline
$(4,3,2)/(1,1)$ & $38x-31$
\\\hline
$(5,4,1)/(3)$ & $2x^2+4x+1$
\\\hline
$(5,3,2)/(2,1)$ & $[10x-3,10x+1]$
\\\hline
$(7,1,1)$ & $[20x-11,12x+1]$
\\\hline
$(5,4)$ & $[20x-11,12x+1]$
\\\hline
$(4,4,1)$ & $[12x-3,12x+1]$
\\\hline
$(8,2)/(1)$ & $[12x-3,12x+1]$
\\\hline
$(7,3)/(1)$ & $12x^2-4x+1$
\\\hline
$(7,2,1)/(1)$ & $2x^2+6x+1$
\\\hline
$(6,4)/(1)$ & $48x-39$
\\\hline
$(5,4,1)/(1)$ & $8x+1$
\\\hline
$(5,2,1,1,1)/(1)$ & $8x+1$
\\\hline
$(4,4,1,1)/(1)$ & $[12x-3,12x+1]$
\\\hline
$(7,3,1)/(2)$ & $[18x-9,18x+1]$
\\\hline
$(6,4,1)/(2)$ & $16x-7$
\\\hline
$(6,3,2)/(2)$ & $[14x-5,10x+1]$
\\\hline
$(6,3,1,1)/(2)$ & $[18x-9,14x+1]$
\\\hline
$(5,5,1)/(2)$ & $8x^2+1$
\\\hline
$(5,4,2)/(2)$ & $16x-7$
\\\hline
$(5,4,2)/(1,1)$ & $8x^2+1$
\\\hline
$(5,3,1,1,1)/(2)$ & $[10x-1,18x+1]$
\\\hline
$(8,4)/(3)$ & $[12x-3,12x+1]$
\\\hline
$(7,4,1)/(3)$ & $[8x+1,12x+1]$
\\\hline
$(7,3,2)/(2,1)$ & $[16x-7,16x+1]$
\\\hline
$(6,3,2,1)/(2,1)$ & $8x+1$
\\\hline
$(5,5,2)/(2,1)$ & $8x+1$
\\\hline
$(5,4,3)/(3)$ & $8x+1$
\\\hline
$(4,4,3,1)/(3)$ & $56x-47$
\\\hline
$(7,5,1)/(4)$ & $[12x-3, 16x+1]$
\\\hline
$(7,4,2)/(3,1)$ & $[10x-1,10x+1]$
\\\hline
$(6,4,2,1)/(3,1)$ & $[10x-1,18x+1]$
\\\hline
$(6,3,3,1)/(2,2)$ & $64x-55$
\\\hline
$(5,5,2,1)/(3,1)$ & $[14x-5, 14x-7]$
\\\hline
$(5,5,1,1,1)/(4)$ & $[16x-,16x+1]$
\\\hline
$(5,3,3,2)/(2,2)$ & $[14x-5,10x+1]$
\\\hline
$(7,6,1)/(5)$ & $4x^2+4x+1$
\\\hline
$(7,5,2)/(4,1)$ & $[12x-3,12x+1]$
\\\hline
$(6,5,2,1)/(4,1)$ & $2x^2+6x+1$
\\\hline
$(6,4,3,1)/(3,2)$ & $[12x-3,16x+1]$
\\\hline
$(5,5,3,1)/(4,1)$ & $16x-7$
\\\hline
$(6,4,3,2)/(3,2,1)$ & $[14x-5,14x+1]$
\\\hline
$(6,5,4,1)/(4,3)$ & $2x^2+6x+1$
\\\hline
$(6,5,3,2)/(4,2,1)$ & $[12x-3,12x+1]$
\\\hline
$(5,5,2,2,2)/(4,1,1,1)$ & $4x^2+4x+1$
\\\hline
\end{longtable}

\vspace{-3mm}
\section*{Acknowledgements}\label{sec:acknowledge}

We would like to thank Pavel Etingof for suggesting this project and giving valuable advice along the way. We are also grateful to Dave Benson and Peter Symonds for helpful discussions. This research was conducted while the first author was a participant in the MIT PRIMES-USA program, which we thank for making this research opportunity possible. Research of K. B. Vashaw was partially supported by a National Science Foundation Postdoctoral Fellowship DMS-2103272.

\vspace{-1mm}
\section*{Declarations} \label{sec:decl}
The authors declare that they have no conflict of interest.

\vspace{-1mm}

\end{document}